\newcommand{\beqa}{\begin{eqnarray}}
\newcommand{\eeqa}[1]{\label{#1}\end{eqnarray}}
\newcommand{\beq}{\begin{equation}}
\newcommand{\eeq}[1]{\label{#1}\end{equation}}
\def\R{\mathbb{R}}
\def\tg{{\tau_\gamma}}
\def\o{\omega}
\def\d{\delta}
\def\phi{\varphi}
\def\div{\mathop{\rm div}\nolimits}
\def\demifleche{\rightharpoonup}
\def\*fleche{\buildrel *\over\demifleche}
\def\tol2{\buildrel\hbox{$L^2$}\over\longrightarrow}
\def\toto{\leaders\hbox to 5mm{\hfil.\hfil}\hfill}
\def\div{\mathop{\rm div}\nolimits}
\newcommand{\SO}[1]{\operatorname{SO}(#1)}
\newcommand{\dist}{\operatorname{dist}}
\newcommand{\wtto}{\stackrel{2}\rightharpoonup}
\newcommand{\stto}{\stackrel{2}\rightarrow}
\newcommand{\twconv}{\stackrel{2}\rightharpoonup}
\newcommand{\abs}[1]{\left\vert #1 \right\vert}
\newtheorem{theorem}{Theorem}[section]
\newtheorem{proposition}[theorem]{Proposition}
\newtheorem{definition}[theorem]{Definition}
\newtheorem{lemma}[theorem]{Lemma}
\newtheorem{remark}[theorem]{Remark}
\newcommand{\e}{\varepsilon}
\begin{document}
\title{Bending of thin periodic plates 
}
\author{M. Cherdantsev\footnote{School of Mathematics, Cardiff University, Senghennydd Road, Cardiff, CF24 4AG, United Kingdom},\,\, K. D. Cherednichenko\footnote{Department of Mathematical Sciences, University of Bath, Claverton Down, Bath, BA2 7AY, United Kingdom}}

\maketitle

\begin{abstract}
We show that nonlinearly elastic plates of thickness $h\to 0$ with an $\varepsilon$-periodic structure 
such that $\varepsilon^{-2}h\to 0$ exhibit non-standard 
behaviour in the asymptotic two-dimensional reduction from three-dimensional elasticity: in general, their effective stored-energy density is ``discontinuously anisotropic'' 
in all directions. The proof relies on a new result concerning an additional isometric constraint that deformation fields must satisfy on the microscale.

\end{abstract}

\section{Introduction}

Understanding the behaviour of elastic plates (and, more generally, thin elastic structures) from the rigorous mathematical point of view 
has attracted much attention of applied analysts over the recent years. The related 
activity was initiated by the paper \cite{FJM2002} by Friesecke, James and M\"uller, concerning plate deformations $u$ with finite ``bending energy''. This work appeared alongside the thesis 
\cite{Pantz} by Pantz and was followed by a study of other energy scalings \cite{FJM_hierarchy}. It puts forward the idea that homogeneous plates of thickness $h$, viewed as 
three-dimensional nonlinearly elastic bodies, afford a special compactness argument for sequences of deformation gradients $\nabla u$ with elastic energy of order $O(h^3)$ as $h\to0.$ This argument is based on a new ``rigidity estimate'' \cite{FJM2002} concerning the distance of local values of $\nabla u$ from the group of rotations ${\rm SO}(3).$ The observation that this bounds the distance from a constant rotation field (which depends on $u$) allowed the authors of \cite{FJM2002} to show that the ``limit'' elastic energy functional as $h\to0$ is given by
\begin{equation}
E_{\rm lim}(u):=\frac{1}{12}\int\limits_{\omega} Q_2({\rm II}(u)), \,\, u \in H^2_{\rm iso}(\omega),
\label{FJMfunc}
\end{equation}
where $\omega\subset{\mathbb R}^2$ represents the mid-surface of the undeformed plate, ${\rm II}(u)= (\nabla' u)^\top\nabla'(\partial_1 u \wedge \partial_2 u),$ $\nabla'=(\partial_1,\partial_2),$ is the matrix of the second fundamental form of an isometric surface $u:\omega\to{\mathbb R}^3,$ and $Q_2$ is a quadratic form derived via 
dimension reduction from a quadratic form appearing in the process of linearisation of elastic properties of the material in the small-strain regime.

From the point of view of applications to real-world materials, it seems reasonable to ask in what way the above result is affected by a possible inhomogeneity of material properties of the 
plate in the directions tangential to its mid-surface. One can imagine, for instance, that the plate has a periodic structure of period $\varepsilon>0$ and try to replace the $\varepsilon, h$-dependent family of $h^{-3}$-scaled energy functionals by an ``effective'' functional in the sense of variational convergence \cite{DeGiorgiFranzoni}. Such analysis should reveal, in particular, 
whether the behaviour of the plate really depends on the relative orders of smallness of the parameters $\varepsilon$ and $h.$ Work on this programme was started in the thesis \cite{NeukammPhD2010} by Neukamm, where a series of similar questions was addressed for a periodically inhomogeneous rod, {\it {\it i.e.}} a one-dimensional analogue of a plate. Further, a recent paper \cite{HNV} has investigated the behaviour of periodic plates in the cases when $h\gg\varepsilon$ and $h\sim \e$. This was followed by the work \cite{Velcic}, where the case $\varepsilon^2\ll h\ll\varepsilon$ is addressed. In the present paper we develop an approach (see Sections \ref{compactness}, \ref{moderateregime}) that in our view simplifies the derivation of the corresponding $\Gamma$-limit, via a ``smoothing'' approximation procedure that precedes the two-scale compactness argument, see Section \ref{compactness} of the present work. Smoothing is known to be useful in the asymptotic 
analysis of sequences of solutions to parameter-dependent PDE (see {\it e.g.} \cite{Griso}, \cite{ZhikPast}). In the present work we exploit similar considerations in the asymptotic analysis of bounded-energy sequences, where the energy is represented by an integral functional. Our smoothing approach 
replaces the approximation result \cite[Lemma 3.8]{HNV}, when $h\gg\varepsilon$ or $h\sim\varepsilon,$ and an additional technical statement \cite[Lemma 3.7]{Velcic},
when $\varepsilon^2\ll h\ll\varepsilon,$ by a two-scale compactness theorem for the second gradients of smooth approximations (Theorem \ref{th2.1} below). Remarkably, in 
the situation where the deformation $u_h\in H^1(\Omega)$ has no second derivatives, the smoothing procedure takes us to a setting where the second derivatives exist and are bounded in the $L^2-$ norm, see Lemma \ref{l3.2}. 

The added value of our approach, however, is revealed through its ability to deal with the more difficult case $h\ll\varepsilon^2,$ which has remained open until now.   An additional key observation in handling this case is that the determinant of the matrix of second derivatives of the smoothed approximation can be rewritten in a form amenable to the use of a compensated compactness argument (see proof of Theorem \ref{isometryconstraint}) in order to derive an additional, "fast-scale", isometry constraint on the limit finite-energy deformations, see 
(\ref{constraint}). It also perhaps worth mentioning that the smoothing approach yields a somewhat shorter route to the statement of compactness of finite-energy sequences in the case of homogeneous plates considered in \cite{FJM2002}.
  

Notably, a recent paper \cite{NO} contains an analysis of ``zero-thickness'' plates, where functionals of the ``limit'' form (\ref{FJMfunc}) with explicit $\varepsilon$-periodic $x$-dependence of $Q_2$ are studied in the limit $\varepsilon\to0.$ Our results are consistent with those of \cite{NO}, in the sense of offering an alternative route to the same ``supercritically-thin plate'' limit functional. However, while the authors of  \cite{NO} work with a two-dimensional formulation from the outset, our derivation is in the spirit of ``dimension reduction'' from the full three-dimensional problem. In addition, our approach offers a shorter route to the proofs of \cite{NO}, which is checked directly.

All of the above earlier works follow a traditional methodology of $\Gamma$-convergence relying solely on convergence properties of sequences with bounded energy combined with "structure theorems" 
for combinations of their gradients, without analysing their asymptotic structure as a two-scale series in powers of the parameters $\varepsilon, h.$ Our approach, on the contrary, is guided by an analysis of the original sequence of energy functionals via two-scale power series expansions. (For a detailed discussion of the viewpoint provided by the method of asymptotic expansions, see Appendix B.) For ``supercritical'' scalings $h\ll\varepsilon^2$ it suggests in particular that, to the leading order, the approximating (``exact'' or ``recovery'') sequences have to satisfy an additional  constraint 
\beq
{\rm det}({\rm II}+\nabla^2_y \psi)=0,
\eeq{constraint}
where $\psi=\psi(x',y),$ $x'\in\omega,$ $y\in Y:=[0,1)^2,$ is a term that appears in the two-scale limit of the original sequence of deformations and is responsible for the behaviour of the plate on $\e$-scale. One key aspect that facilitates this approach is that the limit stored-energy function $Q_2$ is quadratic with respect to the second fundamental form of the surface, which makes the analysis of $\varepsilon, h$-dependent plate energies somewhat amenable to a perturbation technique.  

The observation that the space of deformations with finite bending energy acquires the constraint (\ref{constraint}), combined with a characterisation of isometric embeddings by Pakzad \cite{Pakzad} and the earlier ansatz of \cite{FJM2002}, see (\ref{5.120}), allows us to carry out a rigorous proof of variational convergence. Thanks to the two-scale asymptotic structure, the presence of the second gradient of the corrector $\psi$ in the two-scale limit of deformation sequences provides a natural construction for the recovery sequences, {\it cf.} (\ref{4.84}) for the moderate regime. The supercritical regime $h\ll\varepsilon^2$ is studied in Section \ref{extremeenergy}, which contains our main result, Theorem \ref{th5.6}: the limit energy functional is given by 
\[
E^{\rm sc}_{\rm hom}=\frac{1}{12}\int\limits_\omega Q^{\rm sc}_{\rm hom}\bigl({\rm II}(u)\bigr),
\]
where 
\[
Q^{\rm sc}_{\rm hom}=\min\int\limits_Y Q_2(y, {\rm II}+\nabla_y^2\psi)dy,\ \ \ \ \psi\in H^2(Y)\ {\rm periodic,\ subject\ to\ (\ref{constraint})}. 
\]
The isometry constraint (\ref{constraint}) implies, in particular, that $Q^{\rm sc}({\rm II})$ is ``discontinuously anisotropic in all directions of bending" as a function of the macroscopic deformation gradient 
$\nabla u,$ see Theorem \ref{th2.8} and the subsequent discussion. To the best of our knowledge, this is a new phenomenon for nonlinearly elastic plates. Finally, we remark that the analysis of the ``critical'' scaling $h\sim\varepsilon^2$ is currently open.

Throughout the text ${\mathcal I}_d$ denotes the  $d\times d$ identity matrix, $d=2,3.$ Unless indicated otherwise, we denote by $C$ a positive constant whose precise value is of no importance and may vary between different formulae. We use the notation $\partial_i$, $i=1,2,3$, for the partial derivative with respect to $x_i$, $\partial_{y_i}$, $i=1,2$, for the partial derivative with respect to $y_i$, and $\nabla_y$ for the gradient with respect to $y$.



\section{Setting of the problem}

Let $\omega$ be a bounded convex \footnote{The convexity of $\o$ is a technical requirement which allows us to use the results describing the properties of isometric immersions for the supercritical case $h\ll\e^2$. From the point of view of the aim of the present work this requirement is insignificant, since the limiting effective behaviour of an elastic plate is a local property.} domain in $\mathbb R^2$, and let $\Omega_h:= \omega\times(-\frac{h}{2}, \frac{h}{2}) $ be a reference configuration of an undeformed elastic plate, where $h$ is a small positive parameter, $0<h\ll 1$. A deformation $u$ of the plate $\Omega_h$ is an $H^1$-function from $\Omega_h$ into $\mathbb R^3$. We assume that the material properties of the plate $\Omega_h$ vary over $\omega$ with a period $\e$, which goes to zero simultaneously with $h$. For a plate made of a hyperelastic material (see {\it e.g.} \cite{Ciarlet}), the elastic energy of a deformation $u$ is given by the functional
\begin{equation*}
 \int\limits_{\Omega_h} W(\e^{-1}x', \nabla u) dx,
\end{equation*}
where $x':=(x_1,x_2) \in \omega$. The stored-energy density function $W(y, F)$ is assumed to be measurable and periodic with respect to $y\in Y:=[0,1)^2$ and to satisfy the standard conditions of nonlinear elasticity (see {\it e.g.} \cite{Ciarlet}):

A) The density $W$ is frame-indifferent, \textit{{\it i.e.}}  $W(y,RF)=W(F)$ for any $R\in
  {\rm SO}(3)$ and any $F\in\R^{3{\times}3}$, where ${\rm SO}(3)$ is a group of rotation matrices in $\R^{3{\times}3}$.

B) The identity deformation $u(x)=x$ is a natural state, \textit{{\it i.e.}} $W(y,{\mathcal I}_3)=\min_F W(y,F)=0$.

C) 
There exists a constant $C>0$ such that for all $y\in Y,$ $F\in\R^{3{\times}3}$ he inequality
  \begin{equation}\label{2.2}
    W(y,F) \geq C\,{\rm dist}^2(F,{\rm SO}(3))
  \end{equation}
holds.
  
Additionally we assume that ({\it cf.} \cite{CCN}, \cite{FJM2002}, \cite{HNV})

${\rm D}$) The density $W,$ as a function of the deformation gradient, admits a quadratic expansion at the identity, \textit{i.e.} there exists a non-negative quadratic form $Q_3$ on  $\R^{3{\times}3}$ such that
  \begin{equation}\label{WTaylor}
    W(y,{\mathcal I}_3+G) = Q_3(y,G) + r(y,G)
  \end{equation}
  for all $G\in\R^{3{\times}3},$ where 
\begin{equation*}
  r(y,G)=o(\abs{G}^2)\ \ {\rm as}\ \ \vert G\vert\to 0 \mbox{ uniformly with respect to } y\in Y.
  \end{equation*}

We next rescale the transverse variable $x_3$ in order to work on a fixed domain. Namely, we multiply the transverse variable by $h^{-1}$ so that the new variable belongs to the interval $I:=(-\frac{1}{2}, \frac{1}{2})$. We reassign the notation $x_3$ to the new variable (this should not cause any misunderstanding since we will always deal only with the rescaled domain in the future), so $x\in \Omega:= \omega\times I$. We also use the rescaled gradient
\begin{equation*}
 \nabla_h:= \left(\nabla',\, \frac{1}{h}\frac{\partial}{\partial x_3}\right):=\left(\frac{\partial}{\partial x_1},\,\frac{\partial}{\partial x_3},\, \frac{1}{h}\frac{\partial}{\partial x_3}\right),
\end{equation*}
so that the rescaled functional is given by
\begin{equation*}
E_h(u):= \int\limits_{\Omega} W(\e^{-1}x', \nabla_h u) dx,
\end{equation*}
where $u\in H^1(\Omega)$. We consider sequences of deformations $u_h$ in the bending regime, that is such that 
\begin{equation}\label{2.5}
 E_h(u_h) \leq C h^2.
\end{equation}
(In the non-rescaled setting this corresponds to the energies of order $h^3$.) In this paper we focus on the case when $h \ll  \e$. We consider $\e$ as a function of $h$, {\it {\it i.e.}} $h$ is our ``main'' parameter and $\e$ is the ``dependent'' parameter. 

The non-degeneracy property (\ref{2.2}) and the assumption (\ref{2.5}) imply that
\begin{equation}\label{2.7}
 \int\limits_{\Omega} \dist^2\bigl(\nabla_h u_h, \SO{3}\bigr)dx \leq C h^2.
\end{equation}
In what follows, we say that a sequence satisfying (\ref{2.7}) has finite bending energy.

\section{Two-scale compactness and second gradients}
\label{compactness}

The two cases when $\e$ is of order $h$ and $h \gg  \e$ were considered in \cite{HNV}. In these regimes the behaviour of the mid-surface of the plate is essentially macroscopic and the microscale only plays a role away from the mid-surface. In our case the microscopic behaviour of the mid-surface of the plate is very important. This is due to the fact that for each $\e$-cell the corresponding piece of the plate itself behaves like a miniature plate. This ``micro-plate'' behaviour becomes even more dominant when $h\ll \e^2$. In order to study this property we employ the method of two-scale convergence. The structure of the limit functional in the case of a homogeneous plate, namely, the fact that it is basically defined on the second gradient of the limit surface, suggests that the information about the microscopic behaviour should also appear in the form of a second gradient. The problem is that the original functional is defined only on $H^1$-functions, and how one would obtain the second derivatives with respect to the fast variable $y$ in the limit is not clear at the outset. However, while this is not possible in general, in the bending regime deformations possess an additional property which in a certain sense is equivalent to the existence of the second gradient, which we explain in detail in what follows.

Let us recall some results from \cite{FJM2002}. We denote 
\[
Y_{a,h}:= a + h Y, \,\, a \in h \mathbb Z^2,
\ \ \ \ \omega_h:= \bigcup_{Y_{a,h}\subset \omega}Y_{a,h}.
\]

\begin{theorem}\label{th3.1}
 Suppose that a sequence $u_h$ from $H^1(\Omega)$ has finite bending energy. Then: 
 \begin{enumerate}
  \item Up to a subsequence 
 \begin{equation*}
  \nabla_h u_h \to (\nabla' u,\,n) \mbox{ strongly in } L^2(\Omega),
 \end{equation*}
where $u=u(x')$ belongs to the space $H^2_{\rm iso}(\omega)$ of isometric immersions, {\it {\it i.e.}} such maps $u:\,\omega \to \mathbb R^3$ that $(\nabla' u)^\top \nabla' u = {\mathcal I}_2$, and $n:= u_{,1} \wedge u_{,2}$;
\item There exists a piecewise constant map $R_h : \, \omega_h \to \SO{3}$ such that
\begin{equation*}
 \int\limits_{\omega_h\times I} \left| \nabla_h u_h - R_h\right|^2 dx \leq C h^2;
\end{equation*}
\item The difference between the values of $R_h$ in each pair of neighbouring $h$-cells is small in the sense that
\begin{equation*}
  \int\limits_{\omega_h} \max_{\zeta \in \Upsilon_{x'}}\big| R_h(x'+\zeta) - R_h(x')\big|^2 dx' \leq C h^2,
\end{equation*}
where $\Upsilon_{x'}$ consists of those $\zeta\in\{-h,0,h\}^2$ for which $Y_{a+\zeta,h} \subset \omega_h$ for the value $a \in h \mathbb Z^2$ such that $x' \in Y_{a,h}$.
 \end{enumerate}
\end{theorem}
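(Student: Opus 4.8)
The proof will transplant the geometric-rigidity argument of \cite{FJM2002} to the present setting, the only new point being that it is run cell-by-cell on the $h$-periodic grid rather than on a single fixed domain. The basic tool is the rigidity estimate of \cite{FJM2002}: for a bounded Lipschitz domain $U\subset\mathbb R^3$ there is a constant $C(U)$, \emph{unchanged under dilations of $U$}, such that every $v\in H^1(U;\mathbb R^3)$ admits a constant $R\in\SO{3}$ with $\|\nabla v-R\|_{L^2(U)}\le C(U)\|\dist(\nabla v,\SO{3})\|_{L^2(U)}$. To obtain item~2, fix $a\in h\mathbb Z^2$ with $Y_{a,h}\subset\omega$ and undo the transverse scaling by setting $z=(x_1,x_2,hx_3)$; then $\nabla_h u_h$, read as a function of $z$ on the cube $Y_{a,h}\times(hI)$ of side $h$, is an honest gradient, and the factors $h$ produced by the change of variables cancel on the two sides of the rigidity inequality. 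Since $C(U)$ is dilation-invariant this yields a rotation $R_h^a\in\SO{3}$ with $\int_{Y_{a,h}\times I}|\nabla_h u_h-R_h^a|^2\le C\int_{Y_{a,h}\times I}\dist^2(\nabla_h u_h,\SO{3})$, with $C$ independent of $a$ and $h$. Summing over the cells contained in $\omega$ and invoking (\ref{2.7}) gives item~2 with $R_h:=R_h^a$ on $Y_{a,h}$.

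For item~3 I would apply the same estimate on the enlarged, still $O(h)$-sized box $B_{a,\zeta}:=\operatorname{int}(\overline{Y_{a,h}}\cup\overline{Y_{a+\zeta,h}})$ for each admissible $\zeta$, producing a rotation $\bar R$; the triangle inequality gives $|R_h^a-\bar R|^2|Y_{a,h}\times I|\le 2\int_{Y_{a,h}\times I}|\nabla_h u_h-R_h^a|^2+2\int_{B_{a,\zeta}\times I}|\nabla_h u_h-\bar R|^2$, and likewise with $a+\zeta$ in place of $a$, so that $|R_h^a-R_h^{a+\zeta}|^2\le Ch^{-2}\int_{B_{a,\zeta}\times I}\dist^2(\nabla_h u_h,\SO{3})$. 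Because every $B_{a,\zeta}$ overlaps only boundedly many others, multiplying by the cell area $h^2$ and summing over $a$ and over the finitely many $\zeta\in\{-h,0,h\}^2$, then using (\ref{2.7}), produces item~3.

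Item~1 then follows by the FJM compactness scheme. Mollifying $R_h$ at scale $h$ gives a field $\widehat R_h\in H^1$ on an interior enlargement of $\omega_h$; item~3 is precisely a discrete Dirichlet bound, so $\|\nabla'\widehat R_h\|_{L^2}\le C$, while the cellwise Poincar\'e inequality together with item~2 gives $\|\widehat R_h-\nabla_h u_h\|_{L^2}\le Ch$ and hence $\|\dist(\widehat R_h,\SO{3})\|_{L^2}\le Ch$. After a routine extension across the $O(h)$ boundary layer $\omega\setminus\omega_h$, the family $\widehat R_h$ is bounded in $H^1(\omega;\mathbb R^{3\times3})$, so along a subsequence $\widehat R_h\to R$ strongly in $L^2(\omega)$ with $R\in H^1(\omega;\SO{3})$. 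Consequently $\nabla_h u_h\to R$ strongly in $L^2(\Omega)$; since $h^{-1}\partial_3 u_h$ stays bounded, $u_h$ converges, modulo a constant, weakly in $H^1$ to some $u=u(x')$, and comparing $\nabla'u_h$ with the first two columns of $\nabla_h u_h$ gives $\nabla'u=(Re_1\,|\,Re_2)$, whence $(\nabla'u)^\top\nabla'u=\mathcal I_2$ and $u\in H^2_{\rm iso}(\omega)$ (its $H^2$-regularity inherited from $R\in H^1$), while the remaining column satisfies $Re_3=Re_1\wedge Re_2=u_{,1}\wedge u_{,2}=n$. This identifies the strong limit as $(\nabla'u,n)$.

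The part requiring care, rather than a genuinely new idea, is the uniformity in $h$ of every constant appearing above: it rests entirely on the dilation invariance of the rigidity constant of \cite{FJM2002}, on the bounded overlap of the enlarged pair-boxes $B_{a,\zeta}$, and on correctly tracking the anisotropic rescaled gradient $\nabla_h$ through the transverse change of variables. The treatment of the $O(h)$ boundary layer and of the mollification near $\partial\omega_h$, needed to upgrade the cellwise information to a bona fide $H^1(\omega)$-bound for $\widehat R_h$, is standard but should not be skipped.
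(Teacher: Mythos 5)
Your proposal is correct and follows essentially the same route as the paper, which in fact gives no proof of Theorem \ref{th3.1} at all: it is stated as a recollection of results from \cite{FJM2002}, and your cell-by-cell rigidity argument with a dilation-invariant constant, the pairwise difference estimate, and the mollification-based compactness step are precisely the scheme of that reference. The one step needing repair is the diagonal case of item 3: when both components of $\zeta$ are nonzero, the set $B_{a,\zeta}=\operatorname{int}\bigl(\overline{Y_{a,h}}\cup\overline{Y_{a+\zeta,h}}\bigr)$ is a disconnected ``bowtie'' on which the rigidity estimate fails (two different rotations on the two components give zero right-hand side), so instead apply the estimate on the convex hull of the two cells -- a dilated copy of a fixed Lipschitz hexagon, contained in $\omega$ by convexity of $\omega$ -- or chain through an edge-neighbouring cell when it lies in $\omega_h$, and then proceed exactly as you do.
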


Apart from the strong compactness of the gradients $\nabla_h u_h$, which is true in our setting as well as for homogeneous plates, this theorem implies that oscillations of the gradients on the $h$-scale are bounded. The third property in Theorem \ref{th3.1} implies that the difference between values of $R_h$ in neighbouring $h$-cells divided by $h$ (which basically is the difference quotient) is bounded on average and a similar statement can be formulated for $\nabla_h u_h$ due to the second part of the theorem. Hence one can try to mollify $u_h,$ expecting that the second gradient of the mollification will be bounded. In what follows we always assume that $u_h$ is a sequence with finite bending energy.

Let $\varphi \in C^\infty_0(B_1),$ $\varphi\geq 0,$ be radially symmetric, where $B_1$ is a unit disc 
centred at the origin, $\int_{B_1} \varphi = 1$, and for each $h>0$ denote by $\varphi_h(x'):= h^{-2}\varphi(h^{-1}x'),$ $x'\in{\mathbb R}^2,$ the corresponding mollifier. Let $\omega'$ be a domain whose closure is contained in $\omega$. For each sufficiently small $h>0,$ consider the function $\tilde{u}_h=\widetilde{u}_h(x')$, $x'\in \omega'$, defined as the result of the simultaneous mollification of $u_h$ with $\varphi_h$ and averaging with respect to the variable $x_3$:
\begin{equation}\label{2.28}
 \widetilde{u}_h(x') : = \int\limits_I\int\limits_{\mathbb R^2} \varphi_h(\xi-x')u_h(\xi,x_3)\,d\xi dx_3 = \int\limits_I\int\limits_{\mathbb R^2} \varphi_h(\xi)u_h(\xi+x',x_3)\,d\xi dx_3, \ \ \ x'\in\omega.
\end{equation}
(The above integral is well defined in $\omega'$ for sufficiently small $h$ since in this case the support of $\varphi_h(\xi-x')$ is contained in $\omega.$) It is well known that the mollification $\widetilde{u}_h$ is infinitely smooth. We will need to estimate in $H^1$ the difference between $\widetilde{u}_h$ and the average of $u_h$ with respect to $x_3$,
\begin{equation*}
 \overline{u}_h(x'):=\int\limits_I u_h(x)dx_3,\ \ \ x'\in\omega',
\end{equation*}
as well as its derivatives up to the second-order. Such estimates are obtained in the next statement.

\begin{theorem}\label{l3.2}Let $\omega'$ be a domain whose closure is contained in $\omega$. The mollification (\ref{2.28}) satisfies the following inequalities with a constant $C>0$ independent of $\omega'$ and $h$:
\begin{equation}\label{3.17}
\|\widetilde{u}_h - \overline{u}_h\|_{H^1(\omega')}\leq C h,
\end{equation}
\begin{equation*}
  \|\nabla'^{\,2}\widetilde{u}_h\|_{L^2(\omega')} \leq C,
\end{equation*}
\begin{equation}\label{2.40}
\|\nabla'\widetilde u_h\|_{L^\infty(\omega')} \leq C.
\end{equation}
\end{theorem}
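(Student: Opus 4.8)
The three estimates all follow from combining the $h$-cell rigidity information in Theorem~\ref{th3.1} with standard properties of the mollifier $\varphi_h$. The key quantitative input is that, after subtracting the piecewise-constant rotation field $R_h$, the rescaled gradient $\nabla_h u_h$ is within $O(h)$ of $R_h$ in $L^2(\omega_h\times I)$ (part~2), and that $R_h$ itself varies by $O(h)$ between neighbouring $h$-cells in the averaged sense of part~3 — i.e.\ the ``difference quotient'' of $R_h$ on the $h$-lattice is bounded in $L^2$, uniformly in $h$. Morally, this says $\nabla_h u_h$ behaves like an $H^1$-bounded field at the $h$-scale, so its mollification on scale $h$ should have second derivatives bounded in $L^2$ and gradient bounded in $L^\infty$.

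\smallskip
\noindent\emph{Step 1 (the $H^1$ estimate \eqref{3.17}).} I would write $\widetilde u_h(x')-\overline u_h(x')=\int_I\int_{{\mathbb R}^2}\varphi_h(\xi)\bigl(u_h(\xi+x',x_3)-u_h(x',x_3)\bigr)\,d\xi dx_3$. By the fundamental theorem of calculus along the segment from $x'$ to $\xi+x'$ and the Cauchy--Schwarz inequality (using $|\xi|\le h$ on $\mathrm{supp}\,\varphi_h$), one controls the $L^2(\omega')$ norm of this difference by $C h\,\|\nabla' u_h\|_{L^2}\le Ch$, the last bound coming from part~1 (strong $L^2$-convergence of $\nabla_h u_h$, hence boundedness). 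For the gradient part, $\nabla'\widetilde u_h-\nabla'\overline u_h$ is the mollification of $\nabla' u_h$ minus $\nabla' u_h$ (the $x_3$-averaging and $\nabla'$ commute), so the same translation-difference argument applied to $\nabla_h u_h$ gives $Ch\,\|\nabla'^2 u_h\|$ — but $u_h$ has no second derivative, so here one instead uses part~2: replace $\nabla_h u_h$ by $R_h$ up to an $L^2$-error $O(h)$, and then the mollification-minus-identity of the piecewise-constant field $R_h$ is controlled by the $h$-difference-quotient bound of part~3, again yielding $O(h)$. (The vertical component of $\nabla_h u_h$ does not enter $\widetilde u_h$, which depends only on $x'$, so only the tangential block of $R_h$ is needed.)

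\smallskip
\noindent\emph{Step 2 (the second-gradient estimate).} Since $\varphi_h$ is smooth, $\nabla'^2\widetilde u_h(x')=\int_I\int_{{\mathbb R}^2}(\nabla'^2\varphi_h)(\xi)\,u_h(\xi+x',x_3)\,d\xi dx_3$; subtracting a constant (the mollifier integrates $\nabla'^2\varphi_h$ to zero) we may insert $u_h(\xi+x',x_3)-u_h(x',x_3)$, and then integrate by parts once in $\xi$ to move one derivative off $\varphi_h$ onto $u_h$, landing on $\int (\nabla'\varphi_h)(\xi)\cdot\nabla' u_h(\xi+x',x_3)$. Now subtract the constant again ($\int\nabla'\varphi_h=0$) to get $\int(\nabla'\varphi_h)(\xi)\bigl(\nabla' u_h(\xi+x',x_3)-(\text{something})\bigr)$; the natural ``something'' is $R_h$ evaluated at $x'$'s cell. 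Using $\|\nabla'\varphi_h\|_{L^1}\le Ch^{-1}$ together with part~2 ($\nabla_h u_h\approx R_h$ up to $O(h)$) and part~3 (the cell-to-cell oscillation of $R_h$ is $O(h)$ on average), the $h^{-1}$ is absorbed and one obtains $\|\nabla'^2\widetilde u_h\|_{L^2(\omega')}\le C$, uniformly in $h$ and in $\omega'$. Making this bookkeeping of error terms (subsets of $\omega$ near the boundary where $\omega_h$ is not all of $\omega$, cells partially outside $\omega$, etc.) uniform in $\omega'$ is the technical heart; one uses that the support of the translate stays inside $\omega$ for $h$ small.

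\smallskip
\noindent\emph{Step 3 (the $L^\infty$ gradient bound \eqref{2.40}).} Here $\nabla'\widetilde u_h(x')=\int_I\int_{{\mathbb R}^2}\varphi_h(\xi)\,\nabla' u_h(\xi+x',x_3)\,d\xi dx_3$; replacing $\nabla' u_h$ by $R_h$ up to an $L^2$-error does not directly give $L^\infty$, so instead I would bound pointwise: $|\nabla'\widetilde u_h(x')|\le \int\varphi_h(\xi)\,\fint_I|\nabla_h u_h(\xi+x',x_3)|\,d\xi dx_3$. Since $|R_h|\le C$ (rotations are bounded) and $\nabla_h u_h$ is close to $R_h$ in $L^2$ over each cell, the average of $|\nabla_h u_h|$ over the $h$-neighbourhood of $x'$ (which is essentially $\mathrm{supp}\,\varphi_h(\cdot-x')$, a union of $O(1)$ cells) is $\le C + C(\fint|\nabla_h u_h-R_h|^2)^{1/2}=C+O(h)$; here $\varphi_h$ itself is bounded by $Ch^{-2}$ but is integrated against a region of measure $O(h^2)$, so the constant is absorbed. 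This gives $\|\nabla'\widetilde u_h\|_{L^\infty(\omega')}\le C$.

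\smallskip
\noindent The main obstacle I anticipate is not any single estimate but the \emph{uniformity in $\omega'$}: all cell-based quantities ($\omega_h$, the lattice of $Y_{a,h}$) are attached to $\omega$, yet the mollification is only defined on $\omega'\Subset\omega$, and near $\partial\omega$ the covering by full $h$-cells degrades. Handling the boundary layer of width $O(h)$ — where $R_h$ may not be defined in the adjacent cell needed by part~3 — requires care, and is presumably the reason the statement restricts to $\omega'$ while claiming $C$ independent of $\omega'$: one proves the bound on each $\omega'$ with a constant depending only on $\omega$, using that for $h$ small enough (depending on $\mathrm{dist}(\omega',\partial\omega)$) every cell meeting $\mathrm{supp}\,\varphi_h(\cdot-x')$ for $x'\in\omega'$ lies well inside $\omega_h$. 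A secondary subtlety is the repeated ``subtract a constant before using $\int\nabla^k\varphi_h=0$'' trick in Step~2: one must choose the subtracted constant cell by cell (the local value of $R_h$) rather than globally, and then sum the squared errors over cells using part~3 — this is where compactness part~2 and part~3 are used in tandem rather than separately.
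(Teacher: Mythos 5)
Your proposal is correct and follows essentially the same route as the paper: each bound is reduced, via the generalised Minkowski inequality and $h$-scale translation differences, to parts 2 and 3 of Theorem \ref{th3.1} (closeness of $\nabla'\overline{u}_h$ to the piecewise-constant rotation field $\widehat{R}_h$, plus the $O(h)$ cell-to-cell variation of $R_h$), which absorb the factor $h^{-1}$ coming from $\|\nabla'\varphi_h\|_{L^1}$. The only deviations are cosmetic: in the second-gradient step the paper exploits the radial symmetry of $\varphi$ (oddness of $\partial_i\varphi_h$) to reduce to the symmetric difference $\nabla'\overline{u}_h(x'+\xi)-\nabla'\overline{u}_h(x'-\xi)$ rather than subtracting the local value of $\widehat{R}_h$ via $\int\nabla'\varphi_h=0$ (both then rest on the same estimates as in (\ref{2.79})), and in your $L^\infty$ step the averaged correction $\bigl(|B_h|^{-1}\int_{B_h\times I}|\nabla_h u_h-R_h|^2\bigr)^{1/2}$ is only $O(1)$, not $O(h)$, which still suffices for (\ref{2.40}).
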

\begin{proof}
In the subsequent estimates we use a generalised Minkowski inequality (also sometimes referred to as the Minkowski integral inequality):
\begin{equation*}
\Bigg[\,\int\limits_{\mathcal Y} \Bigg|\int\limits_{\mathcal X} F(x,y) d\mu_{\mathcal X}(x)\Bigg|^p d\mu_{\mathcal Y}(y)\Bigg]^{1/p} \leq \int\limits_{\mathcal X}\Bigg[\,\int\limits_{\mathcal Y} \Bigg| F(x,y) \Bigg|^p d\mu_{\mathcal Y}(y)\Bigg]^{1/p}d\mu_{\mathcal X}(x),
\end{equation*}
where $\mathcal X$ and $\mathcal Y$ are measure spaces with measures $\mu_{\mathcal X}(x)$ and $\mu_{\mathcal Y}(y)$ respectively, $F:\,\mathcal X\times \mathcal Y \to \mathbb R$ is measurable and $1<p<\infty$, see {\it e.g.} \cite{HLP}. We obtain the $L^2$-estimate in (\ref{3.17}) as follows:
\begin{equation*}
\begin{gathered}
  \|\widetilde{u}_h - \overline{u}_h\|_{L^2(\omega')}= \Big(\int\limits_{\omega'}\Big(\int\limits_{\mathbb R^2} \varphi_h(\xi)\bigl(\overline{u}_h(\xi+x')-\overline{u}_h(x')\bigr)\,d\xi\Big)^2 dx'\Big)^{1/2}
  \\
  \leq \int\limits_{\mathbb R^2} \Big(\int\limits_{\omega'}\Big(\varphi_h(\xi)\bigl(\overline{u}_h(\xi+x')-\overline{u}_h(x')\bigr)\Big)^2 dx'\Big)^{1/2}\,d\xi \leq \max_{|\xi|\leq h}\big\|\overline{u}_h(\xi+x')-\overline{u}_h(x')\big\|_{L^2(\omega')}
  \\
  =\max_{|\xi|\leq h}\Big\|\int_0^1 \nabla' \overline{u}_h(t\xi+x')\cdot \xi \,dt\Big\|_{L^2(\omega')} \leq \max_{|\xi|\leq h}|\xi|\int_0^1\bigl\| \nabla' \overline{u}_h(t\xi+x')\bigr\|_{L^2(\omega')}\,dt 
  \\ 
  \leq C h\bigl\| \nabla' \overline{u}_h(x')\bigr\|_{L^2(\omega)}\leq C h.
\end{gathered}
\end{equation*}
Now we estimate the difference of the gradients of $\tilde{u}_h$ and $\overline{u}_h$. First notice that by the properties of mollification, $\nabla' \widetilde{u}_h$ equals to the mollification of $\nabla' \overline{u}_h$. Second, one has
\begin{equation}
\label{addeq}
 \|\nabla' \overline{u}_h - \widehat{R}_h\|_{L^2(\omega')} \leq \|\nabla' {u}_h - \widehat{R}_h\|_{L^2(\Omega')} \leq Ch,
\end{equation}
where $\widehat{R}_h$ stands for the first two columns of the matrix $R_h$ and $\Omega' := \omega' \times I$. The bound (\ref{addeq}) is a simple consequence of application of the generalised Minkowski inequality and the H\"{o}lder inequality. Further, the following estimate holds:
\begin{equation}
\label{2.79}
\begin{gathered}
 \|\nabla'\widetilde{u}_h - \nabla'\overline{u}_h\|_{L^2(\omega')} = \Big\|\int\limits_{\mathbb R^2} \varphi_h(\xi)\bigl(\nabla'\overline{u}_h(\xi+x')- \nabla'\overline{u}_h(x')\bigr)\,d\xi\Big\|_{L^2(\omega')}
 \\
 \leq \int\limits_{\mathbb R^2}\Big\| \varphi_h(\xi)\bigl(\nabla'\overline{u}_h(\xi+x')- \nabla'\overline{u}_h(x')\bigr)\Big\|_{L^2(\omega')}\,d\xi \leq \max_{|\xi|\leq h}\bigl\|\nabla'\overline{u}_h(\xi+x')-\nabla'\overline{u}_h(x')\bigr\|_{L^2(\omega')}
  \\
 \leq\max_{|\xi|\leq h}\bigl\|\nabla'\overline{u}_h(\xi+x')-\widehat{R}_h(x'+\xi)\bigr\|_{L^2(\omega')} + \max_{|\xi|\leq h}\bigl\|\widehat{R}_h(\xi+x')-\widehat{R}_h(x')
 \bigr\|_{L^2(\omega')} 
 \\
+\bigl\|\widehat{R}_h(x')-\nabla'\overline{u}_h(x')\bigr\|_{L^2(\omega')}\leq C h,
\end{gathered}
\end{equation}
where we use the generalised Minkowski inequality and Theorem \ref{th3.1}. 

Next we estimate the second gradient of $\widetilde{u}_h$, given by
\begin{equation*}
 \nabla'^{\,2}\widetilde{u}_h = \int\limits_{\mathbb R^2} \nabla'(\varphi_h(\xi-x'))\otimes\nabla'\overline{u}_h(\xi)\,d\xi = - \int\limits_{\mathbb R^2} \nabla'\varphi_h(\xi)\otimes\nabla'\overline{u}_h(x'+\xi)\,d\xi,
\end{equation*}
where $\nabla'\varphi_h(\xi) = h^{-3}\nabla'\varphi(h^{-1}\xi)$. Since the mollifier $\varphi$ is radially symmetric, $\partial_{\xi_1}\varphi_h(\xi)$ is an odd function of $\xi_1$ and an even function of $\xi_2$, and analogously for $\partial_{\xi_2}\varphi_h(\xi)$, hence we write
\begin{equation*}
 \partial_i\nabla'\widetilde{u}_h = - \int\limits_{\{|\xi|\leq h, \xi_i>0\}} \partial_i\varphi_h(\xi)\bigl(\nabla'\overline{u}_h(x'+\xi) - \nabla'\overline{u}_h(x'-\xi)\bigr)\,d\xi, \quad i=1,2.
\end{equation*}
Using the generalised Minkowski inequality once again we have, for $i=1,2$
\begin{equation}\label{2.33}
\begin{gathered}
  \|\partial_i\nabla'\widetilde{u}_h\|_{L^2(\omega')} \leq \int\limits_{\{|\xi|\leq h, \xi_i>0\}}\Bigl\|\partial_i\varphi_h(\xi)\bigl(\nabla'\overline{u}_h(x'+\xi) - \nabla'\overline{u}_h(x'-\xi)\bigr)\Bigr\|_{L^2(\omega')}\,d\xi
 \\
 \leq Ch^{-1} \max_{|\xi|\leq h}\bigl\|\nabla'\overline{u}_h(x'+\xi) - \nabla'\overline{u}_h(x'-\xi)\bigr\|_{L^2(\omega')} \leq C.
\end{gathered}
\end{equation}
The last inequality is obtained analogously to (\ref{2.79}).

Finally, we argue that $\nabla'\widetilde{u}_h$ is bounded pointwise independently of $h$:
\begin{equation*}
\begin{gathered}
\big|\nabla'\widetilde{u}_h(x') \big| = \Big|\int\limits_{\mathbb R^2} \varphi_h(\xi)\nabla'\overline{u}_h(\xi+x')\,d\xi \Big|\leq\bigl\|\varphi_h(\xi)\bigr\|_{L^2(B_h)} \|\nabla'\overline{u}_h(\xi+x')\|_{L^2(B_h)} 
\\
\leq C h^{-1}\bigl\|\nabla'\overline{u}_h(\xi+x')-R_h(\xi +x')\bigr\|_{L^2(B_h)} + C h^{-1}\bigl\|R_h(\xi +x')\bigr\|_{L^2(B_h)}\leq C,
\end{gathered}
\end{equation*}
for all points $x'\in\omega$ and sufficiently small values of $h$.
\end{proof}

As one can see from the above, $\nabla'\widetilde{u}_h$ is a ``reasonably good'' approximation of $\nabla' u_h$. It remains to approximate $h^{-1} \partial_3 u_h$ by a smooth vector function. Let $\widetilde{n}_h$ be the unit vector orthogonal to the surface $\widetilde{u}_h$:
\begin{equation*}
 \widetilde{n}_h := \partial_1 \widetilde{u}_h \wedge \partial_2 \widetilde{u}_h.
\end{equation*}
We show that the $L^2$-norm of $\widetilde{n}_h - R_{h,3}$ is at most of order $h,$ where $R_{h,i}$ stands for the $i$-th column of $R_h$. To this end we first transform the above expression for $\widetilde{n}_h$, as follows:
\begin{equation*}
\begin{gathered}
 \widetilde{n}_h = \partial_1 \widetilde{u}_h \wedge \partial_2 \widetilde{u}_h = \big(R_{h,1}+ (\partial_1 \widetilde{u}_h-R_{h,1})\big) \wedge \big(R_{h,2} + (\partial_2 \widetilde{u}_h - R_{h,2})\big) 
 \\
 = R_{h,3} + (\partial_1 \widetilde{u}_h-R_{h,1}) \wedge R_{h,2} + R_{h,1}\wedge (\partial_2 \widetilde{u}_h - R_{h,2}) + (\partial_1 \widetilde{u}_h-R_{h,1}) \wedge  (\partial_2 \widetilde{u}_h - R_{h,2}).
\end{gathered}
\end{equation*}
Since $R_h$ and $\nabla' \widetilde u_h$ are $L^\infty$-bounded, the $L^2$-norms of the last three terms are of order $h$, hence 
\begin{equation}\label{2.39}
\begin{gathered}
\|\widetilde{n}_h - R_{h,3}\|_{L^2(\omega')} \leq C h.
\end{gathered}
\end{equation}
Additionally, it is clear from (\ref{2.33}) and (\ref{2.40}) that 
\begin{equation}\label{2.41}
\begin{gathered}
\|\nabla' \widetilde{n}_h\|_{L^2(\omega')} \leq C.
\end{gathered}
\end{equation}

Putting together the above estimates for $\widetilde{n}_h$, Theorem \ref{l3.2} and Theorem \ref{th3.1} we obtain the following result.
\begin{proposition}\label{p3.3}
 Let $\omega'$ be a domain whose closure is contained in $\omega$. Under the assumptions of Theorem \ref{th3.1} the following estimates hold:
 \begin{equation*}
    \big\|(\nabla'\widetilde{u}_h\vert\,\widetilde{n}_h) - \nabla_h u_h\big\|_{L^2(\Omega')} \leq C h,
 \end{equation*}
 \begin{equation*}
    \big\|(\nabla'\widetilde{u}_h\vert\, \widetilde{n}_h) - R_h\big\|_{L^2(\omega')} \leq C h.
 \end{equation*}
\end{proposition}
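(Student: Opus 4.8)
The plan is to derive both bounds column by column, writing the $3{\times}3$ matrix $(\nabla'\widetilde u_h\,\vert\,\widetilde n_h)$ as the juxtaposition of its first two columns $\nabla'\widetilde u_h$ and its third column $\widetilde n_h$, comparing each block with the corresponding block of $R_h$ (respectively of $\nabla_h u_h$), and then invoking nothing beyond the triangle inequality together with the estimates already assembled in this section: inequality (\ref{2.79}) for $\nabla'\widetilde u_h-\nabla'\overline u_h$, inequality (\ref{addeq}) for $\nabla'\overline u_h-\widehat R_h$, inequality (\ref{2.39}) for $\widetilde n_h-R_{h,3}$, and parts~1--2 of Theorem~\ref{th3.1}. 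The only preliminary remark needed is that, since $\overline{\omega'}\subset\omega$, one has $\omega'\subset\omega_h$ for all sufficiently small $h$, so that $R_h$ is defined on $\omega'$ and all the quoted estimates make sense there; I would also use repeatedly that $\widehat R_h$, $R_{h,3}$, $\nabla'\widetilde u_h$ and $\widetilde n_h$ depend on $x'$ only, so that their $L^2(\Omega')$-norms coincide with their $L^2(\omega')$-norms because $|I|=1$.

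First I would prove the second estimate, on $\omega'$. For the first two columns,
\[
\|\nabla'\widetilde u_h-\widehat R_h\|_{L^2(\omega')}\leq\|\nabla'\widetilde u_h-\nabla'\overline u_h\|_{L^2(\omega')}+\|\nabla'\overline u_h-\widehat R_h\|_{L^2(\omega')}\leq Ch
\]
by (\ref{2.79}) and (\ref{addeq}); for the third column, $\|\widetilde n_h-R_{h,3}\|_{L^2(\omega')}\leq Ch$ is exactly (\ref{2.39}). Adding the squares of these two contributions (the Frobenius norm of a matrix is the $\ell^2$-sum of the norms of its column blocks) gives $\|(\nabla'\widetilde u_h\,\vert\,\widetilde n_h)-R_h\|_{L^2(\omega')}\leq Ch$.

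Next, the first estimate, on $\Omega'=\omega'\times I$. Writing $\nabla_h u_h=(\nabla' u_h\,\vert\,h^{-1}\partial_3 u_h)$, I would split, for the first two columns,
\[
\|\nabla'\widetilde u_h-\nabla' u_h\|_{L^2(\Omega')}\leq\|\nabla'\widetilde u_h-\widehat R_h\|_{L^2(\Omega')}+\|\widehat R_h-\nabla' u_h\|_{L^2(\Omega')},
\]
where the first term equals $\|\nabla'\widetilde u_h-\widehat R_h\|_{L^2(\omega')}\leq Ch$ by the bound just proved, and the second term is $\leq\|R_h-\nabla_h u_h\|_{L^2(\omega_h\times I)}\leq Ch$ by Theorem~\ref{th3.1}(2). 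For the third column, $\|\widetilde n_h-h^{-1}\partial_3 u_h\|_{L^2(\Omega')}\leq\|\widetilde n_h-R_{h,3}\|_{L^2(\Omega')}+\|R_{h,3}-h^{-1}\partial_3 u_h\|_{L^2(\Omega')}$, the first term being $O(h)$ by (\ref{2.39}) and the second $O(h)$ by Theorem~\ref{th3.1}(2). Combining the two column estimates yields $\|(\nabla'\widetilde u_h\,\vert\,\widetilde n_h)-\nabla_h u_h\|_{L^2(\Omega')}\leq Ch$.

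I do not expect a genuine obstacle here: the proposition is essentially a repackaging of the estimates established earlier in this section, and the only points that require even mild care are the inclusion $\omega'\subset\omega_h$ for small $h$ (so that $R_h$ and the quoted bounds are available on $\omega'$) and the harmless identification of $L^2(\omega')$- and $L^2(\Omega')$-norms for quantities independent of $x_3$. The substantive work — the mollification estimates of Theorem~\ref{l3.2} and the bound (\ref{2.39}) for $\widetilde n_h$ — has already been carried out, so the remaining argument is purely a bookkeeping exercise with triangle inequalities.
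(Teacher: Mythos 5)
Your proof is correct and follows essentially the same route as the paper, which simply assembles the estimates (\ref{2.79}), (\ref{addeq}), (\ref{2.39}) and Theorem \ref{th3.1} via the triangle inequality; your explicit remarks about $\omega'\subset\omega_h$ for small $h$ and the $x_3$-independence of the mollified quantities are exactly the bookkeeping the paper leaves implicit.
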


As is customary in periodic homogenisation, and due to the fact that our problem is of two-scale nature, we require a suitable notion of convergence which takes into account the microscopic behaviour, {\it e.g.} the so-called two-scale convergence, introduced by Nguetseng in \cite{Nguetseng}.  Out of several equivalent definitions of two-scale convergence, we will use the one based on periodic unfolding, see {\it e.g.} \cite{CDG}. For convenience of the reader we give the definition of two-scale convergence that we use and some of its properties in Appendix \ref{twoscaleappendix}. Next we prove a general statement about the two-scale convergence of a second gradient, without any reference to a specific problem. This assertion may be used in various contexts where two-scale convergence of a second gradient is required. In the present paper we make use of this statement in Theorems \ref{th4.1} and \ref{th5.6}. In the remainder of this section we consider an arbitrary $d$-dimensional space, returning to the case $d=2$ in Section \ref{moderateregime} onwards.

\begin{theorem}[Second gradient and two-scale convergence]\label{th2.1}
Let $\omega\subset \mathbb R^d$ be a bounded domain and $Y:=[0,1)^d$. Assume that a sequence $v_\e\in H^2(\omega)$ is bounded in $H^2(\omega)$. Then up to a subsequence
 \begin{equation}\label{2.55}
 \nabla^2 v_\e \wtto \nabla^2 v(x) + \nabla_y^2 \varphi(x,y),
 \end{equation}
 where $v\in H^2(\omega)$ is the weak limit in $H^2(\omega)$ (hence, strong limit in $H^1(\omega)$) of the sequence $v_\e$ and 
 $\varphi \in L^2\bigl(\omega; H^2_{\rm per}(Y)\bigr)$ with zero mean with respect to $y\in Y$. 
 
Note that here $v_\e$ is a scalar function. The theorem directly applies to the vector case component-wise.
\end{theorem}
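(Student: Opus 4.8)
The plan is to use the standard two-scale compactness machinery, applied now at the level of second derivatives. Since $\{v_\e\}$ is bounded in $H^2(\omega)$, it is in particular bounded in $H^1(\omega)$, so after passing to a subsequence we have $v_\e \rightharpoonup v$ weakly in $H^2(\omega)$, hence strongly in $H^1(\omega)$ by Rellich, for some $v\in H^2(\omega)$. Moreover $\{\nabla^2 v_\e\}$ is bounded in $L^2(\omega;\mathbb R^{d\times d})$, so by the fundamental compactness theorem of two-scale convergence (see the appendix referenced in the excerpt) there exists, up to a further subsequence, a limit $\Xi \in L^2(\omega\times Y;\mathbb R^{d\times d})$ with $\nabla^2 v_\e \wtto \Xi(x,y)$. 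The whole content of the theorem is the identification of the structure of $\Xi$: namely $\Xi(x,y)=\nabla^2 v(x) + \nabla_y^2\varphi(x,y)$ for some $\varphi\in L^2(\omega;H^2_{\rm per}(Y))$ with zero $y$-mean.

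To identify $\Xi$, I would proceed in two stages, mimicking the classical proof that a two-scale limit of gradients has the form $\nabla v(x)+\nabla_y\psi(x,y)$. First, since $\nabla v_\e \rightharpoonup \nabla v$ weakly in $H^1(\omega)$ hence strongly in $L^2(\omega)$, the vector field $\nabla v_\e$ two-scale converges to its weak limit $\nabla v(x)$ with no $y$-dependence; applying the structure theorem for two-scale limits of gradients to the sequence $\nabla v_\e$ (each component is bounded in $H^1$), one gets that $\nabla^2 v_\e \wtto \nabla^2 v(x) + \nabla_y \Psi(x,y)$ where $\Psi\in L^2(\omega;L^2_{\rm per}(Y;\mathbb R^d))$, i.e. $\Xi - \nabla^2 v(x) = \nabla_y\Psi$. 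Concretely, this comes from testing $\int_\omega \nabla^2 v_\e : \Phi(x,x/\e)\,dx$ against matrix-valued test functions $\Phi$ and integrating by parts: the leading $\e^{-1}$ term forces $\int_{\omega\times Y}(\Xi-\nabla^2 v):\Phi\,dx\,dy = 0$ for all $\Phi(x,y)=\phi(x)\otimes g(y)$ with $g$ $Y$-periodic, $\int_Y g=0$ and $\div_y$-free in an appropriate sense — which characterises $\Xi-\nabla^2 v$ as a $y$-gradient $\nabla_y\Psi$. Second, I must upgrade $\Psi$ to an actual $y$-gradient of a scalar, i.e. show $\Psi = \nabla_y\varphi$ with $\varphi\in L^2(\omega;H^2_{\rm per}(Y))$. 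This uses the symmetry and the fact that $\Xi$ (as a limit of Hessians) is \emph{curl-free in $y$} in the distributional sense: for each fixed-ish $x$, $\Xi(x,\cdot)$ annihilates $y$-curls, equivalently the Schwarz relations $\partial_{y_k}\Xi_{ij}=\partial_{y_i}\Xi_{kj}$ hold weakly. Because $\nabla^2 v(x)$ is itself symmetric and constant in $y$, the same holds for $\Xi-\nabla^2 v=\nabla_y\Psi$, and the $y$-curl-free condition lets one integrate: $\Psi_i = \partial_{y_i}\varphi$ for a scalar $\varphi(x,\cdot)\in H^2_{\rm per}(Y)$, uniquely determined up to an $x$-dependent constant which we fix by requiring $\int_Y\varphi\,dy=0$. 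Measurability of $x\mapsto\varphi(x,\cdot)$ and the $L^2$ bound follow from the corresponding properties of $\Psi$ together with the Poincaré–Wirtinger inequality on $Y$.

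The cleanest way to do both stages at once is to test directly against second-order test functions: take $\Phi\in C^\infty_c(\omega; C^\infty_{\rm per}(Y))$ scalar-valued and compute $\lim_{\e\to 0}\int_\omega \nabla^2 v_\e(x)\,\Phi(x,x/\e)\,dx$ by moving both derivatives onto $\Phi$ via integration by parts (legitimate since $v_\e\in H^2$), expand $\nabla^2_x[\Phi(x,x/\e)] = (\nabla^2_x\Phi) + \e^{-1}(\text{mixed }\nabla_x\nabla_y\Phi) + \e^{-2}(\nabla^2_y\Phi)(x,x/\e)$, and match orders in $\e$ using the definition of two-scale convergence for $v_\e\to v$ (order $\e^0$), $\nabla v_\e\to\nabla v$ (the $\e^{-1}$ term, which vanishes in the limit because $\int_Y\nabla_y\cdot(\ldots)\,dy$-type terms integrate to zero after using that $\nabla v_\e$ has no oscillating part), and finally reading off that $\int_{\omega\times Y}\Xi\,\Phi = \int_{\omega\times Y}(\nabla^2 v)\Phi + \int_{\omega\times Y} v(x)\,\nabla^2_y\Phi$. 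Integrating by parts back in $y$ in the last term and using density of such $\Phi$ identifies $\int_{\omega\times Y}(\Xi-\nabla^2 v)\Phi = -\int_{\omega\times Y}\varphi\,\nabla^2_y\Phi$ is consistent only with $\Xi-\nabla^2 v=\nabla^2_y\varphi$, $\varphi\in L^2(\omega;H^2_{\rm per}(Y))$. The main obstacle, and the step requiring genuine care rather than bookkeeping, is the identification of the $y$-dependent corrector as a bona fide \emph{second} $y$-gradient of a single scalar $\varphi$ with full $H^2_{\rm per}(Y)$ regularity (not merely an $L^2$ matrix field satisfying compatibility conditions): one must verify that the distributional compatibility relations inherited from the Hessian structure are exactly enough to invoke the Poincaré-type lemma on the torus twice, and control the resulting norms uniformly in $x$; handling the vector-valued case is then immediate by applying the scalar result componentwise, as stated.
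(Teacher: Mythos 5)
Your main (two-stage) argument is correct and follows a genuinely different route from the paper. The paper works entirely inside the unfolding framework: it builds the renormalised function $\widetilde v_\e$ of (\ref{2.56}), bounds it in $L^2(\omega;H^2(Y))$ via the Poincar\'e inequality (its $y$-Hessian being exactly $\mathcal T_\e(\nabla^2 v_\e)$), extracts a weak limit, splits off the quadratic term $\frac12\,y^c\cdot(\nabla^2v)\,y^c$, and then must \emph{prove} periodicity of $\varphi$ and $\nabla_y\varphi$ by comparing traces of $\widetilde v_\e$ on opposite faces of $Y$ (using $\mathcal T_\e(v_\e)(x,y',1)=\mathcal T_\e(v_\e)(x+\e e_d,y',0)$ and Proposition \ref{p2.2}). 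You instead apply the classical first-order structure theorem componentwise to $\nabla v_\e$ (bounded in $H^1$), obtaining $\Xi_{ij}=\partial_{ij}v+\partial_{y_j}\Psi_i$ with $\Psi_i\in L^2(\omega;H^1_{\rm per}(Y))$, and then use the symmetry of the Hessian to get $\partial_{y_j}\Psi_i=\partial_{y_i}\Psi_j$, i.e.\ a $y$-curl-free, mean-zero periodic field, which the Poincar\'e lemma on the torus (e.g.\ by Fourier series in $y$, which also settles measurability in $x$ and the $L^2(\omega;H^2_{\rm per}(Y))$ bound via Poincar\'e--Wirtinger) identifies as $\nabla_y\varphi$ with $\varphi\in H^2_{\rm per}(Y)$ for a.e.\ $x$. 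In your route the periodicity of the corrector is inherited from the known first-order result, so the trace argument disappears; the price is the extra curl-free/potential step and the a.e.-in-$x$ bookkeeping, which you correctly flag. The paper's construction is more self-contained and its auxiliary function is reused later (the function $z_\e$ in Proposition \ref{l2.5} is of the same type), which is part of why the authors set it up this way.

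One caution: the ``cleanest way to do both stages at once'' in your last paragraph does not work as described. Moving both derivatives onto $\Phi(x,x/\e)$ produces terms of order $\e^{-1}$ and $\e^{-2}$ which are individually divergent and whose limits encode precisely the unknown oscillation of $v_\e$ at order $\e^2$; they cannot be ``matched'' using only $v_\e\to v$ and $\nabla v_\e\to\nabla v$. Moreover the identity you propose to read off collapses: since $\Phi$ is $Y$-periodic, $\int_Y\nabla_y^2\Phi\,dy=0$, so the term $\int_{\omega\times Y}v(x)\,\nabla_y^2\Phi\,dx\,dy$ vanishes and the asserted relation would force $\Xi=\nabla^2 v$ with no corrector, which is false in general. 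A genuine duality proof must restrict to matrix test fields killing the singular orders (e.g.\ with $\div_y\div_y\Phi=0$) and then characterise the annihilator as the set of $y$-Hessians of periodic functions; since your two-stage argument already suffices, you should simply drop this shortcut.
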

\begin{proof}
Following the idea of the proof of the two-scale convergence for the first gradients in \cite{CDG} we introduce the following function of $x$ and $y$:
 \begin{equation}\label{2.56}
  \widetilde{v}_\e(x,y):= \e^{-1}\Big(\e^{-1}\big(\mathcal T_\e(v_\e)-\mathcal M_Y(\mathcal T_\e(v_\e))\big) - y^c \cdot \mathcal M_Y(\mathcal T_\e(\nabla v_\e))\Big),
 \end{equation}
 where $\mathcal T_\e$ is the unfolding operator (see Appendix \ref{twoscaleappendix}), $M_Y(f) := \int_Y f dy$ is the operator of averaging with respect to the variable $y$, and $y^c:= y - (1/2,\ldots,1/2)^\top$. We calculate the first and second gradients of $\widetilde{v}_\e$ with respect to $y$:
 \begin{equation*}
  \nabla_y\widetilde{v}_\e= \e^{-1}\big(\mathcal T_\e(\nabla v_\e)- \mathcal M_Y(\mathcal T_\e(\nabla v_\e))\big),\ \ \ \ \ \ \ \ \ \ \ \ 
  \nabla_y^2\widetilde{v}_\e= \mathcal T_\e(\nabla^2 v_\e).
 \end{equation*}
 Notice that the  function $\widetilde{v}_\e$ and its gradient $\nabla_y\widetilde{v}_\e$ have zero mean value in $y$. Hence, we can apply the Poincar\'e inequality:
 \begin{equation*}
  \|\widetilde{v}_\e\|_{L^2(\omega\times Y)}\leq C\|\nabla_y\widetilde{v}_\e\|_{L^2(\omega\times Y)} \leq C\| \nabla_y^2\widetilde{v}_\e\|_{L^2(\omega\times Y)}\leq C,
 \end{equation*}
 where the last bound follows from the assumptions of the theorem. Hence, the sequence $\widetilde{v}_\e$ is bounded in $L^2\bigl(\omega; H^2(Y)\bigr)$ and it converges weakly in this space, up to a subsequence:
  \begin{equation*}
  \widetilde{v}_\e \rightharpoonup \widetilde{\varphi} \mbox{ weakly in } L^2\bigl(\omega; H^2(Y)\bigr).
 \end{equation*}
 Let a function $\varphi \in L^2\bigl(\omega; H^2(Y)\bigr)$ be such that 
  \begin{equation}
\widetilde{\varphi} = \frac{1}{2}\,y^c\cdot (\nabla^2 v)y^c + \varphi.
\label{triangle}
 \end{equation}
 Clearly, under this notation one has
   \begin{equation*}
  \nabla_y^2\widetilde{v}_\e \rightharpoonup \nabla^2 v(x) + \nabla_y^2 \varphi(x,y) \mbox{ weakly in } L^2(\omega\times Y),
 \end{equation*}
 which is equivalent to (\ref{2.55}). Now it only remains to prove the periodicity of $\varphi$ with respect to $y$. To this end, let $\psi(x,y')$ be an arbitrary function from $C_0^\infty(\omega\times Y')$, where $y':=(y_1,\ldots,y_{d-1})^\top$ and $Y':=[0,1)^{d-1}$. We write the difference between values of $\widetilde{v}_\e$ on the opposite faces of the cube $Y$ corresponding to $y_d=1$ and $y_d=0$ (the proof for other components of $y$ being exactly the same),
 \begin{equation*}
 \begin{gathered}
  \widetilde{v}_\e(x,y',1) - \widetilde{v}_\e(x,y',0) = \e^{-2}\big(\mathcal T_\e(v_\e)(x,y',1) - \mathcal T_\e(v_\e)(x,y',0)\big) - \e^{-1}\mathcal M_Y(\mathcal T_\e(\partial_d v_\e)) =
\\
= \e^{-2}\Big(\mathcal T_\e(v_\e)(x,y',1) - \mathcal T_\e(v_\e)(x,y',0) - \mathcal M_Y(\partial_{y_d}\mathcal T_\e(v_\e))\Big)=
 \\ 
 = \e^{-2}\Big(\mathcal T_\e(v_\e)(x,y',1) - \mathcal T_\e(v_\e)(x,y',0)- \int\limits_{Y'}\big(\mathcal T_\e(v_\e)(x,y',1) - \mathcal T_\e(v_\e)(x,y',0)\big)dy' \Big).
 \end{gathered}
 \end{equation*}
Notice that $\mathcal T_\e(v_\e)(x,y',1) = \mathcal T_\e(v_\e)(x+\e e_d,y',0)$, where $e_d$ is a unit vector in the direction of $x_d$ axis, hence we derive that 
 \begin{equation}\label{2.64}
 \begin{gathered}
  \int\limits_{\omega\times Y'}\big(\widetilde{v}_\e(x,y',1) - \widetilde{v}_\e(x,y',0)\big) \psi \, dx dy' = 
 \\ 
 = \int\limits_{\omega\times Y'} \e^{-2}\Big(\mathcal T_\e(v_\e)(x,y',0) - \int\limits_{Y'}\mathcal T_\e(v_\e)(x,y',0)dy' \Big) 
 \big(\psi(x-\e e_d,y') - \psi(x,y')\big) \, dx dy'
 \end{gathered}
 \end{equation}
The sequence $\e^{-1}(\psi(x-\e e_d,y') - \psi(x,y'))$ converges to $-\partial_d \psi$ strongly in $L^2(\omega\times Y')$. Let us denote 
\begin{equation*}
Z_\e(x,y):= \e^{-1}\big(\mathcal T_\e(v_\e)(x,y) - \int_{Y'}\mathcal T_\e(v_\e)(x,y)dy' \big).
\end{equation*}
Using a slight variation of the argument in Proposition 3.4 in \cite{CDG} one can see that $Z_\e$ converges strongly in $L^2(\omega; H^1(Y))$ to $(y^c)' \cdot \nabla' v$ as $\e\to 0$. Hence, its trace 
\begin{equation*}
 Z_\e(x,y',0)= \e^{-1}\big(\mathcal T_\e(v_\e)(x,y',0) - \int_{Y'}\mathcal T_\e(v_\e)(x,y',0)dy' \big)
\end{equation*}
converges to $(y^c)' \cdot \nabla' v$ weakly in $L^2(\omega\times Y')$, see Proposition \ref{p2.2} below. Passing to the limit in (\ref{2.64}) yields 
 \begin{equation}\label{3.40}
 \begin{gathered}
 \lim_{\e\to 0} \int\limits_{\omega\times Y'}\big(\widetilde{v}_\e(x,y',1) - \widetilde{v}_\e(x,y',0)\big) \psi \, dx dy' =  - \int\limits_{\omega\times Y'}(y^c)' \cdot \nabla' v \,\partial_d \psi \, dx dy'.
 \end{gathered}
 \end{equation}
We carry out the the same calculations for $z(x,y):=\frac{1}{2}\,y^c\cdot(\nabla^2 v)\, y^c,$ which results in
\begin{equation*}
  z(x,y',1) - z(x,y',0) = (y^c)' \cdot \partial_d\nabla' v.
\end{equation*}
Multiplying the last expression by $\psi$ and integrating by parts we get
 \begin{equation*}
 \begin{gathered}
\int\limits_{\omega\times Y'}\big(z(x,y',1) - z(x,y',0)\big) \psi \, dx dy' = - \int\limits_{\omega\times Y'}(y^c)'\cdot \nabla' v \,\partial_d \psi \, dx dy'.
 \end{gathered}
 \end{equation*}
Comparing the latter with (\ref{3.40}) we conclude that $\varphi(x,y)$ is periodic with respect to $y$. 

Analogously we prove the periodicity of $\nabla_y \varphi$. Indeed, we have 
 \begin{equation*}
 \begin{gathered}
\nabla_y\widetilde{v}_\e(x,y',1) - \nabla_y\widetilde{v}_\e(x,y',0) =  \e^{-1}\big(\mathcal T_\e(\nabla v_\e)(x,y',1) - \mathcal T_\e(\nabla v_\e)(x,y',0)\big).
 \end{gathered}
 \end{equation*}
Note that by the assumptions of the theorem and properties of two-scale convergence (see Appendix \ref{twoscaleappendix}) the sequence $\mathcal T_\e(\nabla v_\e)$ converges to $\nabla v$ strongly in $L^2(\omega; H^1(Y))$. Hence, by Proposition \ref{p2.2} below the trace $\mathcal T_\e(\nabla v_\e)(x,y',0)$ converges to $\nabla v$ weakly in $L^2(\omega\times Y')$, and therefore
 \begin{equation*}
 \begin{gathered}
 \lim_{\e\to 0} \int\limits_{\omega\times Y'}\big(\nabla_y\widetilde{v}_\e(x,y',1) - \nabla_y\widetilde{v}_\e(x,y',0)\big) \psi \, dx dy' 
\\
= \lim_{\e\to 0} \int\limits_{\omega\times Y'}\mathcal T_\e(\nabla v_\e)(x,y',0)\,\e^{-1}\big(\psi(x-\e e_d,y') - \psi(x,y')\big) \, dx dy' = 
 - \int\limits_{\omega\times Y'}\nabla v \,\partial_d \psi \, dx dy'.
 \end{gathered}
 \end{equation*}
Since $\nabla_y z(x,y',1) - \nabla_y z(x,y',0) = \partial_d \nabla v$ we have
 \begin{equation*}
 \begin{gathered}
\int\limits_{\omega\times Y'}\big(\nabla_y z(x,y',1) - \nabla_y z(x,y',0)\big) \psi \, dx dy' = - \int\limits_{\omega\times Y'}\nabla v \,\partial_d \psi \, dx dy',
 \end{gathered}
 \end{equation*}
which implies the periodicity of $\nabla_y \varphi$. Summarising all of the above we conclude that $\varphi \in L^2(\omega; H^2_{\rm per}(Y))$.
\end{proof}

\begin{proposition}\label{p2.2}
 Suppose that a sequence $f_n(x,y)$ converges to $f(x,y)$ weakly in $L^2(\omega; H^1(Y))$ and strongly in $L^2(\omega\times Y)$. Then its trace on $\omega\times \partial Y$ converges to the trace of $f$ weakly in $L^2(\omega\times \partial Y)$.
\end{proposition}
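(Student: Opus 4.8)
\noindent\emph{Proof plan.} The plan is to deduce the statement from two soft facts: the trace operator $T\colon H^1(Y)\to L^2(\partial Y)$ is bounded and linear (legitimate since $Y=[0,1)^d$ is a Lipschitz domain), and a bounded linear operator between Hilbert spaces maps weakly convergent sequences to weakly convergent ones. First I would observe that $T$ induces a bounded linear operator
\[
\mathcal T\colon L^2\bigl(\omega;H^1(Y)\bigr)\longrightarrow L^2(\omega\times\partial Y),\qquad (\mathcal T f)(x,\cdot):=T\bigl(f(x,\cdot)\bigr),
\]
with $\|\mathcal T f\|_{L^2(\omega\times\partial Y)}\le C\|f\|_{L^2(\omega;H^1(Y))}$, which is immediate upon squaring the scalar trace inequality and integrating in $x$. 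Since $f_n\rightharpoonup f$ weakly in $L^2(\omega;H^1(Y))$, for every $g\in L^2(\omega\times\partial Y)$ one has $\langle g,\mathcal T f_n\rangle=\langle \mathcal T^{*}g,f_n\rangle\to\langle \mathcal T^{*}g,f\rangle=\langle g,\mathcal T f\rangle$, i.e. $\mathcal T f_n\rightharpoonup \mathcal T f$ in $L^2(\omega\times\partial Y)$, which is exactly the assertion. Along this route the strong $L^2(\omega\times Y)$ convergence is not needed; it only serves to pin down the limit in the more elementary argument below.

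If one prefers a self-contained argument in the spirit of the rest of the paper, avoiding $\mathcal T^{*}$, I would proceed as follows. Weak convergence in $L^2(\omega;H^1(Y))$ gives a uniform bound on $\|f_n\|_{L^2(\omega;H^1(Y))}$, hence, by the trace inequality, a uniform bound on $\|f_n\|_{L^2(\omega\times\partial Y)}$; consequently it suffices to compute $\lim_n\int_{\omega\times\partial Y}f_n\,\Psi$ for $\Psi$ in a dense subset of $L^2(\omega\times\partial Y)$. Take $\Psi$ supported in the relative interior of a single face, say $\{y_d=0\}$ with outer normal $-e_d$, and write it as $\Phi\cdot\nu$ with $\Phi(x,y):=-\chi(y_d)\,g(x,y')\,e_d$, where $g\in C_0^\infty(\omega\times Y')$, $Y':=[0,1)^{d-1}$, and $\chi\in C^\infty([0,1])$ equals $1$ near $0$ and $0$ near $1$; then $\Phi\cdot\nu=g$ on $\{y_d=0\}$ and vanishes on every other face. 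The divergence theorem in $y$ (for a.e.\ $x$), integrated over $\omega$, gives
\[
\int_{\omega\times\partial Y}f_n\,(\Phi\cdot\nu)\,dx\,dS(y)=\int_{\omega\times Y}(\div_y\Phi)\,f_n\,dx\,dy+\int_{\omega\times Y}\Phi\cdot\nabla_y f_n\,dx\,dy .
\]
On the right-hand side the first integral converges because $f_n\to f$ strongly in $L^2(\omega\times Y)$, and the second because $\nabla_y f_n\rightharpoonup\nabla_y f$ weakly in $L^2(\omega\times Y)$ (a component of the weak $H^1$ convergence); applying the divergence theorem to $f$ identifies the limit as $\int_{\omega\times\partial Y}f\,(\Phi\cdot\nu)\,dx\,dS(y)$. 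Letting $g$ vary and running over all $2d$ faces exhausts a dense subset of $L^2(\omega\times\partial Y)$, and together with the uniform bound this yields $f_n|_{\omega\times\partial Y}\rightharpoonup f|_{\omega\times\partial Y}$.

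I do not expect a genuine obstacle. The only point deserving a little care is that $\partial Y$ is merely piecewise smooth, which is why in the elementary argument one isolates a single face and arranges $\Phi$ to vanish on the remaining ones (and why, in the soft argument, one should invoke the trace theorem for Lipschitz rather than smooth domains). It is also worth recording explicitly that finite sums of functions supported in the open faces are dense in $L^2(\omega\times\partial Y)$, so that testing against them, in the presence of the uniform norm bound, does suffice to conclude weak convergence.
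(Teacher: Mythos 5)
Your proposal is correct, and it takes a genuinely different route from the paper. The paper's proof fixes a test function $\psi\in C_0^\infty(\omega\times Y')$, integrates in $x$ to form the scalar sequence $h_n(y):=\int_\omega f_n\,\psi\,dx$, notes that $h_n$ is bounded in $H^1(Y)$ and, thanks to the strong $L^2(\omega\times Y)$ convergence of $f_n$, tends to $0$ strongly in $L^2(Y)$, hence weakly in $H^1(Y)$; the trace theorem on $Y$ then gives $h_n(y',0)\to 0$ in $L^2(Y')$, and integrating over $Y'$ yields the claim face by face. Your first argument is softer: the trace operator $H^1(Y)\to L^2(\partial Y)$ induces a bounded linear map $L^2\bigl(\omega;H^1(Y)\bigr)\to L^2(\omega\times\partial Y)$, and bounded linear operators between Hilbert spaces are weak-to-weak continuous, so the conclusion follows from the weak $L^2\bigl(\omega;H^1(Y)\bigr)$ convergence alone; you correctly observe that the strong $L^2(\omega\times Y)$ hypothesis is then superfluous (in the paper it serves only to identify the limit cheaply in their reduction). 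Your second, divergence-theorem argument is also sound, and the delicate points are exactly the ones you flag and handle: $\partial Y$ is only piecewise smooth, so you isolate one face and choose $\Phi$ vanishing on the others; face-supported test functions are dense in $L^2(\omega\times\partial Y)$ because edges carry zero surface measure; and the uniform trace bound converts convergence on that dense set into weak convergence. (Incidentally, in that argument the volume term $\int_{\omega\times Y}(\div_y\Phi)\,f_n$ already converges under weak $L^2(\omega\times Y)$ convergence, since $\div_y\Phi$ is a fixed square-integrable multiplier, so strong convergence is again not essential.) What the paper's route buys is that it stays within the elementary toolkit already used in Theorem \ref{th2.1}, needing only the one-variable trace theorem on $Y$; your operator-theoretic route buys brevity and a slightly stronger statement, and your integration-by-parts route buys a self-contained argument in the same spirit as the rest of the paper.
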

\begin{proof}
 Without loss of generality we will assume that $f=0$ and prove the assertion for the part of the boundary corresponding to $y_n=0$. We need to show that 
  \begin{equation}\label{2.70}
 \begin{gathered}
\lim_{n\to \infty}\int\limits_{\omega\times Y'}f_n(x,y',0) \psi(x,y') \, dx dy' = 0,
 \end{gathered}
 \end{equation}
 for all $\psi\in C^\infty_0(\omega\times Y')$. Let us fix $\psi$ and consider it as a function defined on $\omega\times Y$ via $\psi(x,y):=\psi(x,y')$. Let us define function $h_n(y)$ as $\int_\omega f_n \psi\, dx$. It is easy to see that $h_n \in H^1(Y)$ and $\|h_n\|_{L^2(Y)}\leq C\|f_n\|_{L^2(\omega\times Y)}$, $\|h_n\|_{H^1(Y)}\leq C\|f_n\|_{L^2(\omega; H^1(Y))}$. Hence $h_n\to 0$ weakly in $H^1(Y)$ and strongly in $L^2(Y)$. By the trace theorem $h_n(y',0) \to 0$ strongly in $L^2(Y')$. In particular, $\int_{Y'} h_n(y',0) \, dy' \to 0$. But $h_n(y',0) = \int_\omega f_n(x,y',0) \psi(x,y')\, dx$, hence (\ref{2.70}) follows.
\end{proof}

Now, we have all necessary tools to proceed to the $\Gamma$-convergence statement.

\section{Limit functional and $\Gamma$-convergence for the ``moderate'' thickness-to-period ratio $\e\gg h\gg \e^2$}
\label{moderateregime}

Our main result is establishing $\Gamma$-convergence of the rescaled functional $E_h$ to the limit homogenised functional defined on isometric surfaces. $\Gamma$-convergence is a standard tool to describe the asymptotic behaviour of nonlinear functionals, introduced by De Giorgi \cite{DeGiorgiFranzoni}. The general definition of $\Gamma$-convergence is the following:
\begin{definition}
Let $F_i$, $i=1,2,\ldots$ and $F_\infty$ be some functionals defined on functional spaces $\mathcal H_i$ and $\mathcal{H}_\infty$. Then we say that the sequence of functionals $F_i$ $\Gamma$-converges to the functional $F_\infty$,
\begin{equation*}
F_\infty = \Gamma-\lim_{i\to \infty} F_i,
\end{equation*} 
if the following two properties hold:
\begin{enumerate}

 \item {\bf Lower bound.} For any sequence $u_i \in \mathcal H_i$ converging in a certain sense to $u_\infty \in \mathcal H_\infty$ one has
\begin{equation*}
 \liminf_{i\to \infty} F_i(u_i) \geq F_\infty(u_\infty).
\end{equation*}

\item {\bf Recovery of the lower bound.}  For any $u_\infty \in \mathcal H_\infty$ with $F_\infty(u_\infty)< \infty$ there exists a sequence $u_i \in \mathcal H_i$ converging to $u_\infty$ in a certain sense such that 
\begin{equation*}
  \lim_{i\to \infty} F_i(u_i) = F_\infty(u_\infty).
\end{equation*}
\end{enumerate}
\end{definition}
In the above definition the notion of convergence depends on the underlying properties of the functionals and may be quite non-standard, in particular, because the spaces $\mathcal H_i$ and $\mathcal H_\infty$ often do not coincide, as in the present paper, for example. Moreover, the notion of convergence in the second part of the definition may be different from the convergence in the first part. In this case the former must normally be ``stronger'' that the latter in order to facilitate the main purpose of the definition (nevertheless, this requirement is not always critical, {\it e.g.} see \cite{CherCher}). The lower bound inequality ensures that the limit functional gives in the limit a lower bound for the values of the  functionals $F_i$. The recovery property implies that the lower bound $F_\infty$ is the greatest lower bound. In particular, the definition implies that 
\begin{equation*}
\lim_{i\to\infty} (\inf F_i) = \inf F_\infty,
\end{equation*}
and, if a sequence $u_i$ of ``almost minimisers'' of $F_i$ converges to $u_\infty$, then the latter is a minimiser of $F_\infty$.

In the present work we obtain different $\Gamma$-limits for the regimes $\e\gg h\gg \e^2$ and $\e^2\gg h$. In this section we present our results for the ``moderate'' regime $\e\gg h\gg \e^2$, for which the limit homogenised functional is given by 
\begin{equation*}
E_{\rm hom}^{\rm m}(u):=\frac{1}{12}\int\limits_{\omega} Q_{\rm hom}^{\rm m}({\rm II}) dx', \,\, u \in H^2_{\rm iso}(\omega),
\end{equation*}
where ${\rm II}$ is the matrix of the second fundamental form of $u$,
\begin{equation*}
 {\rm II} : = (\nabla' u)^\top \nabla' n, \quad n=\partial_1 u \wedge \partial_2 u,
\end{equation*}
and the homogenised stored-energy function is given by
\begin{equation}\label{2.49}
\begin{gathered}
Q_{\rm hom}^{\rm m}({\rm II}) : = \min_{\psi \in H^2_{\rm per}(Y)} \int\limits_Y Q_2\bigl(y,{\rm II} + \nabla_y^2 \psi(y)\bigr) dy,
 \end{gathered}
\end{equation}
where $Q_2$ is the quadratic form defined on $2\times 2$ matrices and is obtained from the form $Q_3$ by minimising when reducing to two dimensions (notice that $Q_3$ vanishes on skew-symmetric matrices):
\begin{equation}\label{58}
Q_2(y, A):= \min_{a\in \mathbb R^3} Q_3(y, {A} +e_3 \otimes a + a \otimes e_3) = \min_{a\in \mathbb R^3} Q_3(y, {A}  + a \otimes e_3).
\end{equation}
where we implicitly assume that the space $\mathbb R^{2\times 2}$ matrices is naturally embedded in the space $\mathbb R^{3\times 3}$ by adding zero third column and zero third row, {\it cf.} \cite{FJM2002}.

\begin{theorem}\label{th4.1}
Suppose that $\e\gg h\gg \e^2$, {\it i.e.} $\e^{-1} h = o(1)$ and $h^{-1}\e^2=o(1)$ as $h\to 0$. Then the rescaled sequence of functionals $h^{-2} E_h$ $\Gamma$-converges to the limit functional $E_{\rm hom}^{\rm m}$ in the following sense.
\begin{enumerate}
 \item {\bf Lower bound.} For every bounded bending energy sequence $u_h\in H^1(\Omega)$ such that $\nabla' u_h$ converges to $\nabla' u$ weakly in $L^2(\Omega)$, $u\in H^2_{\rm iso}(\omega)$, the lower semicontinuity type inequality holds:
 \begin{equation*}
  \liminf_{h\to 0} h^{-2} E_h(u_h) \geq E_{\rm hom}^{\rm m}(u).
 \end{equation*}
\item {\bf Recovery of the lower bound.} For every $u\in H^2_{\rm iso}(\omega)$ there exists a sequence $u_h^{\rm rec}\in H^1(\Omega)$ such that $\nabla' u_h^{\rm rec}$ converges to $\nabla' u$ strongly in $L^2(\Omega)$ and
 \begin{equation*}
  \lim_{h\to 0} h^{-2} E_h(u_h^{\rm rec}) = E_{\rm hom}^{\rm m}(u).
 \end{equation*}
\end{enumerate}
\end{theorem}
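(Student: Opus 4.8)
The plan is to prove the two halves of the $\Gamma$-convergence statement separately, using the smoothing machinery of Section~\ref{compactness} as the main new tool. For the \textbf{lower bound}, I would start from a bounded-bending-energy sequence $u_h$ and, passing to a subsequence, invoke Theorem~\ref{th3.1} to obtain the limiting isometric immersion $u$, the piecewise-constant rotations $R_h$, and the smooth approximations $\widetilde u_h$, $\widetilde n_h$ of Proposition~\ref{p3.3}. The key point is that, by Theorem~\ref{l3.2}, the pair $(\widetilde u_h\mid\widetilde n_h)$ has $H^1$-norm (in $x'$) of its gradient bounded uniformly in $h$ on any $\omega'\Subset\omega$, so Theorem~\ref{th2.1} applies: the Hessians $\nabla'^{\,2}\widetilde u_h$ two-scale converge (up to a further subsequence) to $\nabla'^{\,2}u + \nabla_y^2\psi$ for some $\psi\in L^2(\omega;H^2_{\rm per}(Y))$ with zero $y$-mean. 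One then writes, on each $h$-cell, $W(\e^{-1}x',\nabla_h u_h) = W(\e^{-1}x', R_h^\top\nabla_h u_h)$ using frame-indifference, with $R_h^\top\nabla_h u_h = \mathcal I_3 + G_h$ where $G_h$ is small in $L^2$ by Theorem~\ref{th3.1}; the expansion~(\ref{WTaylor}) reduces the leading-order energy to $\int Q_3(\e^{-1}x', G_h)$, and a standard argument (as in \cite{FJM2002}) identifies $h^{-1}G_h$ with the symmetrised limit built from the second fundamental form ${\rm II}(u)$ plus the corrector Hessian. Lower semicontinuity of the two-scale limit of $Q_3$, followed by the minimisation over $a\in\mathbb R^3$ defining $Q_2$ in~(\ref{58}) and over the corrector $\psi$ in~(\ref{2.49}), yields $\liminf_h h^{-2}E_h(u_h)\ge E_{\rm hom}^{\rm m}(u)$ on $\omega'$, and exhausting $\omega$ by such $\omega'$ finishes this half.

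For the \textbf{recovery sequence}, given $u\in H^2_{\rm iso}(\omega)$ I would use the classical ansatz of \cite{FJM2002} (see~(\ref{5.120})): set $u_h^{\rm rec}(x) = u(x') + h x_3 n(x') + h^2 d_h(x',x_3) + \text{(corrector terms)}$, where $n = \partial_1 u\wedge\partial_2 u$, augmented by an $\e$-periodic corrector carrying $\psi$. Precisely, I would take $\psi\in H^2_{\rm per}(Y)$ attaining the minimum in~(\ref{2.49}) for ${\rm II}={\rm II}(u)$; when $u$ (hence ${\rm II}$ and $\psi$) is merely $H^2$ one first approximates by smooth data and diagonalises at the end. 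The recovery field then has the schematic form
\[
u_h^{\rm rec}(x) = u(x') + h x_3 n(x') + \e^2\,\chi\!\Bigl(x',\tfrac{x'}{\e}\Bigr) + h x_3\,\e\,\beta\!\Bigl(x',\tfrac{x'}{\e}\Bigr) + h^2 x_3\,d\!\Bigl(x',\tfrac{x'}{\e}\Bigr) + \tfrac{h^2 x_3^2}{2}\,e(x'),
\]
where $\nabla_y^2\chi(x',\cdot)=\nabla_y^2\psi$ reproduces the prescribed corrector Hessian in the two-scale limit (this is where the condition $h\gg\e^2$ enters: the term $\e^2\chi$ contributes an $O(1)$ Hessian $\nabla_y^2\psi$ after differentiating twice and dividing by the effective length scale, while staying negligible at the level of the gradient), and $d$, $e$ are the through-thickness relaxation fields realising the minimiser $a$ in~(\ref{58}). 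A direct computation of $\nabla_h u_h^{\rm rec}$ shows $\nabla' u_h^{\rm rec}\to\nabla' u$ strongly in $L^2(\Omega)$ and that $(\nabla_h u_h^{\rm rec})^\top\nabla_h u_h^{\rm rec} = \mathcal I_3 + 2h\,(\text{strain}) + o(h)$ with the strain tensor having the right limit; plugging into~(\ref{WTaylor}) and using dominated convergence together with the uniform smallness of the remainder $r$ gives $h^{-2}E_h(u_h^{\rm rec})\to \frac1{12}\int_\omega Q_2(y,{\rm II}+\nabla_y^2\psi) = E_{\rm hom}^{\rm m}(u)$.

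The delicate technical points are, first, the matching of the relative scales: one must verify that with $\e^2\ll h\ll\e$ the corrector term $\e^2\chi(x',x'/\e)$ contributes exactly $\nabla_y^2\psi$ to the two-scale limit of $\nabla'^{\,2}u_h^{\rm rec}$ while contributing \emph{nothing} to the strong $L^2$-limit of $\nabla' u_h^{\rm rec}$, and that the cross term $h x_3\,\e\,\beta$ does not spoil the energy expansion — this is a bookkeeping of powers of $h$ and $\e$ that must be done carefully but is routine. Second, and more substantively, the approximation argument reducing general $u\in H^2_{\rm iso}$ to smooth isometric immersions (needed because $\chi$, $d$ must be built from ${\rm II}(u)$) requires a density result for smooth isometries, together with continuity of $u\mapsto E_{\rm hom}^{\rm m}(u)$ along the approximating sequence and a diagonal extraction; on a convex $\omega$ this is available, but the interplay between this density step and the recovery construction is the part I expect to demand the most care. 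The lower bound, by contrast, is comparatively automatic once Theorem~\ref{th2.1} is in hand, since the two-scale liminf inequality for the convex integrand $Q_3$ is standard.
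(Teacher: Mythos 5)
Your lower-bound argument is essentially the paper's: smooth $u_h$ via Theorem \ref{l3.2}, feed the mollified mid-surface into Theorem \ref{th2.1}, use frame indifference plus the expansion (\ref{WTaylor}), and close with convexity/lower semicontinuity. Two points you wave through as ``standard'' are worth naming, since they are where the actual work sits: because $h G_h$ is small only in $L^2$, the Taylor expansion can only be applied after truncating to the set $\{|hG_h|\le h^{1/2}\}$ and disposing of the remainder by a separate elementary lemma; and the two-scale limit of the scaled strain is not just $x_3({\rm II}+\nabla_y^2\psi)$ but $x_3({\rm II}+\nabla_y^2\psi)+H(x',y)$ with an unknown $x_3$-independent field $H$, which is discarded only because the cross term vanishes upon integration over $x_3\in I$ (this is also where the factor $\tfrac1{12}$ comes from). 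With those caveats, this half matches the paper.

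The genuine gap is in your recovery ansatz. First, the through-thickness relaxation is misplaced: in your formula the $y$-oscillatory field multiplies $h^2x_3$, which after applying $h^{-1}\partial_3$ produces a third-column strain contribution $h\,d(x',x'/\e)$ that is \emph{constant in $x_3$} (a membrane-type term, useless for lowering the bending energy), while the $\tfrac{h^2x_3^2}{2}$ term carries a $y$-\emph{independent} field $e(x')$. Hence the $x_3$-linear part of the limiting strain has third column $R^\top e(x')$ with no $y$-dependence, and the best energy your construction can attain is $\tfrac1{12}\int_\omega\min_{e\in\mathbb R^3}\int_Y Q_3\bigl(y,({\rm II}+\nabla_y^2\psi\,|\,e)\bigr)dy\,dx'$, which in general strictly exceeds $E^{\rm m}_{\rm hom}(u)$ because the minimiser $a$ in (\ref{58}) depends on $y$. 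The oscillation must sit on the quadratic-in-$x_3$ term, as in the paper's ansatz (\ref{4.84}) with $\tfrac{h^2x_3^2}{2}Rd_\e$, $d_\e=d(x',x'/\e)$. Second, your requirement that $\e^2\chi$ be ``negligible at the level of the gradient'' and that the cross terms are ``routine bookkeeping'' understates the structural constraint: the $O(\e)$ perturbation of $\nabla_h u_h^{\rm rec}$ is \emph{not} negligible relative to the order-$h$ strains the energy measures, so it must be an infinitesimal rotation, i.e.\ skew after factoring by $R=(\nabla'u\,|\,n)$. This forces $\chi=-\psi\,n$ (note that for a vector-valued $\chi$ the condition $\nabla_y^2\chi=\nabla_y^2\psi$ alone does not even determine the right object) and $\beta=\partial_{y_1}\psi\,\partial_1u+\partial_{y_2}\psi\,\partial_2u$, exactly the paper's $n_\e$; only with this choice is the metric defect $O(\e^2)$, cf.\ (\ref{4.87}), and the hypothesis $h\gg\e^2$ enters precisely to make that defect $o(h)$ — not to make the Hessian $O(1)$, which holds for any scaling. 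With an unstructured $\chi$ the strain is of order $\e\gg h$ and $h^{-2}E_h(u_h^{\rm rec})$ blows up. Your final density-plus-diagonalisation step (smooth isometries via Pakzad, then a diagonal sequence) is the same as the paper's and is fine.
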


\begin{proof}{\bf Lower bound.}  Since $\nabla_h u$ is close to a rotation-valued function $R_h$ we can use the frame indifference property of the stored energy function $W$ and its Taylor expansion near identity to linearise the functional. Roughly speaking, we will use the observation that 
\begin{equation}\label{4.63}
 W(y, \nabla_h u_h) = W(y, R_h^\top \nabla_h u_h) = W\bigl(y, {\mathcal I}_3 + (R_h^\top \nabla_h u_h - {\mathcal I}_3)\bigr) \approx Q_3(y, h G_h),
\end{equation}
where
\begin{equation*}
 G_h:= h^{-1}\bigl( R_h^\top \nabla_h u_h(x) - {\mathcal I}_3\bigr).
\end{equation*}
However, the last asymptotic equality in (\ref{4.63}) has to be made precise, since the smallness of the term $h G_h$ is not pointwise. We also need to understand the structure of the (weak) two-scale limit of $G_h$. To this end we will use a special representation of $u_h$. Let $\omega'$ be a domain whose closure is contained in $\omega$ and $\Omega'=\omega'\times I$. For small enough $h$ we can write $u_h$ in $\Omega'$ in the form
\begin{equation*}
\begin{gathered}
u_h(x) = \widetilde{u}_h(x') + h x_3 \widetilde{n}_h(x') + h z_h(x),
\end{gathered}
\end{equation*}
where $\widetilde u_h$ is the mollified mid-surface of $u_h$ defined in the previous section and $\widetilde{n}_h$ is the corresponding normal. 
Then one has
\begin{equation}\label{2.43}
\begin{gathered}
G_h = h^{-1} R_h^\top\bigl((\nabla'\widetilde{u}_h|\widetilde{n}_h) - R_h\bigr) +  x_3 R_h^\top(\nabla' \widetilde{n}_h|0) +  R_h^\top\nabla_h z_h.
\end{gathered}
\end{equation}
Notice that $G_h$ is bounded in $L^2(\Omega')$ by Theorem \ref{th3.1}. The first term on the right-hand side is bounded due to Proposition \ref{p3.3}. The second term on the right-hand side is bounded by  (\ref{2.41}). Hence, the third term is also bounded.

Since we perform the dimension reduction when passing to the limit, we are interested only in the behaviour of the top left $2\times 2$ block of the matrix $G_h$. Let us use the following notation: if $A\in \mathbb R^{3\times 3}$, then $\widehat A$ denotes the first two columns of $A$, and $\overline A$ is obtained from $A$ by omitting the third column and the third row. We have
\begin{equation}\label{4.67}
\begin{gathered}
\overline G_h = h^{-1} \widehat R_h^\top(\nabla'\widetilde{u}_h - \widehat R_h) +  x_3 \widehat R_h^\top\nabla' \widetilde{n}_h +  \widehat R_h^\top\nabla' z_h.
\end{gathered}
\end{equation}

The limit of the first and the third terms on the right-hand side of (\ref{4.67}) is of no importance for us apart from the fact that they do not depend on $x_3$. This is obvious in case of the first term. As for the third term, let $M_{Y\times I}\bigl(\mathcal T_\e(z_h)\bigr):=\int_{Y\times I}\mathcal T_\e(z_h)dydx_3$ and consider 
\begin{equation*}
 w_h := \e^{-1}\bigl(\mathcal T_\e(z_h) - M_{Y\times I}(\mathcal T_\e(z_h))\bigr).
\end{equation*}
It is easy to see that $\nabla_y w_h = \mathcal T_\e(\nabla' z_h)$ and $\partial_{3} w_h = \partial_{3} \e^{-1} \mathcal T_\e(z_h) = \e^{-1}h \mathcal T_\e(h^{-1}\partial_{3} z_h)$. Hence we have $\|\nabla_{y,x_3}w_h\|_{L^p(\Omega'\times Y)}\leq C$. Since $M_{Y\times I}(w_h) = 0$ we can use the Poincar\'e inequality $\|w_h\|_{L^p(\Omega'\times Y)}\leq C \|\nabla_{y,x_3}w_h\|_{L^p(\Omega'\times Y)}$. So $w_h$ is bounded in $L^p(\omega';W^{1,p} (Y\times I))$. Then, up to a subsequence, we have
\begin{equation*}
 w_h \rightharpoonup w(x,y) \mbox{ weakly in } L^p\bigl(\omega';W^{1,p} (Y\times I)\bigr).
\end{equation*}
On the other hand we know that $\partial_{3} w_h$ vanishes in the limit, hence $w$ is independent of $x_3$:
\begin{equation}\label{1.4}
 w=w(x',y).
\end{equation}
Therefore, we have
\begin{equation}\label{4.71}
 h^{-1} \widehat R_h^\top(\nabla'\widetilde{u}_h - \widehat R_h)  +  \widehat R_h^\top\nabla' z_h \wtto H(x',y)
\end{equation}
for some $H\in L^2(\omega'\times Y)$, up to a subsequence.

Now we examine the expression $(\widehat{R}_h)^\top \nabla' \widetilde{n}_h$. The element $(1,1)$ of this matrix can be written as
\begin{equation*}
\begin{gathered}
R_{h,1} \cdot \partial_1 \widetilde{n}_h = R_{h,1} \cdot \partial_{11} \widetilde{u}_h \wedge \partial_{2} \widetilde{u}_h + R_{h,1} \cdot \partial_{1} \widetilde{u}_h \wedge \partial_{12} \widetilde{u}_h =
\\
= \partial_{11} \widetilde{u}_h \cdot \partial_{2} \widetilde{u}_h \wedge R_{h,1} + \partial_{12} \widetilde{u}_h \cdot R_{h,1} \wedge \partial_{1} \widetilde{u}_h =
\\
= - \partial_{11} \widetilde{u}_h \cdot R_{h,3} + \partial_{11} \widetilde{u}_h \cdot (\partial_{2} \widetilde{u}_h -R_{h,2}) \wedge R_{h,1} + \partial_{12} \widetilde{u}_h \cdot R_{h,1} \wedge (\partial_{1} \widetilde{u}_h -  R_{h,1}),
\end{gathered}
\end{equation*}
where the vector product operations take priority over the inner products. Consider the second term in the last expression (the third term can be dealt in the same way). By Theorem \ref{th3.1} and Theorem \ref{l3.2} one has 
\[
\|\partial_{11}\widetilde{u}_h\|_{L^2(\omega')}\leq C,\ \ \ \|\partial_{2}\widetilde{u}_h - R_{h,2}\|_{L^2(\omega')}\leq C h'\ \ \ \|\partial_{2}\widetilde{u}_h - R_{h,2}\|_{L^\infty(\omega')}\leq C. 
\]
On the one hand, the second term is bounded in $L^2(\omega')$, and so converges weakly two-scale to some function from $L^2(\omega'\times Y)$ up to a subsequence; on the other hand, for any  $C^\infty_0(\omega'\times Y)$ test-function we get zero in the limit. We conclude that the weak two-scale limits of the second and the third terms are zero. We can apply analogous argument to the other elements of $(\widehat{R}_h)^\top \nabla' \widetilde{n}_h$. Therefore we see that $(\widehat{R}_h)^\top \nabla' \widetilde{n}_h$ and $-\nabla'^{\,2} \widetilde{u}_h \cdot R_{h,3}$ have the same weak two-scale limit (the ``dot'' product gives a $2\times 2$ matrix whose elements are $-\partial_{ij}  \widetilde{u}_h \cdot R_{h,3}$).   
Applying Theorem \ref{th2.1} to $\widetilde{u}_h$ one has 
\begin{equation*}
\begin{gathered}
\nabla'^{\, 2} \widetilde{u}_h \twconv \nabla'^{\, 2} u(x') + \nabla_y^2 \varphi(x',y)
\end{gathered}
\end{equation*}
up to a subsequence, for some $\varphi\in L^2(\Omega'; H^2_{\rm per}(Y))$, whereas $R_{h,3}$ converges strongly to $n$ due to Theorem \ref{th3.1}. Hence, 
\begin{equation}\label{4.74}
\begin{gathered}
(\widehat{R}_h)^\top \nabla' \widetilde{n}_h \twconv \big(-\nabla'^{\, 2} u + \nabla_y^2 \varphi\big)\cdot n = {\rm II} + \nabla_y^2 \psi,
\end{gathered}
\end{equation}
where $\psi:= - \varphi \cdot n \in L^2\bigl(\Omega'; H^2_{\rm per}(Y)\bigr)$. Putting together (\ref{4.71}) and (\ref{4.74}) we conclude that up to a subsequence
\begin{equation}\label{4.75}
\begin{gathered}
\overline{G}_h \twconv x_3\bigl({\rm II}(x') + \nabla_y^2 \psi(x',y)\bigr) + H(x',y).
\end{gathered}
\end{equation}

We can proceed to the last step of the proof. In order to use the Taylor expansion of $W$ we need to control $G_h$ pointwise. Let $\chi_h$ be a characteristic function of the subset of $\Omega'$ on which $h G_h$ is relatively small, namely,
\begin{equation}\label{4.76}
\chi_h := \left\{ \begin{array}{ll} 
				1 & \mbox{if } |h G_h| \leq h^{1/2},
				\\
				0 & \mbox{otherwise}.
                             \end{array}\right.
\end{equation}
It is easy to see that $\int_{\Omega'}(1-\chi_h)dx \to 0$. Since $W$ is non-negative we can restrict the integral to $\Omega'$ and discard the set of points on which $|h G_h| > h^{1/2}$ by using $\chi_h$:
\begin{equation}\label{4.77}
\begin{gathered}
 h^{-2}E_h(u_h) = h^{-2}\int\limits_{\Omega} W(\e^{-1}x', \nabla_h u_h) dx \geq  h^{-2}\int\limits_{\Omega'} W(\e^{-1}x', \chi_h\nabla_h u_h) dx
\\
=  h^{-2}\int\limits_{\Omega'} W\big(\e^{-1}x', \chi_h({\mathcal I}_3 + h G_h) \big)dx =  \int\limits_{\Omega'} Q_3(\e^{-1}x', \chi_h G_h )dx +  h^{-2}\int\limits_{\Omega'} r(\e^{-1}x', \chi_h h G_h)dx.
 \end{gathered}
\end{equation}
By the assumptions on $W$ we have $\big|r(\e^{-1}x', \chi_h h G_h)\big| \leq g(\chi_h |h G_h|^2)$ for some function $g(t) = o(t)$. An elementary lemma below implies that 
\begin{equation}\label{4.78}
h^{-2}\int\limits_{\Omega'} r\bigl(\e^{-1}x', \chi_h h G_h\bigr)dx\stackrel{h\to0}\longrightarrow0.
\end{equation}

\begin{lemma}
\label{simplelemma}
Suppose that $t^{-1}g(t)\to0$ as $t\to0,$ and let $\{f_t\}\subset L^1(\Omega')$ be a family of non-negative functions such that $t^{-1}\int_{\Omega'} f_t$ is bounded with respect to $t$ and $\|f_t\|_{L^\infty(\Omega')}\to0$ as $t\to0.$ Then the convergence
 \begin{equation*}
t^{-1}\int\limits_{\Omega'}g\bigl(f_t(x)\bigr)dx\stackrel{t\to0}\longrightarrow 0
 \end{equation*}
holds.
\end{lemma}
\begin{proof}
Notice that
 \begin{equation*}
  \int\limits_{\Omega'} \frac{g(f_t(x))}{t}dx = \int\limits_{\Omega'} \frac{g(f_t(x))}{f_t(x)} \frac{f_t(x)}{t} dx \leq \left(\sup_{s\in(0,\,\|f_t\|_{L^\infty(\Omega')})}\frac{g(s)}{s}\right)\, \int\limits_{\Omega'} \frac{f_t(x)}{t} dx\to 0.
 \end{equation*}
Here it is implied, with some abuse of notation, that $(f_t(x))^{-1}g(f_t(x)) = 0$ whenever $f_t(x)=0$. 
\end{proof}
We apply Lemma \ref{simplelemma} to the expression (\ref{4.78}) by setting $t:=h^2$, $f_t:= \chi_h |h G_h|^2$. Notice that the particular choice of the cut-off threshold in (\ref{4.76}) is not important: instead of $h^{1/2}$ one can take any threshold that goes to 
zero slower than $h.$ 

Now we estimate the quadratic term in (\ref{4.77}). By definition of the form $Q_2$ ant the properties of the unfolding operator we have
\begin{equation}\label{4.81}
\begin{gathered}
\int\limits_{\Omega'} Q_3(\e^{-1}x', \chi_h G_h )dx \geq \int\limits_{\Omega'} Q_2(\e^{-1}x', \chi_h \overline G_h)dx \geq  \int\limits_{\Omega'}\int\limits_Y Q_2\big(y, \mathcal T_\e(\chi_h \overline G_h)\big) dy dx 
 \end{gathered}
\end{equation}
It is well known that convex integral functionals are lower semi-continuous with respect to the weak convergence in functional spaces, see e.g. \cite{Dacorogna}. Assume that we deal with a weakly converging (as in (\ref{4.75})) subsequence $\overline G_h$. Then passing to the limit and recalling that $\chi_h$ converges strongly to $1$ we obtain the following inequality:
\begin{equation}
\begin{gathered}
\liminf_{h\to 0}\int\limits_{\Omega'}\int\limits_Y Q_2\big(y, \mathcal T_\e(\chi_h \overline G_h)\big) dy dx \geq \int\limits_{\Omega'}\int\limits_Y Q_2\big(y, x_3 ({\rm II}(x') + \nabla_y^2 \psi(x',y)) + H(x',y)\big) dy dx
 \\
 =\frac{1}{12}\int\limits_{\omega'}\int\limits_Y Q_2(y,{\rm II} + \nabla_y^2 \psi) dy dx' + \int\limits_{\omega'}\int\limits_Y Q_2(y,H) dy dx' 
 \\
 \geq\frac{1}{12}\int\limits_{\omega'}\int\limits_Y Q_2(y,{\rm II} + \nabla_y^2 \psi) dy dx'\geq \frac{1}{12}\int\limits_{\omega'}\int\limits_Y Q_{\rm hom}^{\rm m}({\rm II}) dy dx'.
 \end{gathered}
 \label{commonest}
\end{equation}
Note that in the second step in the above we integrate with respect $x_3\in I$, which causes the cross term of $x_3 ({\rm II} + \nabla_y^2 \psi)$ and $H$ in the quadratic form $Q_2$ to disappear. At the last step we just used the definition of $Q_{\rm hom}^{\rm m}$. 

A simple argument by contradiction shows that the inequality
\begin{equation}\label{4.83}
\begin{gathered}
\liminf_{h\to 0}\int\limits_{\Omega'}\int\limits_Y Q_2\big(y, \mathcal T_\e(\chi_h \overline G_h)\big) dy dx \geq  \frac{1}{12}\int\limits_{\omega'}\int\limits_Y Q_{\rm hom}^{\rm m}({\rm II}) dy dx'
 \end{gathered}
\end{equation}
holds for the whole sequence $\overline G_h$. Putting together (\ref{4.77}), (\ref{4.78}), (\ref{4.81}) and (\ref{4.83}) and noting that $\omega'$ is an arbitrary subset of $\omega$ concludes the proof of the lower bound.

\begin{remark}
The above proof applies to the general case of $h\ll\varepsilon,$ up to the last inequality in (\ref{commonest}). We will use this observation in our analysis of the lower bound in the case $h\ll\varepsilon^2$ in Section \ref{extremeenergy}.
\end{remark}

\medskip

{\bf Recovery sequence.} In order to build a recovery sequence one normally needs to ensure that the argument of the stored-energy density $W$ is an $L^\infty$ function, so that one can pass to the limit using the Taylor expansion (\ref{WTaylor}). If this condition is not in place one would have to estimate $W$ on the ``bad'' set where the argument might be uncontrollably large, which would require additional restrictions on $W$. For example in (\cite{FJM2002}) the authors used a truncation result for Sobolev maps, which consists of changing a function on some subset to make it together with its gradient essentially bounded and estimating the measure of this set. In this paper we will use a result of Pakzad from \cite{Pakzad} (which appeared after \cite{FJM2002}). It states that the space of infinitely smooth isometric immersions from a convex set $\overline\omega \subset \mathbb R^2$ into $\mathbb R^3$ is strongly dense in the Sobolev space of isometric immersion $H^2_{\rm iso}(\omega)$. 
Let $u\in H^2_{\rm iso}(\omega)\cap C^\infty(\overline\omega)$ and ${\rm II}$ be the matrix of the corresponding second fundamental form. Let also $\psi(x',y)$ and $d(x',y)$ be correspondingly a scalar and a vector functions from $C^\infty (\overline{\omega} \times \overline{Y})$ periodic with respect to $y$. 

Denote $\psi_\e:= \psi(x',\e^{-1}x')$, $d_\e:=d(x',\e^{-1}x')$ and consider the sequence
\begin{equation}\label{4.84}
\begin{gathered}
u_h = u -\e^2 \psi_\e n + h x_3 n_\e + h^2 \frac{x_3^2}{2} R d_\e,
\end{gathered}
\end{equation}
where $n:=\partial_1 u\wedge \partial_2 u$, $n_\e:=n + \e \partial_{y_1} \psi_\e \partial_1 u + \e \partial_{y_2} \psi_\e \partial_2 u$ and $R:=(\nabla'u|n)$. Calculating the rescaled gradient of $u_h$ we obtain
\begin{equation*}
\begin{gathered}
\nabla_h u_h = (\nabla'u -\e \nabla_y(\psi_\e n) | n_\e)+ h x_3 (\nabla'n + \nabla_y (\partial_{y_1} \psi_\e \partial_1 u + \partial_{y_2} \psi_\e \partial_2 u)|R d_\e) + U_h,
\end{gathered}
\end{equation*}
where $U_h$ denotes the rest of the terms:
\begin{equation*}
\begin{gathered}
U_h := - \e^2 (\nabla'(\psi_\e n) | 0)+ \e h x_3 (\nabla' (\partial_{y_1} \psi_\e \partial_1 u + \partial_{y_2} \psi_\e \partial_2 u)|0) + h^2 \frac{x_3^2}{2} \big((\nabla ' + \e^{-1} \nabla_y) (R d_\e)|0\big).
\end{gathered}
\end{equation*}
Due to our assumptions we see that $U_h = O(\e^2) = o(h)$ uniformly on $\omega$. A simple calculation shows that 
\begin{equation}\label{4.87}
\begin{aligned}
(\nabla'u -\e \nabla_y(\psi_\e n) | n_\e)^\top (\nabla'u &-\e \nabla_y(\psi_\e n) | n_\e) 
\\
= &{\mathcal I}_3+ \e^2 \left(\begin{array}{ccc}
		 (\partial_{y_1} \psi_\e)^2 & \partial_{y_1} \psi_\e \partial_{y_2} \psi_\e\ \ & 0
		 \\
		 \partial_{y_1} \psi_\e\partial_{y_2} \psi_\e\ \  & (\partial_{y_2} \psi_\e)^2 & 0
		 \\
		 0 & 0 & |\nabla_y \psi_\e|^2
                \end{array}
\right)
= {\mathcal I}_3 + o(h).
\end{aligned}
\end{equation}
This implies that there exists an infinitely smooth function $R_h$ with values in $\SO 3$ such that
\begin{equation*}
\begin{aligned}
R_h^\top (\nabla'u -\e \nabla_y(\psi_\e n) | n_\e) = {\mathcal I}_3 + o(h).
\end{aligned}
\end{equation*}
(It can be constructed explicitly via Gram--Schmidt orthonormalisation for the columns of the matrix $(\nabla'u -\e \nabla_y(\psi_\e n) | n_\e).$)

Further, consider the expression
\begin{equation*}
\begin{aligned}
G_h := h^{-1} (R_h^\top \nabla_h u_h - {\mathcal I}_3) = x_3 R_h^\top (\nabla'n + \nabla_y (\partial_{y_1} \psi_\e \partial_1 u + \partial_{y_2} \psi_\e \partial_2 u)|R d_\e) + o(1).
\end{aligned}
\end{equation*}
Notice that $R_h$ converges to $R$  uniformly on $\omega$, $\nabla_y^2 \psi_\e$ converges strongly two-scale to $\nabla_y^2 \psi,$ and $d_\e$  converges strongly two-scale to $d$. Hence, the convergence
\begin{equation}\label{4.90}
\begin{aligned}
G_h \stto x_3 ({\rm II} + \nabla_y^2 \psi \,|\, d)
\end{aligned}
\end{equation}
holds. Passing to the limit in
\begin{equation*}
\begin{gathered}
  h^{-2} E_h(u_h) = h^{-2}\int\limits_{\Omega} W(\e^{-1}x', \nabla_h u_h) dx =  \int\limits_{\Omega} Q_3(\e^{-1}x', G_h )dx +  h^{-2}\int\limits_{\Omega} r(\e^{-1}x', h G_h)dx
 \end{gathered}
\end{equation*}
we see that the remainder term vanishes since $G_h$ is uniformly bounded in $L^\infty (\omega)$, and for the quadratic term due to (\ref{4.90}) we have
\begin{equation*}
\begin{aligned}
\int\limits_{\Omega} Q_3(\e^{-1}x', G_h )dx = \int\limits_{\Omega\times Y} Q_3\bigl(y, \mathcal T_\e (G_h)\bigr)dx dy + \int\limits_{\Lambda_\e\times I} Q_3(\e^{-1}x', G_h )dx 
\\
\to \frac{1}{12} \int\limits_{\omega\times Y} Q_3\big(y,({\rm II} + \nabla_y^2 \psi \,|\, d) \big)dx' dy
 \end{aligned}
\end{equation*}
(for the definition of $\Lambda_\e$ see Appendix \ref{twoscaleappendix}). Thus 
\begin{equation}\label{4.93}
\begin{gathered}
 \lim_{h\to 0}  h^{-2} E_h(u_h) =  \frac{1}{12} \int\limits_{\omega\times Y} Q_3\big(y,({\rm II} + \nabla_y^2 \psi \,|\, d) \big)dx' dy.
 \end{gathered}
\end{equation}

We conclude the proof with a standard argument showing that the above assumption of smoothness of $u,$ $\psi,$ $d$ is not restrictive. Let $u$ be an arbitrary element of $H^2_{\rm iso}(\omega)$ and take $\psi(x',y)\in L^2(\omega; H^2_{\rm per}(Y))$ to be a solution to the minimisation problem in (\ref{2.49}) corresponding to the matrix of the second fundamental form of $u$, that is
\begin{equation*}
\begin{gathered}
Q_{\rm hom}^{\rm m}({\rm II})  = \int\limits_Y Q_2(y,{\rm II} + \nabla_y^2 \psi) dy.
 \end{gathered}
\end{equation*}
Let also $d(x',y)$ be such that
\begin{equation*}
\begin{gathered}
 Q_2(y,{\rm II} + \nabla_y^2 \psi) = Q_3\big(y,({\rm II} + \nabla_y^2 \psi \,|\, d) \big).
\end{gathered}
\end{equation*}
There exist sequences of $C^\infty$ functions $u_j$, $\psi_j$ and $d_j$ converging to $u$, $\psi$ and $d$ in the corresponding function spaces such that 
\begin{equation*}
\begin{gathered}
\left|\,\int\limits_{\omega\times Y} Q_3\big(y,({\rm II}_j + \nabla_y^2 \psi_j \,|\, d_j) \big) dx' dy - \int\limits_{\omega} Q_{\rm hom}^{\rm m}({\rm II}) dx' \right|\leq \frac{1}{j}.
\end{gathered}
\end{equation*}
For each $j$ define $u_h^{(j)}$ via formula (\ref{4.84}) with $u$, $\psi$ and $d$ replaced by $u_j$, $\psi_j$ and $d_j$. For each sequence $u_h^{(j)}$ the analogue of convergence (\ref{4.93}) holds. It follows that there exists a subsequence $h_j \to 0$ such that
\begin{equation*}
\begin{gathered}
\left| E_h(u_{h_j}^{(j)}) - \int\limits_{\omega} Q_{\rm hom}^{\rm m}({\rm II}) dx' \right|\leq \frac{2}{j}.
\end{gathered}
\end{equation*}
Hence $u_h^{\rm rec}:=u_{h_j}^{(j)}$ is the required recovery sequence.
\end{proof}

\section{A microscale isometry constraint on the limits of finite bending energy sequences in the ``supercritical'' regime $h\ll \e^2$}\label{s5}

During the process of constructing a recovery sequence we have encountered a problem in the case when $h\ll \e^2$. Namely, introducing the corrector terms $-\e^2 \psi_\e n$ and $\e \partial_{y_1} \psi_\e \partial_1 u + \e \partial_{y_2} \psi_\e \partial_2 u$ in (\ref{4.84}) incurs an error of order $\e^2$ ({\it c.f.} (\ref{4.87})) which becomes of order $h^{-1}\e^2$ when scaled in the subsequent argument. This error does not vanish if $h\ll \e^2$ or $h\sim \e^2$. In order to understand the nature of the problem we write a formal asymptotic expansion for a recovery sequence, see Appendix \ref{asympsection}. In the process of eliminating the errors of order $\e^2$ and higher, it transpires that $\psi_\e$ needs to satisfy some solvability condition which is equivalent to the zero-determinant condition (\ref{2.71}) below. As we show in the next theorem, this condition is intrinsic for the limits of finite bending energy sequences in the case $h\ll \e^2$. This is due to the fact that ${\rm II} +\nabla_y^2 \psi_\e$ approximates the matrix of the second fundamental form of the mollified mid-surface $\widetilde u_h$ of the plate (defined in Section \ref{compactness}) on $\e$-scale. It appears that $\widetilde u_h$ has to be almost isometric in the case $h\ll \e^2$ for the expression $(\nabla' \widetilde u_h)^\top \nabla' \widetilde u_h - {\mathcal I}_2$ to be of order $o(h).$ Noting that  the matrix of the second fundamental form of an isometric surface has zero determinant suggests the condition 
${\rm det}({\rm II} +\nabla_y^2 \psi)=0,$ where $\psi$ is the two-scale limit of the sequence $\psi_\varepsilon.$  

\begin{theorem}[Zero determinant condition for the two-scale limit isometric surface.]
\label{isometryconstraint}
Let $u_h$ be a finite bending energy sequence, $\widetilde{u}_h$ be the mollified mid-surface of the plate defined by (\ref{2.28}) and $\omega'\Subset\omega$. Assume that $h\ll \e^2$. Then the weak two-scale limit $\nabla'^2 u + \nabla_y^2 \varphi$ of $\nabla'^2 \widetilde{u}_h$ in $L^2(\omega'\times Y)$, $u\in H^2_{\rm iso}(\omega')$, $\varphi\in L^2(\omega';H^2_{\rm per}(Y))$, satisfies the ``zero determinant'' condition
\begin{equation}\label{2.71}
 \det({\rm II} +\nabla_y^2 \psi) = ({\rm II}_{11}+\partial_{y_1y_1}^2 \psi) ({\rm II}_{22}+\partial_{y_2y_2}^2 \psi) -  ({\rm II}_{12}+\partial_{y_1y_2}^2 \psi)^2 = 0,
\end{equation}
where $\psi: = - \varphi \cdot n$.
\end{theorem}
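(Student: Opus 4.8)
The plan is to show that the determinant of the matrix $\mathrm{II}+\nabla_y^2\psi$, which measures the Gaussian curvature of the ``micro-plate'' on the $\e$-scale, must vanish because the mollified mid-surface $\widetilde u_h$ is forced to be isometric up to an error of order $o(h)$ on that scale. The central object is the Gauss-type quantity
\[
\det\bigl(\nabla'^2\widetilde u_h\cdot R_{h,3}\bigr) \;=\; (\partial_{11}\widetilde u_h\cdot R_{h,3})(\partial_{22}\widetilde u_h\cdot R_{h,3})-(\partial_{12}\widetilde u_h\cdot R_{h,3})^2,
\]
which by (\ref{4.74}) and Theorem \ref{th2.1} has $-(\mathrm{II}+\nabla_y^2\psi)$ as (minus) its two-scale limit. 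If I can show this determinant converges \emph{strongly} two-scale, or at least that its two-scale limit equals $\det(\mathrm{II}+\nabla_y^2\psi)$, and simultaneously that the limit is zero, I am done.

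\textbf{Step 1: rewrite the determinant via compensated compactness.} First I would use the identity, valid for any smooth surface $v$, expressing the determinant of the second fundamental form as a divergence-type (null Lagrangian) expression in the first derivatives of $v$. Concretely, for a smooth $v:\omega'\to\R^3$ one has
\[
\det\bigl((\nabla'^2 v)\cdot n_v\bigr)\;=\;\partial_1 v\wedge\partial_2 v \cdot \bigl(\partial_{11}v\wedge\partial_{22}v-\partial_{12}v\wedge\partial_{12}v\bigr)\big/ |\partial_1 v\wedge\partial_2 v|,
\]
and, crucially, the numerator $\partial_1 v\wedge\partial_2 v\cdot(\partial_{11}v\wedge\partial_{22}v-\partial_{12}v\wedge\partial_{12}v)$ can be written as $\partial_1[\,\cdots\,]-\partial_2[\,\cdots\,]$, a sum of Jacobian-type (div-curl) terms that are quadratic in $\nabla'^2 v$ and $\nabla' v$. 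This is the form ``amenable to a compensated compactness argument'' referred to in the introduction. Applying this to $v=\widetilde u_h$, the point is that $\nabla'\widetilde u_h$ is bounded in $L^\infty$ and converges strongly (in $L^2$, by Proposition \ref{p3.3} and Theorem \ref{th3.1}, since $\nabla'\widetilde u_h\to\nabla' u$), while $\nabla'^2\widetilde u_h$ is bounded in $L^2$, so the div-curl structure lets me identify the two-scale limit of the (a priori only weakly convergent) products. This yields that $\det(\nabla'^2\widetilde u_h\cdot\widetilde n_h)$ has a well-defined two-scale limit equal to $\det(\mathrm{II}+\nabla_y^2\psi)$ (using also $\widetilde n_h\to n$ strongly and $R_{h,3}\to n$ strongly).

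\textbf{Step 2: show the limit is zero using $h\ll\e^2$.} Here I would estimate $(\nabla'\widetilde u_h)^\top\nabla'\widetilde u_h-\mathcal I_2$. From (\ref{addeq}) and Theorem \ref{th3.1} one has $\|\nabla'\widetilde u_h-\widehat R_h\|_{L^2}\le Ch$ with $\widehat R_h^\top\widehat R_h=\mathcal I_2$, and $\|\nabla'\widetilde u_h\|_{L^\infty}\le C$; hence $\|(\nabla'\widetilde u_h)^\top\nabla'\widetilde u_h-\mathcal I_2\|_{L^1}\le Ch$. Since the Gaussian curvature of a surface is controlled by second derivatives of the metric, the determinant of the second fundamental form of $\widetilde u_h$, integrated against a test function, is (after integration by parts) expressible through the metric defect $g_h:=(\nabla'\widetilde u_h)^\top\nabla'\widetilde u_h-\mathcal I_2$ and its derivatives. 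The oscillation here is on scale $\e$, so two $y$-derivatives correspond to two factors of $\e^{-1}$; the relevant quantity is therefore of order $\e^{-2}\|g_h\|\le C\e^{-2}h=o(1)$ precisely because $h\ll\e^2$. Making this rigorous is the delicate part: I would test $\det(\nabla'^2\widetilde u_h\cdot\widetilde n_h)$ against $\phi(x',x'/\e)$ with $\phi\in C_0^\infty(\omega'\times Y)$, use the null-Lagrangian/divergence form from Step 1 to move derivatives onto $\phi$ and onto $g_h$ (exploiting the Gauss equation, which relates the Brioschi determinant of $g_h$ to $\det\mathrm{II}$), and track the powers of $\e$. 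The contributions where a $y$-derivative falls on the test function carry a factor $\e^{-1}$ but are paired with one power of $h^{1/2}$ or $h$ from the metric defect $g_h$ or from $\nabla'\widetilde u_h-\widehat R_h$; the worst term is $O(\e^{-2}h)$, which vanishes.

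\textbf{Main obstacle.} The bookkeeping in Step 2 is where the real work lies: one must verify that every term produced by the integration by parts, after distributing the at most two fast derivatives, carries at least two net powers of $\e^{-1}$ \emph{and} a compensating power of $h$ coming from the smallness of $g_h=(\nabla'\widetilde u_h)^\top\nabla'\widetilde u_h-\mathcal I_2$ in $L^1$ or of $\nabla'\widetilde u_h-\widehat R_h$ in $L^2$, with no residual term of size $O(1)$. Morally this is forced by the Gauss \emph{Theorema Egregium} — the determinant of $\mathrm{II}$ is an intrinsic quantity, a second-order differential operator applied to the metric — so it \emph{must} be expressible purely through $g_h$ and hence be $O(\e^{-2}h)=o(1)$; turning this principle into a clean estimate, while simultaneously respecting the weak-convergence/div-curl identification of Step 1 so that the limit is genuinely $\det(\mathrm{II}+\nabla_y^2\psi)$ and not merely a weak-$\ast$ limit of determinants, is the technical heart of the proof.
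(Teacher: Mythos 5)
Your overall strategy (a div--curl identification of the limit of a determinant of second derivatives, plus the smallness of the metric defect combined with $h\ll\e^2$) is indeed the mechanism the paper uses, but two of your steps contain genuine gaps, and they are precisely the places you flag as ``delicate''. First, in Step 1 the identity you write is not correct: $\partial_{12}v\wedge\partial_{12}v\equiv 0$, and by the Lagrange identity $(\partial_{11}v\wedge\partial_{22}v)\cdot(\partial_1v\wedge\partial_2v)=(\partial_{11}v\cdot\partial_1v)(\partial_{22}v\cdot\partial_2v)-(\partial_{11}v\cdot\partial_2v)(\partial_{22}v\cdot\partial_1v)$ produces \emph{tangential} products, not $\det\bigl((\nabla'^2v)\cdot n_v\bigr)$; there is no exact null-Lagrangian form of the second-fundamental-form determinant for a non-isometric surface. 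More importantly, the matrix with entries $\partial_{ij}\widetilde u_h\cdot\widetilde n_h$ (or $\cdot R_{h,3}$) is not made of exact second derivatives, so after unfolding its rows are no longer curl-free/divergence-free in $y$: the defects involve terms like $\nabla'^2\widetilde u_h\cdot\nabla'\widetilde n_h$, bounded only in $L^1$, and the div--curl lemma does not apply directly. The paper sidesteps this by taking the determinant of the plain vector-valued Hessian, $\det\nabla'^2\widetilde u_h=\partial_{11}\widetilde u_h\cdot\partial_{22}\widetilde u_h-|\partial_{12}\widetilde u_h|^2$, whose unfolded fields $(\mathcal T_\e(\partial^2_{11}\widetilde u_h),\mathcal T_\e(\partial^2_{12}\widetilde u_h))$ and $(\mathcal T_\e(\partial^2_{22}\widetilde u_h),-\mathcal T_\e(\partial^2_{12}\widetilde u_h))$ are exactly curl-free and divergence-free in $y$, so compensated compactness identifies the limit as $\det(\nabla'^2u+\nabla_y^2\varphi)$, see (\ref{2.73})--(\ref{2.77}).

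Second, the bookkeeping claim in your Step 2 fails: the Gauss/Brioschi representation of $\det{\rm II}_h$ through the metric contains Christoffel-type terms \emph{quadratic in first derivatives} of $g_h$. These carry no compensating power of $h$ -- each factor is only bounded in $L^2$ and converges weakly two-scale to quantities like $\partial^2_{y_iy_j}\varphi\cdot\partial_ku\neq0$ -- so it is not true that ``the worst term is $O(\e^{-2}h)$''; in general these products have nontrivial limits involving the tangential part of $\nabla_y^2\varphi$, and your plan offers no mechanism to dispose of them. The paper's choice of identity,
\begin{equation*}
\det\nabla'^2\widetilde u_h=\partial^2_{12}(\partial_1\widetilde u_h\cdot\partial_2\widetilde u_h)-\tfrac12\bigl(\partial^2_{22}|\partial_1\widetilde u_h|^2+\partial^2_{11}|\partial_2\widetilde u_h|^2\bigr),
\end{equation*}
is exactly linear in the metric entries, so after unfolding the right-hand side is $\e^{-2}\partial_y^2$ of $O(h)$ quantities and the $O(h/\e^2)\to0$ estimate is immediate; the price is that the resulting limit identity $\det(\nabla'^2u+\nabla_y^2\varphi)=0$ is vector-valued, and converting it into the scalar statement (\ref{2.71}) about $\psi=-\varphi\cdot n$ requires the additional orthogonality $\partial_{y_i}\varphi\cdot R_i=0$, established in Proposition \ref{l2.5} via the cell Poincar\'e-type Lemma \ref{l2.6}, the bound $\|\nabla_y\mathcal T_\e(\nabla'\widetilde u_h)\|_{L^2}\leq C\e$ and the unit-vector identity $2(a-b)\cdot a=|a-b|^2$. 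Your proposal contains no substitute for this ingredient, and without it (or without the paper's exact linear identity) the limit you obtain cannot be reduced to $\det({\rm II}+\nabla_y^2\psi)=0$.
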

\begin{proof}
The proof is inspired by the general idea of the proof of the identity $\det {\rm II} =0$ for the $H^2$ isometric surfaces (see \cite{Pakzad}). In our setting, however, the mollified mid-surface $\widetilde u_h$  of the plate is not exactly isometric.  This, together with the two-scale nature of our problem, requires in-depth analysis on the microscale. 

The following identity is easily checked by a straightforward calculation (recall that $\widetilde{u}_h$ is a $C^\infty$ function):
\begin{equation*}
 \det \nabla'^2 \widetilde{u}_h = \partial^2_{12} (\partial_1 \widetilde{u}_h \cdot \partial_2 \widetilde{u}_h) - \frac{1}{2} \big(\partial^2_{22} (\partial_1 \widetilde{u}_h)^2 +\partial^2_{11} (\partial_2 \widetilde{u}_h)^2\big).
\end{equation*}
Note that we can consider the third order tensor $\nabla^2 \widetilde{u}_h$ as a $2\times 2$ matrix whose elements $\partial_{ij} \widetilde{u}_h$ are  vector-valued functions. Thus when we take its determinant we understand the multiplication of entries as the scalar product of vectors. We will use this convention henceforth. We can rewrite the above identity applying the unfolding operator:
\begin{equation}\label{2.73}
 \mathcal T_\e(\det \nabla^2 \widetilde{u}_h) = \e^{-2}\Big(\partial^2_{y_1y_2} \mathcal T_\e (\partial_1 \widetilde{u}_h \cdot \partial_2 \widetilde{u}_h) - \frac{1}{2} \big(\partial^2_{y_2^2} \mathcal T_\e (\partial_1 \widetilde{u}_h)^2 +\partial^2_{y_1^2} \mathcal T_\e (\partial_2 \widetilde{u}_h)^2\big)\Big).
\end{equation}
We consider the left-hand side of the last identity as a scalar product of two ``vectors'' $f_\e:=(\mathcal T_\e(\partial_{11}^2 \widetilde{u}_h), \mathcal T_\e(\partial_{12}^2 \widetilde{u}_h))^\top$ and $g_\e:=(\mathcal T_\e(\partial_{22}^2 \widetilde{u}_h), - \mathcal T_\e(\partial_{12}^2 \widetilde{u}_h))^\top$.  Since ${\rm curl_y} f_\e= 0$ and $\div_y g_\e=0$, we can apply the compensated compactness theorem, see {\it e.g.} \cite{JKO}, which implies that the left-hand side of (\ref{2.73}) *-weak converges to the scalar product of their weak limits, that is
\begin{equation*}
 \lim_{h\to 0} \int_{\omega'} \int_Y \mathcal T_\e(\det \nabla'^2 \widetilde{u}_h) \eta dy dx' =  \int_{\omega'} \int_Y\det (\nabla'^2 u + \nabla_y^2 \varphi) \eta dy dx'
\end{equation*}
for any $\eta \in C_0^\infty (\omega'\times Y)$. Note that $x'$ plays the role of a parameter, and does not affect our argument. Let us now consider the right-hand side of (\ref{2.73}). Due to Proposition \ref{p3.3} we have $\partial_i \widetilde{u}_h = R_{h,i} + O(h)$, $h\to 0$, $i=1,2$, where $O(h)$ is understood in terms of the $L^2$-norm. In particular, $\partial_1 \widetilde{u}_h \cdot \partial_2 \widetilde{u}_h = O(h)$, $(\partial_i \widetilde{u}_h)^2=1+O(h)$. Thus 
\begin{equation}\label{2.75}
\begin{gathered}
 \e^{-2}\Big(\partial^2_{y_1y_2} \mathcal T_\e (\partial_1 \widetilde{u}_h \cdot \partial_2 \widetilde{u}_h) - \frac{1}{2} \big(\partial^2_{y_2^2} \mathcal T_\e (\partial_1 \widetilde{u}_h)^2 +\partial^2_{y_1^2} \mathcal T_\e (\partial_2 \widetilde{u}_h)^2\big)\Big) 
 \\
 =\e^{-2}\Big(\partial^2_{y_1y_2} O(h) - \frac{1}{2} \big(\partial^2_{y_2^2} (1+O(h)) +\partial^2_{y_1^2}  (1+O(h))\big)\Big) 
 \\
 =\partial^2_{y_1y_2} O(h/\e^2)  + \partial^2_{y_2^2} O(h/\e^2) +\partial^2_{y_1^2}  O(h/\e^2).
\end{gathered}
\end{equation}
Multiplying the last expression in (\ref{2.75}) by an arbitrary function $\eta \in C_0^\infty (\omega'\times Y)$ and integrating by parts twice we see that it converges *-weakly to zero given that $h/\e^2 = o(1)$, {\it i.e.}
\begin{equation*}
 \int_{\omega'} \int_Y\det\bigl(\nabla'^2 u(x') + \nabla_y^2 \varphi(x',y)\bigr) \eta dy dx' = 0.
\end{equation*}
This implies
\begin{equation}\label{2.77}
\det\bigl(\nabla'^2 u(x') + \nabla_y^2 \varphi(x',y)\bigr) = 0.
\end{equation}

At the last step of our proof we need one additional property of the two-scale limit $\varphi(x',y)$. We know that the second-order derivatives of $u(x')$ are parallel to the vector $n(x'):$ one has 
\[
\partial_{ij}u\cdot R_k=\partial_i R_j\cdot R_k=\partial_jR_i\cdot R_k=\partial_i(R_j\cdot R_k)-R_j\cdot\partial_kR_i=-R_j\cdot\partial_kR_i=0,\ \ \ i,j,k\in\{1,2\},
\]
since at least two of the indices $i,j,k$ coincide. Notice further that  
\begin{equation}\label{103}
\begin{gathered}
  -\partial_{ij}u\cdot n=\partial_{1j}u\cdot(\partial_2u\wedge\partial_iu)+\partial_{2j}u\cdot(\partial_iu\wedge\partial_1u)
=\partial_iu\cdot(\partial_{1j}u\wedge\partial_2u)+\partial_iu\cdot(\partial_{1}u\wedge\partial_{2j}u)
\\
=\partial_iu\cdot\partial_j(\partial_{1}u\wedge\partial_2u)=\partial_iu\cdot\partial_j n={\rm II}_{ij},\ \ \ \ \ i,j\in\{1,2\},
\end{gathered}
\end{equation}
hence $\partial_{ij} u = - {\rm II}_{ij} n$. In particular, one has $\det \nabla'^2 u = -\det {\rm II}$. In order to derive (\ref{2.71}) from (\ref{2.77}) we need a similar property for $\varphi$ to be held. In fact, we require a much weaker property of $\partial_{y_i} \varphi$ being orthogonal to $R_i$ for $i=1,2$. We establish it with the help of the following statement.

\begin{proposition}\label{l2.5}
Let 
 \begin{equation*}
  z_\e(x,y):= \e^{-1}\Big(\e^{-1}\big(\mathcal T_\e(\widetilde{u}_h)-\mathcal M_Y(\mathcal T_\e(\widetilde{u}_h))\big) - \mathcal M_Y(\mathcal T_\e(\nabla' \widetilde{u}_h))\,y^c\Big),
 \end{equation*}
(cf. (\ref{2.56})). Then
 \begin{equation}\label{2.79a}
  \|\partial_{y_i} z_\e \cdot \mathcal T_\e (R_{h,i})\|_{L^1(\omega'\times Y)} \to 0 \mbox{ as } h\to 0,\,i=1,2.
 \end{equation}
\end{proposition}
In the proof of this proposition we will need the following simple inequality, which is a sort of Poincar\'e inequality.

\begin{lemma}\label{l2.6}
Let $f\in H^1(Y)$. Then 
 \begin{equation*}
\int\limits_{Y\times Y} |f(y) - f(\zeta)|^2 dy d\zeta\leq 2 \int\limits_{Y} |\nabla f(y)|^2 dy.
 \end{equation*}
\end{lemma}
\begin{proof}
 Since the set of $C^\infty(Y)$ functions is dense in $H^1(Y)$ we only need to prove the lemma for  $f\in C^\infty(Y)$. First we write down a pointwise estimate for $|f(y)-f(\zeta)|$.
 \begin{equation*}
 \begin{gathered}
    |f(y)-f(\zeta)| = \left|\int_\zeta^y \nabla f \cdot dl\right| = \left|\int_{\zeta_1}^{y_1} \partial_1 f(t,\zeta_2)dt + \int_{\zeta_2}^{y_2} \partial_2 f(\zeta_1,t) dt\right|\leq
    \\
\leq \int_0^1 (|\nabla f(t,\zeta_2)|+ |\nabla f(\zeta_1,t)| ) dt.
 \end{gathered}
 \end{equation*}
We finish the proof with
 \begin{equation*}
 \begin{gathered}
\int\limits_{Y\times Y} |f(y) - f(\zeta)|^2 dy d\zeta\leq   \int\limits_{Y\times Y} \int_0^1 (|\nabla f(t,\zeta_2)|^2 + |\nabla f(\zeta_1,t)|^2 ) dt dy d\zeta = 2  \int\limits_{Y} |\nabla f(y)|^2 dy.
 \end{gathered}
 \end{equation*}
\end{proof}

\begin{proof}[Proposition \ref{l2.5}]
Taking the gradient of the function $z_\e$ and using the properties of the unfolding operator we have 
\begin{equation*}
 \nabla_y z_\e = \e^{-1}\big(\mathcal T_\e(\nabla' \widetilde{u}_h)- \mathcal M_Y(\mathcal T_\e(\nabla' \widetilde{u}_h))\big).
\end{equation*}
Let us write the $L^2$-norm of the right-hand side (dropping the coefficient $\e^{-1}$) in a special form:
\begin{equation}\label{2.85}
 \begin{gathered}
  \|\mathcal T_\e(\nabla' \widetilde{u}_h)- \mathcal M_Y(\mathcal T_\e(\nabla' \widetilde{u}_h))\|_{L^2(\omega'\times Y)}^2 = \int\limits_{\omega'\times Y} \left|\mathcal T_\e(\nabla' \widetilde{u}_h)- \int\limits_Y \mathcal T_\e(\nabla' \widetilde{u}_h)d\zeta\right|^2 dy dx = 
\\
=\int\limits_{\omega'\times Y} \left|\int\limits_Y (\mathcal T_\e(\nabla' \widetilde{u}_h)(x,y)- \mathcal T_\e(\nabla' \widetilde{u}_h)(x,\zeta))d\zeta\right|^2 dy dx \leq
\\
\leq \int\limits_{\omega'\times Y\times Y} \left| \mathcal T_\e(\nabla' \widetilde{u}_h)(x,y)- \mathcal T_\e(\nabla' \widetilde{u}_h)(x,\zeta)\right|^2 d\zeta dy dx
 \end{gathered}
\end{equation}
Applying Lemma \ref{l2.6} to $\mathcal T_\e(\nabla' \widetilde{u}_h)$ we have
\begin{equation}\label{2.86}
 \begin{gathered}
\int\limits_{\omega'\times Y\times Y} \left| \mathcal T_\e(\nabla' \widetilde{u}_h)(x,y)- \mathcal T_\e(\nabla' \widetilde{u}_h)(x,\zeta)\right|^2 d\zeta dy dx \leq 2 \| \nabla_y\mathcal T_\e(\nabla' \widetilde{u}_h)\|_{L^2(\omega'\times Y)}^2 =
\\
= 2 \e^2\| \mathcal T_\e(\nabla'^2 \widetilde{u}_h)\|_{L^2(\omega'\times Y)}^2 \leq C\e^2,
 \end{gathered}
\end{equation}
where in the last inequality we used the property of boundedness of the second gradient of $\widetilde{u}_h$, see Theorem \ref{l3.2}. Due to Proposition \ref{p3.3} we can replace $\mathcal T_\e(\nabla' \widetilde{u}_h)$ in the last estimate by $\mathcal T_\e(\widehat R_h)$:
\begin{equation}\label{2.87}
 \begin{gathered}
\int\limits_{\omega'\times Y\times Y} \left| \mathcal T_\e(\widehat R_h)(x,y)- \mathcal T_\e(\widehat R_h)(x,\zeta)\right|^2 d\zeta dy dx \leq C\e^2.
 \end{gathered}
\end{equation}

Similarly to (\ref{2.85}) we can write the expression in (\ref{2.79a}) as
\begin{equation*}
\int\limits_{\omega'\times Y\times Y}\e^{-1} \big(\mathcal T_\e(\partial_i \widetilde{u}_h)(x,y)- \mathcal T_\e(\partial_i \widetilde{u}_h)(x,\zeta)\big) \cdot \mathcal T_\e (R_{h,i})(x,y) d\zeta dy dx.
\end{equation*}
As follows from Proposition \ref{p3.3}, in order to prove the statement it is enough to show that 
\begin{equation*}
\e^{-1} \int\limits_{\omega'\times Y\times Y}\big(\mathcal T_\e(R_{h,i})(x,y)- \mathcal T_\e(R_{h,i})(x,\zeta)\big) \cdot \mathcal T_\e (R_{h,i})(x,y) d\zeta dy dx \to 0,\,\,i=1,2.
\end{equation*}
Notice that the integrand in the latter is non-negative everywhere. The expression under the sign of integral has the form $(a-b)\cdot a$ at every point, where $a$ and $b$ are unit vectors. We use the following elementary identity, $2 (a-b)\cdot a = |a-b|^2$, together with (\ref{2.87}) to conclude the proof of the proposition:
 \begin{equation*}
 \begin{gathered}
\e^{-1} \int\limits_{\omega'\times Y\times Y}\big(\mathcal T_\e(R_{h,i})(x,y)- \mathcal T_\e(R_{h,i})(x,\zeta)\big) \cdot \mathcal T_\e (R_{h,i})(x,y) d\zeta dy dx  
\\
\quad\quad\quad\quad\quad\quad = \frac{1}{2} \e^{-1} \int\limits_{\omega'\times Y\times Y}\big|\mathcal T_\e(R_{h,i})(x,y)- \mathcal T_\e(R_{h,i})(x,\zeta)\big|^2 d\zeta dy dx \leq C\e.
\end{gathered}
\end{equation*}
 \end{proof}
 
Theorem \ref{th2.1} implies that the sequence  $z_\e$ converges weakly in $L^2\bigl(\omega'; H^2(Y)\bigr)$ to the function ({\it cf.} (\ref{triangle})) 
\begin{equation*}
 z=\frac{1}{2}(y_1^c)^2 \partial_1^2 u +  y_1^c y_2^c \partial_1\partial_2 u + \frac{1}{2}(y_2^c)^2 \partial_2^2 u + \varphi.
\end{equation*}
Since $\mathcal T_\e(R_h)$ converges to $R$ strongly in $L^2(\omega'\times Y)$, we have
\begin{equation*}
 \partial_{y_i} z_\e \cdot \mathcal T_\e(R_{h,i}) \rightharpoonup \partial_{y_i} z\cdot R_i,
\end{equation*}
*-weakly, {\it i.e.} with test functions taken from $C^\infty_0(\omega'\times Y)$. (In fact, with some effort it is possible to show weak convergence in $L^2(\omega'\times Y)$, however, *-weak convergence is sufficient for our purposes.) Further, since the second-order derivatives of $u$ are parallel to the vector $b$, we have
\begin{equation*}
\partial_{y_i} z\cdot R_i = \partial_{y_i} \varphi\cdot R_i.
\end{equation*}
Lastly, it follows from Proposition \ref{l2.5} that the *-weak limit of $\partial_{y_i} z_\e \cdot \mathcal T_\e(R_{h,i})$ equals zero, hence
\begin{equation*}
\partial_{y_i} \varphi\cdot R_i = 0, \,\, i=1,2.
\end{equation*}
The above orthogonality property implies that the derivatives of $\varphi$ can be represented in the form $\partial_{y_1} \varphi = (\partial_{y_1} \varphi\cdot R_2) R_2+(\partial_{y_1} \varphi\cdot n)n$, $\partial_{y_2} \varphi = (\partial_{y_2} \varphi\cdot R_1) R_1+(\partial_{y_2} \varphi\cdot n)n$. Differentiating further we get the following identities for the second derivatives of $\varphi$ (in particular we see that the mixed derivative is orthogonal to both $R_1$ and $R_2$):
\begin{equation*}
\begin{gathered}
\partial_{y_1}^2 \varphi = (\partial_{y_1}^2 \varphi \cdot R_2) R_2 + (\partial_{y_1}^2 \varphi \cdot n) n,
\\
\partial_{y_2}^2 \varphi = (\partial_{y_2}^2 \varphi \cdot R_1) R_1 + (\partial_{y_2}^2 \varphi \cdot n)n,
\\
\partial_{y_1}\partial_{y_2} \varphi = (\partial_{y_1}\partial_{y_2} \varphi \cdot n)n.
\end{gathered}
\end{equation*}
Substituting the latter and $\nabla'^2 u = -{\rm II}\, n$ into (\ref{2.77}) and recalling that $\psi = - \varphi \cdot n$ we obtain  (\ref{2.71}).
\end{proof}

\section{The structure of isometric immersions}\label{isometricimmersions} 
The idea of the construction of a recovery sequence for the supercritical regime $h\ll \e^2$, which is performed in the next section, relies on the properties of isometric surfaces from $H^2_{\rm iso}(\omega)$. There has recently been a number of works devoted to this topic, see \cite{Hornung2008}, \cite{Hornung2011}, 
\cite{MuellerPakzad}, \cite{Pakzad}. The results of Section \ref{s5} (as discussed at the beginning of Section \ref{extremeenergy}) imply a discontinuous dependence of the homogenised energy density function on the direction of the bending. This observation drastically reduces the number of manipulations one can perform with a surface in order to construct a recovery sequence. In particular, we can not use the result on approximation of $H^2_{\rm iso}(\omega)$ surfaces by infinitely smooth ones because this approximation does not preserve the orientation of the bending directions.  In our paper we will use mostly the results of \cite{Pakzad}, which we review below. We then give an example of an isometric surface for which the set of rational directions is irregular ({\it cf.} Theorem \ref{th2.8}), and prove a statement on regularisation of isometric surfaces required for the construction of a recovery sequence.

\subsection{\bf Auxiliary observations}

For any $u\in H^2_{\rm iso}(\omega)$ domain $\omega$ can be partitioned into ``bodies'' and ``arms'', according to the following definition.
\begin{definition}\label{def:leadingcurve}

A {\bf body} is a maximal subdomain of $\omega$ on which $u$ is affine and whose boundary contains at least three segments inside 
$\omega$. 

A differentiable curve $\Gamma:[0,l]\to \omega$ is referred to as a {\bf leading curve} if $u$ is affine when restricted to the lines normal 
to $\Gamma\bigl([0,l]\bigr),$ which we refer to as {\bf leading segments}. In the approximation procedure below we also use the term ``leading curve'' for the curve $\gamma:=(u\circ\Gamma)$ on the 
surface.


For a given leading curve $\Gamma,$ {\bf generatrices} are the lines tangent to the surface $u$ at the points of $\gamma$ and orthogonal to the latter.

An {\bf arm}  $\omega[\Gamma]$ is a subdomain of $\omega$ covered by leading segments corresponding to a leading curve $\Gamma.$ (See Remark \ref{armrepres} below.)  

\end{definition}

\begin{remark}
Note that a maximal subdomain of $\omega$ on which $u$ is affine and hose boundary contains at most two segments inside 
$\omega$ is always an arm or part of an arm.
\end{remark}

\begin{remark}
\label{armrepres}
Suppose that $\Gamma$ is a leading curve parametrised by its arclength, and denote by $T:=\Gamma'$ the tangent vector 
and by $N:=(-T_2,T_1)^\top$ the unit vector orthogonal to $\Gamma$. 
Then the corresponding arm is given by 
\begin{equation}\label{6.134}
 \omega[\Gamma] = \bigl\{\Gamma(t)+sN(t): s\in \mathbb{R}, t\in[0,l]\bigr\}\cap\omega.
\end{equation}
For each $t\in[0,l]$, the set $\bigl\{\Gamma(t)+sN(t): s\in \mathbb{R}\bigr\}\cap\omega$ is the corresponding leading segment. 
\end{remark}

Since leading curves are not uniquely defined, it is more reasonable to define the vector field $N$ on the whole $\omega[\Gamma]$ as a continuous vector field of unit vectors parallel to leading segments, and the vector field $T$ as the continuous unit vector field orthogonal to $N$. In particular, we have $T=\Gamma'$ for any leading curve $\Gamma$. In what follows we consider $N$ and $T$ either as vector fields $N(x')$, $T(x')$ or the normal $N(t)$ and the tangent $T(t)$ of the curve $\Gamma(t)$, depending on the context. We will say that the value of $T$ (equivalently, $N$) is {\bf rational} if the ratio of its components is rational. We will also say in this situation that the corresponding leading segment is rational.

\begin{remark}
It can be shown that: 

a) For any given body or arm the part of its boundary that is situated in the interior of $\omega$ is a union of straight segments whose vertices belong to $\partial\omega;$

b) As follows from \cite{Pakzad}, all leading curves are in the class $C^{1,1}_{\rm loc}.$ A combination of this observation with the result of \cite{MuellerPakzad} establishing that $H^2_{\rm iso}\subset C^1(\omega)$ implies the fact that leading curves $\gamma$ on the surface 
$u$ are in the class $C^1.$
\end{remark}

Since the leading segments do not intersect in $\omega$, the vector field $N$ (and, hence, $T$) is locally Lipschitz in $\omega$. In is not difficult to prove, using a geometric argument, the following lemma.

\begin{lemma}\label{l6.4}
 The local Lipschitz constant
 \begin{equation*}
  C_L(x_0'):= \limsup_{x'\to x_0'} \frac{|N(x')-N(x_0')|}{|x'-x_0'|}=\limsup_{x'\to x_0'} \frac{|T(x')-T(x_0')|}{|x'-x_0'|}, \qquad x_0'\in \omega
 \end{equation*}
satisfies the estimate
\begin{equation*}
 C_L(x_0') \leq \dist^{-1}(x_0',\,\partial \omega).
\end{equation*}
\end{lemma}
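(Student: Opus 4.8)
The plan is to exploit the geometry of the leading segments: they are pairwise disjoint chords of $\omega$, and the vector field $N$ is constant along each of them. Fix $x_0'\in\omega$ and set $\delta:=\dist(x_0',\partial\omega)$. Consider a nearby point $x'$; it lies on some leading segment $\ell$ (if $x_0'$ is interior to a body, $N$ is locally constant there and $C_L(x_0')=0$, so there is nothing to prove; likewise we only need consider $x'$ in an arm). The leading segment $\ell_0$ through $x_0'$ and the leading segment $\ell$ through $x'$ cannot cross inside $\omega$. If they are parallel then $N(x')=N(x_0')$ and again there is nothing to prove, so assume they meet at a point $p$ outside $\omega$ (or on $\partial\omega$). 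The key elementary observation is then: the angle $\theta$ between $N(x_0')$ and $N(x')$ equals the angle subtended at $p$ by the segment joining $x_0'$ to $x'$, and since $|p-x_0'|\ge\delta$ (the full chord $\ell_0$ has both endpoints on $\partial\omega$, and $p$ lies on the line through $\ell_0$ beyond one endpoint, hence at distance at least $\dist(x_0',\partial\omega)$ from $x_0'$), one gets
\[
|N(x')-N(x_0')| = 2\sin(\theta/2) \le \theta \le \frac{|x'-x_0'|}{|p-x_0'|}\cdot(1+o(1)) \le \frac{|x'-x_0'|}{\delta}\,(1+o(1))
\]
as $x'\to x_0'$. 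Dividing by $|x'-x_0'|$ and taking $\limsup$ yields $C_L(x_0')\le\delta^{-1}$. The identity $|N(x')-N(x_0')|=|T(x')-T(x_0')|$ in the statement of the lemma is immediate since $T$ is obtained from $N$ by a fixed rotation by $\pi/2$.

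First I would pin down precisely why $p$, the intersection point of the two lines supporting $\ell_0$ and $\ell$, satisfies $|p-x_0'|\ge\dist(x_0',\partial\omega)$: the segment $\ell_0$ is a chord of $\omega$ with both endpoints on $\partial\omega$, so moving from $x_0'$ along this line in either direction one stays in $\overline\omega$ for a distance at least $\dist(x_0',\partial\omega)$ before exiting; since $\ell$ and $\ell_0$ are disjoint inside $\omega$, the point $p$ is not in the open segment $\ell_0$, hence lies on the line through $\ell_0$ at parameter value outside the chord, i.e. at distance $\ge\dist(x_0',\partial\omega)$ from $x_0'$. Then I would write the elementary trigonometric estimate above (two rays from a common vertex $p$, with $x_0'$ and a point near $x'$ on them; the chord connecting them has length comparable to $|x'-x_0'|$ as $x'\to x_0'$, and the opening angle is at most this length divided by the smaller of the two distances to $p$, which is $\ge\delta$ up to lower-order corrections). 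Finally I would take the $\limsup$ and conclude, noting that the two $\limsup$ expressions agree because $T=RN$ with $R$ a fixed rotation.

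The main obstacle is the clean bookkeeping of the geometric configuration near $x_0'$: strictly speaking $x'$ need not lie on a leading segment that is ``close to parallel'' to $\ell_0$ — one must argue that, for $x'$ sufficiently close to $x_0'$, the leading segment $\ell$ through $x'$ is indeed nearly parallel to $\ell_0$ (otherwise $N$ would be discontinuous at $x_0'$, contradicting the already-established local Lipschitz continuity), so that the intersection point $p$ is far away and the small-angle approximations $2\sin(\theta/2)\le\theta$ and ``chord length $\approx |x'-x_0'|$'' are legitimate in the limit. This is where the non-intersection property of leading segments inside $\omega$ does the real work; everything else is routine planar trigonometry.
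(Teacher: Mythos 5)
The paper does not actually spell out a proof of Lemma \ref{l6.4}: it is stated with only the remark that it ``is not difficult to prove, using a geometric argument'', so there is no official argument to compare against step by step. Your proposal supplies precisely such a geometric argument, and it is correct: leading segments are full chords of $\omega$ (Remark \ref{armrepres}), distinct leading segments do not meet inside $\omega$, and $N$ is by definition a continuous unit vector field, which is what lets you identify the angle between the vectors $N(x'),N(x_0')$ with the small angle $\theta$ between the supporting lines; then $2\sin(\theta/2)\le\theta\le\sin\theta\,(1+o(1))$ together with $\sin\theta\le |x'-x_0'|\,/\,|p-x'|$ and $|p-x'|\ge|p-x_0'|-|x'-x_0'|\ge \dist(x_0',\partial\omega)-|x'-x_0'|$ yields the bound after taking the $\limsup$, and $|T(x')-T(x_0')|=|N(x')-N(x_0')|$ because $N$ is a fixed rotation of $T$. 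Two minor polish points. First, since $\omega$ is convex, the leading segment through a point is the \emph{entire} intersection of its supporting line with $\omega$; hence the intersection point $p$ of the two lines cannot lie in $\omega$ at all (it would belong to both chords, contradicting disjointness), and $|p-x_0'|\ge\dist(x_0',\partial\omega)$ follows at once, with no need to discuss which endpoint of the chord $p$ lies beyond. Second, justify the near-parallelism of the two segments for $x'$ close to $x_0'$ by the continuity of $N$, which is built into its definition in the paper, rather than by the ``already-established local Lipschitz continuity''; the latter phrasing risks sounding circular, since the lemma is exactly the quantitative form of that Lipschitz property.
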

This lemma implies, in particular, local Lipschitz continuous differentiability of leading curves.

Note that for any leading curve $\Gamma,$ unit tangent vectors of $\gamma$  
are also tangent to the surface $u: {\mathbb R}^2\to{\mathbb R}^3.$ Therefore, the surface $u$ is reconstructed on each arm from the knowledge of the corresponding leading curve
and the non-vanishing principal curvature. 
Indeed,  denote by $\kappa_\Gamma$ the curvature of $\Gamma,$ so that $\Gamma''=T'=\kappa_\Gamma N$. For a given $\gamma,$ we define a Darboux frame $(\tau, \nu, n)$ by the tangent $\tau = \gamma'$ to the curve $\gamma,$ the unit vector $\nu =(\nabla'u)N$ tangent to $u$ and orthogonal to $\tau$, and $n=\tau \wedge \nu$. This Darboux frame satisfies the system of equations (see {\it e.g.} \cite[p. 277]{Spivak}, 
\cite{Pakzad})
\begin{equation}\label{2.108}
\left(\begin{array}{c}
\tau'
\\
\nu'
\\
n'
\end{array}\right)= \left(\begin{array}{ccc}
0 & \kappa_\Gamma & \kappa
\\
-\kappa_\Gamma & 0 & 0
\\
-\kappa & 0 & 0
\end{array}\right)\left(\begin{array}{c}
\tau
\\
\nu
\\
n
\end{array}\right)            
\end{equation}
which has a unique solution subject to appropriate initial values. Given $\gamma(t)$ and $\nu(t)$ we can write $u|_{\omega[\Gamma]}$ as
\begin{equation}\label{6.136}
u=\gamma(t)+s\nu(t),
\end{equation}
where $s$ parametrises the generatrix corresponding to the value $t$ on the curve $\Gamma.$
\begin{remark}
 There is freedom in the choice of the sign of $\kappa_\Gamma$ and the direction of the vector field $N$. We make this choice in such a way 
 that the normal $n$ in the above Darboux frame coincides with the normal $n$ defined as $\partial_1 u\wedge\partial_2 u$ in the discussion preceding the current section.
\end{remark}

Let us write down the formulae for for the gradients $\nabla' u,\,\nabla'^2 u$ and the matrix ${\rm II}$ in terms of the leading directions and curvatures. Since $x'$ is related to $(t, s)$ via $x'(t,s)=\Gamma(t)+sN(t)$. We have
\begin{equation*}
 \frac{\partial (x_1,x_2)}{\partial(t,s)} = (\Gamma' + s N', N) = 
\left(
 \begin{array}{cc}
  {T}_1(1-s \kappa_\Gamma) & -{T}_2
  \\
  \tg_2  (1-s \kappa_\Gamma) & \tg_1
 \end{array}
\right),
\end{equation*}
and therefore
\begin{equation*}
 \frac{\partial(t,s)}{\partial (x_1,x_2)}= \left(
 \begin{array}{cc}
  {T}_1(1-s \kappa_\Gamma)^{-1} & \tg_2  (1-s \kappa_\Gamma)^{-1}
  \\
   -{T}_2 & \tg_1
 \end{array}
\right).
\end{equation*}
The above expression is well defined since $1-s \kappa_\Gamma$ is positive everywhere on $\o[\Gamma]$, {\it cf.} Lemma \ref{l6.4}. Then we can write the gradient and the second gradient of $u$ in terms of the geometry of the curves $\gamma$ and $\Gamma$:
\begin{equation}\label{145}
 \begin{gathered}
  \nabla' u = \big(T_1 \tau - T_2 \nu,\,T_2 \tau + T_1 \nu\big),
  \\
  \nabla'^2 u = \frac{\kappa}{1-s \kappa_\Gamma}\left(
 \begin{array}{cc}
  T_1^2\, n & T_1 T_2\, n
  \\
  T_1 T_2\, n &  T_2^2\, n
 \end{array}
\right).
 \end{gathered}
\end{equation}
Notice that ({\it cf.} (\ref{103}))
\begin{equation}\label{147}
 {\rm II} = -\frac{\kappa}{1-s \kappa_\Gamma}\left(
 \begin{array}{cc}
  T_1^2 &  T_1 T_2
  \\
  T_1 T_2 & T_2^2
 \end{array}
\right).
\end{equation}
Let us introduce the following notation
\begin{equation}\label{53}
 \widetilde{\rm II} = \left(
 \begin{array}{cc}
  T_1^2 &  T_1 T_2
  \\
  T_1 T_2 & T_2^2
 \end{array}
\right).
\end{equation}
As will be shown in the next section, the homogenised stored energy density $Q_{\rm hom}^{\rm sc}$ in the supercritical regime is in general a discontinuous function of its argument ${\rm II}$. More precisely, it is continuous with respect to the coefficient $-\kappa(1-s \kappa_\Gamma)^{-1}$, but discontinuous with respect to $\widetilde{\rm II}$ (the direction $T$). This suggests that one can modify the curvature $\kappa$ in a continuous way ({\it i.e.} without changing much the homogenised energy), however, a perturbation of the vector field $T$ may lead to uncontrollable changes in the homogenised energy.


\subsection{\bf Regularisation of irregular surfaces.}

The results of \cite{Pakzad} and \cite{MuellerPakzad} assert some regularity of isometric immersions $u\in H^2_{\rm iso}(\omega)$, namely, $u\in C^1(\omega)$ and its associated leading curves in $\omega$ have locally Lipschitz derivative. Nevertheless, isometric immersions can be quite irregular for the purpose of this paper. To illustrate this, we  give an example of a surface for which the set of ``rational'' directions for the corresponding leading curve is the union of a fat Cantor set and a set of zero measure.

{\bf Example.}
Let us recall the construction of a fat Cantor set. At the first step we remove from the middle of $[0,1]$ an open interval $B_1$ of length $1/4$ and denote the remaining set $A_1.$ At the next step from the middle of each the two subintervals of $A_1$ we remove open intervals of length $1/16$ each, the remaining set is denoted by $A_2$ and its complement by $B_2$ and so on: 
at the $n$-th step we remove $2^{n-1}$ disjoint intervals of length $1/2^{2n},$ whose union is a set of measure $1/2^{n+1}.$
Thus we have $\lambda_1(A) = \lambda_1(B) = \frac{1}{2}$, where $A:=\cap_n A_n$, $B:= \cup_n B_n$. By $\lambda_d$ we denote the Lebesgue measure in $\mathbb R^d$. The Fat Cantor Set $A$ is closed and is nowhere dense ({\it i.e.} contains no interior points). For each $n$ we define a leading curve via its unit tangent vector $T_n(t)$ given as follows. Let $\phi$ be a function that is continuously differentiable on the interval $[0,1],$ has zero mean and is such that $\phi(0)=\phi(1)=\phi'(0)=\phi'(1)=0$, {\it e.g.} 
$\phi(t)=\bigl(\sin(2\pi t)\bigr)^3,$ and set $A_0 = [0,1]$, $B_0 = \varnothing$, $T_0:=(1,0)^\top$. For each $n=1,2,\ldots,$ we define $T_n:= T_{n-1}$ on the set $[0,1]\setminus(B_n\setminus B_{n-1}),$ and on each component interval $I_{n,k}$ of the set $B_n\setminus B_{n-1}$ we define 
\[
T_n:= \Bigl(\sqrt{1 - \phi_{n,k}^2},\, \phi_{n,k}\Bigr)^\top,\ \ \ \ k=1,2,..., 
\]
where $\phi_{n,k} = \beta_n\phi(2^{2n} t - t_{n,k})$ is the scaled (by a coefficient $\beta_n,$ $\vert\beta_n\vert<1,$ and the length of the interval $1/2^{2n}$) and appropriately translated (by the left end $t_{n,k}$ of the interval $I_{n,k}$) version of $\phi$, which ``fits'' into this interval. We then integrate $T_n$ to obtain the curves $\Gamma_n$:
\begin{equation*}
 \Gamma_n(t):= \int\limits_0^t T_n, \,\, t\in[0,1].
\end{equation*}
Note that at each step of this procedure we ``redefine'' the vector $T_n$ only on the set $B_n\setminus B_{n-1}.$

We take $\beta_n=\beta/2^{2n}$, where $0<\beta<4\bigl(\max\vert\phi\vert\bigr)^{-1}$ is a constant that will  be specified later.
So far we have a sequence of curves $\Gamma_n$ with a bounded second derivative\footnote{A direct calculation shows that 
 \[
 \vert\Gamma_n''\vert\le\vert\phi'_{n,k}\vert(1-\phi_{n,k}^2)^{-1/2}=2^{2n}\beta_n\vert\phi'\vert(1-\beta_n^2\phi^2)^{-1/2}, 
 \]
 which {\it e.g.} for the choice $\phi(t)=\bigl(\sin(2\pi t)\bigr)^3,$ $t\in[0,1],$ and $\beta\le2\sqrt{2}$ gives the value $C=4\pi\sqrt{2/3}.$}: $\Gamma_n''\le C\beta.$ It has a subsequence that converges strongly in $H^1(0,1)$ to a curve $\Gamma\in H^2(0,1)$ with $|\Gamma''|\leq C\beta$. In particular, one has $\Gamma' = (1,0)^\top$ on the fat Cantor set defined above, and the subset of $B$ on which $\Gamma'$ takes rational directions has zero measure. Let $\omega:=[0,l]\times[0,1]$, where $l<1$ is defined by the right end of $\Gamma$, namely it is the first coordinate of the point $\Gamma(1)$. If we choose $\beta$ such that $|\Gamma''|\leq C\beta \leq ({\rm diam}(\omega))^{-1}$, the lines normal to $\Gamma$ do not intersect within 
 $\omega.$ (Clearly, one has ${\rm diam}(\omega)\le\sqrt{2},$ so the choice 
 $\beta\le1/(C\sqrt{2})$ satisfies the required condition.) Lastly, we can construct an isometric surface $u$ via procedure described above (see formulae (\ref{2.108}) and (\ref{6.136})) with some ``reasonable'' curvature $\kappa$, {\it e.g.} $\kappa\equiv 1$. For thus constructed $u$ the domain $\omega$ is the only arm and $\Gamma$ is its leading curve in the sense of Definition \ref{def:leadingcurve}.
 
  \bigskip
 
 The above example illustrates that for a leading curve $\Gamma(t)$ the set on which $T(t)$ has rational directions can not in general be represented as a union of intervals (up to a null set). In what follows we argue that one can replace an arbitrary isometric surface $u$ by a ``sufficiently regular'' isometric surface, in order to obtain an explicit construction of a recovery sequence. Note that one can not simply approximate $u$ by an infinitely smooth isometric surface, since $Q_{\rm hom}^{\rm sc},$ in general, is an everywhere discontinuous function of its argument. Thus the matrix $\widetilde{\rm II}$, see (\ref{53}), of the replacement surface should be different from the one of $u$ only on a relatively small set. First we need to recall some properties of isometric surfaces. We reformulate and complement some results of \cite{Pakzad} (Lemmas 3.8 and 3.9) in the following statement.
 \begin{theorem}\label{th6.5}
  Let $u \in H^2_{\rm iso}(\omega)$. For any $\delta>0$ there exists $v \in H^2_{\rm iso}(\omega)$ such that 
\[
 \lambda_2(\{x':\, u(x')\neq v(x')\})+\|u-v\|_{H^2(\omega)}\leq \delta,
\] 
the surface $v$ has a finite number of bodies and arms, each maximal connected region with non-zero mean curvature lies between two regions where $v$ is affine, the corresponding leading curves are Lipschitz differentiable and do not have common points with $\partial \omega$.
 \end{theorem}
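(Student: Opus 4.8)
The plan is to produce $v$ in two stages: first reduce to a surface with only finitely many bodies and arms by importing the structural results of \cite{Pakzad}, and then perform a \emph{curvature-only} perturbation on each arm which isolates the curved strips between flat buffer zones and detaches the associated leading curves from $\partial\omega$. For Stage~1, one uses the decomposition underlying the density theorem of \cite{Pakzad} together with its Lemmas~3.8 and~3.9: up to a set of measure $<\delta/3$ the domain $\omega$ splits into bodies $B_1,\dots,B_M$, arms $\omega[\Gamma_1],\dots,\omega[\Gamma_K]$ and a residual set of small measure, and one obtains $v_1\in H^2_{\rm iso}(\omega)$ which coincides with $u$ off a set of measure $<\delta/3$, satisfies $\|u-v_1\|_{H^2(\omega)}<\delta/3$, has only finitely many bodies and arms, and whose leading curves are in $C^{1,1}_{\rm loc}$ by Lemma~\ref{l6.4}. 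This already gives the ``finite number of bodies and arms'' claim with the measure and $H^2$ bounds for $2\delta/3$; it remains to modify $v_1$ on the arms, spending a further $\delta/3$, so that each connected curved region becomes a strip bounded by two flat regions and each leading curve stays away from $\partial\omega$.

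For Stage~2, fix an arm $\omega[\Gamma]$ of $v_1$ parametrised by $(t,s)\mapsto\Gamma(t)+sN(t)$ as in (\ref{6.136}), with Darboux frame obeying (\ref{2.108}) and nonvanishing principal curvature $\kappa=\kappa(t)$; recall from (\ref{145}) that $\nabla'^2 v_1=\tfrac{\kappa}{1-s\kappa_\Gamma}\,\widetilde{\rm II}$ with $\widetilde{\rm II}$ as in (\ref{53}) depending only on the direction field $T$. For $\eta>0$ the set $\{t:|\kappa(t)|>\eta\}$ is a countable union of open intervals, of which only finitely many have length $>\eta$; choose a smooth cut-off $\chi_\eta$ on $[0,l]$ equal to $1$ on slightly shrunk copies of those finitely many long components, vanishing on all other components, on $\{|\kappa|\le\eta\}$, and near the endpoints $0$ and $l$, and set $\kappa_\eta:=\chi_\eta\kappa$. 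Then $\kappa_\eta$ is supported in finitely many intervals each flanked by its own zeros, and $\|\kappa-\kappa_\eta\|_{L^2(0,l)}\to0$ as $\eta\to0$, since the symmetric difference of the supports has measure $\to0$. Reconstructing $v$ on $\omega[\Gamma]$ from the \emph{same} leading curve $\Gamma$ (hence the same $T$, $N$, $\kappa_\Gamma$, the same arm and leading segments) and the curvature $\kappa_\eta$ via (\ref{2.108})--(\ref{6.136}), a Gr\"onwall estimate on (\ref{2.108}) gives $\|(\tau,\nu,n)_v-(\tau,\nu,n)_{v_1}\|_{L^\infty}\le C\|\kappa-\kappa_\eta\|_{L^1}$, hence $\|v-v_1\|_{H^1(\omega[\Gamma])}$ small, while (\ref{145}) together with $1-s\kappa_\Gamma>0$ (Lemma~\ref{l6.4}) yields $\|\nabla'^2(v-v_1)\|_{L^2(\omega[\Gamma])}\le C\|\kappa-\kappa_\eta\|_{L^2}$, so $\|v-v_1\|_{H^2(\omega[\Gamma])}$ is as small as desired. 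Carrying this out on each of the finitely many arms and leaving the bodies untouched (compensating the finitely many accumulated frame defects, themselves $O(\|\kappa-\kappa_\eta\|_{L^1})$, by rigid motions so that $v$ remains globally continuous with matching gradient across the arm/body junctions) produces the global $v\in H^2_{\rm iso}(\omega)$, whose maximal connected regions of nonzero mean curvature are precisely the images of the finitely many support intervals of the $\kappa_\eta$, each sitting between two flat regions by construction; note that $\widetilde{\rm II}$, i.e.\ the direction field, is unchanged wherever $\kappa_\eta=\kappa$, which is essential for the use of this theorem in Section~\ref{extremeenergy}.

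Finally, since each $\kappa_\eta$ vanishes near $t=0$ and $t=l$, its curved part is supported in a compact subinterval of $(0,l)$ and the corresponding curved strip has positive distance to $\partial\omega$; reselecting the leading curve of each curved strip as a curve transversal to the leading segments and lying in $\{x'\in\omega[\Gamma]:\dist(x',\partial\omega)>c\}$ (possible for $c$ small, after shrinking the support of $\kappa_\eta$ slightly more if needed) detaches all leading curves from the boundary, and compact containment upgrades the $C^{1,1}_{\rm loc}$ regularity from Lemma~\ref{l6.4} to genuine Lipschitz differentiability. I expect the main obstacle to be exactly the global gluing in Stage~2: replacing the curvature arm-by-arm perturbs $v_1$ on the \emph{whole} downstream of each modification (the Darboux system is a forward evolution), so one must keep the accumulated rotation and translation defects under control and absorb them by rigid motions of entire downstream components without re-introducing infinitely many pieces or enlarging $\{u\neq v\}$ beyond measure $\delta/3$; all remaining steps are either quotations of \cite{Pakzad} or routine ODE and Gr\"onwall estimates.
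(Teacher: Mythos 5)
Your Stage~1 is essentially what the paper itself does: the finite decomposition into bodies and arms, together with the measure and $H^2$ bounds, is imported from the proofs of Lemmas 3.8 and 3.9 of \cite{Pakzad}, and the local Lipschitz control of the direction field comes from Lemma \ref{l6.4}. The genuine gap is in Stage~2, and it is precisely the obstacle you flag at the end without resolving. Reconstructing an arm from the Darboux system (\ref{2.108}) with the truncated curvature $\kappa_\eta=\chi_\eta\kappa$ is a forward evolution in $t$: past the first interval on which $\kappa_\eta\neq\kappa$, the frame $(\tau,\nu,n)$ and the point $\gamma$ differ from those of $v_1$ by a small but in general nonzero rotation and translation. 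Hence on everything downstream of the first modification --- including the portions where $\kappa_\eta=\kappa$, and every body or arm glued further along --- the new surface agrees with $v_1$ only up to a nontrivial rigid motion, so pointwise $v\neq v_1$ (hence $v\neq u$) there. A rigid defect that is $O(\|\kappa-\kappa_\eta\|_{L^1})$ in norm is fine for the $H^2$ bound and even preserves $\rm II$ and the direction field, but it destroys pointwise equality, so $\lambda_2(\{u\neq v\})$ comes out of order $\lambda_2(\omega)$ rather than $\le\delta$, and the first requirement of the theorem fails. To remove the defect you would have to choose the modification so that the endpoint frame and the endpoint position of $\gamma$ across each modified interval are reproduced exactly --- a nonlinear matching problem for (\ref{2.108}) that a plain cutoff does not solve and that you do not address. (A secondary unproven point: that the total measure of the discarded short components of $\{|\kappa|>\eta\}$ tends to zero as $\eta\to0$.)

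The paper avoids global curvature surgery altogether, which is why it has no gluing problem. After the finite decomposition, the only defect left to repair is local: if a maximal curved region $\Sigma$ has just one affine region attached (or $\Sigma=\omega$), the lengths of the leading segments covering $\Sigma$ may tend to zero at a boundary point, which is exactly what obstructs the Lipschitz bound of Lemma \ref{l6.4} and forces leading curves towards $\partial\omega$. The repair is to cut along a leading segment close to that boundary point and replace $u$ on the small piece beyond the cut by its affine extension; since $\nabla' u$ is constant along a leading segment, the modified map is still in $H^2_{\rm iso}(\omega)$, it differs from $u$ only on that small piece (no downstream effect, no rigid motions), and the replaced piece provides the second flanking affine region. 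On the remaining curved regions the segments have length bounded below, so Lemma \ref{l6.4} yields leading curves with Lipschitz derivative avoiding $\partial\omega$. If you want to salvage your write-up, replace Stage~2 by this local affine replacement; the curvature cutoff, the Gr\"onwall estimates and the rigid-motion bookkeeping are then unnecessary.
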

The above properties of $v$ are partly derived from the proofs of Lemmas 3.8 and 3.9 in \cite{Pakzad}. Next, we briefly elucidate them. Every maximal connected region (we denote it by $\Sigma$) where $u$ has non-zero mean curvature is attached to either one or two regions where $u$ is affine. If $\Sigma$ is situated between two regions where $u$ is affine, then the lengths of the leading segments that cover $\Sigma$ are bounded below by a positive constant. Lemma \ref{l6.4} implies that on any closed subset of $\omega$ the vector field $T$ is  Lipschitz continuous. In particular, we can cover $\Sigma$ by a finite number of leading curves so that they do not have common points with the boundary of $\omega$, and hence have Lipschitz continuous derivative. If $\Sigma$ has just one ``affine'' region attached to it or $\Sigma = \omega$, then it may happen that the infimum of the length of leading segments covering $\Sigma$ is zero. Then one can not control the Lipschitz constant on 
the sequence of segments which lengths tend to zero (such sequence will converge to a point on the boundary of $\omega$). In this case we can replace $u$ in a small region near the boundary point where such degeneration of leading segments occurs by an affine function making sure that the new surface is still an element of $H^2_{\rm iso}(\omega)$. Then we get to the situation discussed previously.
 
As was mentioned at the beginning of this section we aim at the construction of recovery sequences for the homogenised functional $E_{\rm hom}^{\rm sc}$ in the supercritical case $h\ll\e^2$, given by (\ref{63}). Theorem \ref{th6.5} implies, in particular, that for $\delta>0$ there exists $v \in H^2_{\rm iso}(\omega)$ with a finite number of bodies and arms that depends on $\delta$ so that
 \begin{equation*}
 \bigl|E_{\rm hom}^{\rm sc}(u) -  E_{\rm hom}^{\rm sc}(v)\bigr|\leq\delta.
\end{equation*}

The next theorem ensures that we can replace $u \in H^2_{\rm iso}(\omega)$ with an isometric surface such that the set on which its leading segments are rational can be represented as a union of finite number of arms and a set of arbitrary small measure. 
\begin{theorem}\label{th6.6}
 Let $u \in H^2_{\rm iso}(\omega)$ be such that the subset $\omega_r(u)$ of $\omega$ covered by rational leading segments has positive measure. Then for each $\delta>0$ there exists a surface $u_\delta\in H^2_{\rm iso}(\omega)$ such that: 
 
 a) The surface $u_\delta$ has a finite number of bodies and arms, which we generically denote by $\omega[\tilde\Gamma];$ 
 
 b) The subset $\omega_r(u_\delta)$ of $\omega$ covered by rational leading segments can be represented as  
 \[
 \omega_r(u_\delta) = \Big(\bigcup_{\tilde\Gamma_r}\omega[\tilde\Gamma_r]\Big) \cup S_\delta,
 \]
where each arm $\omega[\tilde\Gamma_r]$ consists of parallel rational segments (in particular, each leading curve $\tilde\Gamma_r$ is a straight segment and $u(\omega[\tilde\Gamma_r])$ is cylindrical); 
 
 c) The set $S_\delta$ is measurable; 
 
 d) The estimate
 \begin{equation}\label{134}
 \lambda_2(S_\delta)+ \|u-u_\delta\|_{H^2(\omega)} + |E_{\rm hom}^{\rm sc}(u) -  E_{\rm hom}^{\rm sc}(u_\delta)|<\delta
\end{equation}
holds. 

Moreover, $u_\delta$ can be chosen in such a way that on each arm the corresponding curvature $\kappa$ ({\it cf.} (\ref{2.108}) and (\ref{147})) is infinitely smooth and ${\rm II}_\delta\in L^\infty(\omega)$, where the latter in the matrix of the second fundamental form of $u_\delta$.
\end{theorem}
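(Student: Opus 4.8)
The plan is to build $u_\delta$ in two stages: first reduce to a surface with finitely many bodies and arms via Theorem \ref{th6.5}, then perform a local ``straightening'' surgery on the arms that converts the bulk of every family of rational leading segments into a genuine cylindrical arm. So, I would fix $\delta>0$ and apply Theorem \ref{th6.5} to obtain $v\in H^2_{\rm iso}(\omega)$ with finitely many bodies and arms, with leading curves that are Lipschitz differentiable and keep a positive distance from $\partial\omega$, and with $\lambda_2(\{u\neq v\})+\|u-v\|_{H^2(\omega)}+|E_{\rm hom}^{\rm sc}(u)-E_{\rm hom}^{\rm sc}(v)|<\delta/4$. On each arm $\omega[\Gamma]$ of $v$ I pass to the coordinates $(t,s)$ with $x'=\Gamma(t)+sN(t)$, so that by (\ref{147}) one has ${\rm II}=-\kappa(t)\bigl(1-s\kappa_\Gamma(t)\bigr)^{-1}\widetilde{\rm II}(T(t))$ with $\widetilde{\rm II}$ as in (\ref{53}), and I decompose the rational parameters as a countable disjoint union $\bigcup_q E_q$, $E_q:=\{t:\ T(t)=q\}$, over rational directions $q$. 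Each $E_q$ is closed, and since $T$ is Lipschitz (Lemma \ref{l6.4}) and constant on $E_q$, one has $\kappa_\Gamma=0$ a.e.\ on $E_q$; hence at every density point $t$ of $E_q$, ${\rm II}_v=-\kappa(t)\widetilde{\rm II}(q)$. The leading segments over $\bigcup_q E_q$ and over all arms make up the set $\omega_r(v)$ of finite two-dimensional measure, so finitely many directions $q_1,\dots,q_M$ already carry all but a subset of measure $<\delta/4$ of $\omega_r(v)$; that remainder, together with the affine regions, is absorbed into $S_\delta$.

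For the surgery I fix a small $\epsilon>0$. By the Lebesgue density theorem and a Vitali covering argument I cover all but an $\epsilon$-fraction of $\lambda_1(E_{q_i})$ by finitely many closed intervals $[\alpha,\beta]$ with $\alpha,\beta\in E_{q_i}$ and $\lambda_1\bigl([\alpha,\beta]\setminus E_{q_i}\bigr)<\epsilon(\beta-\alpha)$; shrinking slightly, I may take the union of all these intervals (over all $i$ and all arms) to be a pairwise separated family, losing only $O(\epsilon)$ of measure. On each such interval I replace $\Gamma$ by the straight segment $t\mapsto\Gamma(\alpha)+(t-\alpha)q_i$ while keeping the surface curvature $\kappa$; since $\kappa_\Gamma\equiv0$ there, the Darboux system (\ref{2.108}) forces the tangent $\nu$ to be constant, so the corresponding piece of surface is a cylinder ruled by parallel rational generatrices, i.e.\ a legitimate cylindrical arm. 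The position mismatch $d=\int_\alpha^\beta(q_i-T)\,dt$ satisfies $|d|\le 2\,\lambda_1([\alpha,\beta]\setminus E_{q_i})\le C\epsilon(\beta-\alpha)$, and I absorb it inside a short adjacent transition zone (whose length, hence two-dimensional measure, tends to $0$ with $\epsilon$) so that past that zone the leading curve again coincides with $\Gamma$ and the remaining surgeries are unaffected. All inserted curvatures stay bounded and the arm keeps a positive distance from $\partial\omega$, so by Lemma \ref{l6.4} the leading segments still do not intersect; the resulting surface $u^{(1)}$ therefore lies in $H^2_{\rm iso}(\omega)$ and has finitely many bodies and arms. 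The cylindrical arms $\omega[\tilde\Gamma_r]$ are the straightened pieces; $\omega_r(u^{(1)})$ equals their union together with a set $S_\delta$ made up of the leading segments over the discarded small directions, the uncovered parts of the $E_{q_i}$, and the transition zones, so $\lambda_2(S_\delta)<\delta/4+O(\epsilon)$. It is Borel since the leading direction field of $u^{(1)}$ is continuous, whence $\{x':\ T(x')\ \text{rational}\}$ is a countable union of closed sets.

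It remains to bound the errors and arrange the ``moreover'' part. On a straightened interval, in $(t,s)$ coordinates, the Darboux system for $u^{(1)}$ differs from that for $v$ only through the forcing $\kappa_\Gamma$, whose $L^2(\alpha,\beta)$-norm is $O\bigl(\sqrt{\lambda_1([\alpha,\beta]\setminus E_{q_i})}\bigr)$ by boundedness of $\kappa_\Gamma$ (Lemma \ref{l6.4}); matching the data at $t=\alpha$ and a Gronwall-type comparison give $\|v-u^{(1)}\|_{H^2}\to0$ on those pieces as $\epsilon\to0$, while the transition zones and the sets $[\alpha,\beta]\setminus E_{q_i}$ have small two-dimensional measure with ${\rm II}$-data bounded in $L^1$, so their contribution likewise tends to $0$. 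For the energy I use the crucial observation that on $E_{q_i}$ itself ${\rm II}_{u^{(1)}}=-\kappa(t)\widetilde{\rm II}(q_i)={\rm II}_v$ \emph{identically}; hence the everywhere-discontinuity of $Q_{\rm hom}^{\rm sc}$ with respect to the direction $\widetilde{\rm II}$ never enters, and, writing $E_{\rm hom}^{\rm sc}$ in the $(t,s)$ coordinates with Jacobian $1-s\kappa_\Gamma$, the difference $|E_{\rm hom}^{\rm sc}(v)-E_{\rm hom}^{\rm sc}(u^{(1)})|$ is supported on the small sets $[\alpha,\beta]\setminus E_{q_i}$ and the transition zones, hence controlled by absolute continuity of $\int_\omega Q_{\rm hom}^{\rm sc}({\rm II})\,dx'<\infty$. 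Finally, on each arm I replace the $L^2$ curvature $\kappa$ by a smooth approximant $\kappa^{\rm sm}$ with $\|\kappa-\kappa^{\rm sm}\|_{L^2}$ as small as desired, obtaining $u_\delta$: this changes neither the leading curves nor their directions, makes ${\rm II}_\delta\in L^\infty(\omega)$, and—since, by the discussion following (\ref{53}), $Q_{\rm hom}^{\rm sc}$ \emph{is} continuous in the coefficient $-\kappa(1-s\kappa_\Gamma)^{-1}$ for a fixed direction—alters $E_{\rm hom}^{\rm sc}$ by an arbitrarily small amount (dominated convergence). Choosing $\epsilon$ and the smoothing small enough that each of the four contributions is $<\delta/4$ yields (\ref{134}). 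The main obstacle is the middle step: realising the straightened-plus-transition curve as a genuine element of $H^2_{\rm iso}(\omega)$ (non-intersecting leading segments, staying inside $\omega$, matching the unmodified part in a $C^1$ fashion) while keeping all the errors small; what makes this tractable is that the energy is preserved \emph{exactly} on each $E_{q_i}$, which is precisely what neutralises the discontinuity of $Q_{\rm hom}^{\rm sc}$.
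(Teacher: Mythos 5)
Your overall plan coincides with the paper's (reduce via Theorem \ref{th6.5}, keep finitely many rational directions, straighten the leading curve over intervals whose endpoints lie in the chosen rational preimage set, keep the surface curvature, smooth $\kappa$ at the end), but the central step of the surgery is asserted rather than carried out, and as described it fails. Replacing $\Gamma|_{[\alpha,\beta]}$ by $t\mapsto\Gamma(\alpha)+(t-\alpha)q_i$ with the same arclength parameter produces not only the endpoint mismatch you mention but a parametrisation overshoot: since $q_i\cdot\bigl(\Gamma(\beta)-\Gamma(\alpha)\bigr)=\int_\alpha^\beta q_i\cdot T\,dt<\beta-\alpha$ whenever $T\not\equiv q_i$ on $[\alpha,\beta]$, the straightened piece runs past the bounding segment through $\Gamma(\beta)$, so the family of parallel leading segments it generates overlaps the unmodified segments beyond $\beta$; the deformation is then not well defined without a reparametrisation. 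You delegate exactly this difficulty to a ``short adjacent transition zone'', but that zone is the whole problem: to absorb a transverse displacement $|d|$ over a short parameter length the transition curve must have curvature of order $|d|$ divided by the square of that length, which threatens both the non-intersection of leading segments (Lemma \ref{l6.4}) and the $H^2$ bound, and you never construct the immersion over the zone nor verify the $C^1$/isometric matching there. The paper sidesteps all of this by modifying the segment field first (replacing only the segments over $L_j\setminus K_i$ by segments parallel to $N_i$, which by construction stay between the two bounding parallel segments at $\Gamma(a_j)$ and $\Gamma(b_j)$), then defining the new leading curve as the integral curve of the modified direction field, relating the two parametrisations by the map $\zeta_\eta$, and correcting the curvature by the factor in (\ref{6.139a}); this correction is what gives ${\rm II}_\eta={\rm II}$ \emph{exactly} off the modified set. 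In your construction the claim ``${\rm II}_{u^{(1)}}={\rm II}_v$ identically on $E_{q_i}$'' is not actually exact, because the straightened curve sits on a shifted family of lines, so the second fundamental form at a given physical point involves $\kappa$ evaluated at a perturbed parameter; it is close, but the exact-preservation argument you lean on needs the paper's curvature adjustment (or an equivalent estimate).

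There is a second gap in the ``moreover'' part. Smoothing $\kappa$ only makes ${\rm II}_\delta$ bounded on compact subsets of $\omega$: by (\ref{147}) the factor $(1-s\kappa_\Gamma)^{-1}$ may degenerate as leading segments approach $\partial\omega$, and Lemma \ref{l6.4} controls $\kappa_\Gamma$ only by ${\rm dist}^{-1}(\cdot,\partial\omega)$. The paper obtains ${\rm II}_\delta\in L^\infty(\omega)$ by an additional dilation argument, replacing $u_\delta$ by $u_{\delta,\eta}(x')=(1-\eta)^{-1}u_\delta\bigl((1-\eta)x'\bigr)$ (here convexity of $\omega$ is used) and checking that this changes the energy by $O(\eta)$; your proposal asserts the global $L^\infty$ bound without any such step.
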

\begin{remark}
The conditions that the curvature $\kappa$ is smooth and ${\rm II}_\delta\in L^\infty(\omega)$ is required for the construction of a recovery sequence. In particular, we take advantage of these facts when using the Taylor expansion formula for $W$, see (\ref{WTaylor}).
\end{remark}
\begin{proof}
In view of Theorem \ref{th6.5} we may assume without loss of generality that $u$ has a finite number of bodies and arms from the outset. Let us consider an arm $\omega[\Gamma]$ with the corresponding leading curve $\Gamma(t)$, $t\in [0,l]$, and assume that the subset $K= K(\Gamma)$ of $[0,l]$ on which the directions of $T=T(t)$ are rational has positive measure $\lambda_1(K)>0$. Since the set of rational directions is countable, we can order its elements in such a way that $\lambda_1(K_1)\geq \lambda_1(K_2)\geq \ldots$, where $K_i$ is the preimage of a particular rational direction 
$T_i,$ $i=1, 2, 3, ...,$ so that $K=\cup_i K_i$. We take an arbitrary $\eta>0$ and let $m=m(\eta)$ be such that for $ K_\eta:=\cup_{i=1}^m K_i$ we have $\lambda_1(K\setminus  K_\eta)\leq \eta$. Note that all $K_i$ are closed, in particular, the distance between any two of these sets is positive.
 This implies that we can remove a finite 
number of open intervals $\tilde L_i \subset [0,l]\setminus K_\eta,\,i=1\ldots,k,$ so that the remaining set $L_\eta:=[0,l]\setminus (\cup_{i=1}^k \tilde L_i)$ (we may also remove the points $0$ and $l$ from $L_\eta$ if they are isolated) possesses the following properties: 
\begin{enumerate}
  \item The estimate $\lambda_1(L_\eta\setminus K_\eta)<\eta$ holds.
  \item The set $L_\eta$ is the union of a finite number of mutually disjoint closed intervals $L_j$, $j=1,\ldots,n.$
  \item For each $j$ one has $L_j\cap  K_\eta = L_j \cap K_i$ for some $i\in\{1, 2, ..., k\}$ and both ends of the interval $L_j$ belong to $K_i$. 
\end{enumerate}

Let us consider a particular interval $L_j=[a_j,b_j]$. The part of $\omega[\Gamma]$ covered by the restriction of $\Gamma$ to $L_j$, which we denote by $\omega[\Gamma|_{L_j}]$, is contained between two parallel segments $\Gamma(a_j) + s N_i$ and $\Gamma(b_j) + s N_i$ corresponding to the rational direction $T_i$. In fact, loosely speaking, $\omega[\Gamma|_{L_j}]$ is mostly covered by the leading segments which are parallel to $N_i$. On the rest of the set, corresponding to the restriction $\Gamma|_{L_j\setminus K_i}$, where the leading segments are not parallel to $N_i$, we replace them with new leading segments that are parallel to $N_i$. Notice that the difference between $N(t)$ and $N_i$ on the set $L_j\setminus K_i$ is not greater than $C \eta$, where $C$ is the Lipschitz constant of $N(t)$. We can write 
\[
\omega[\Gamma|_{L_j}]=\{\Gamma(t)+sN_i,\, s\in{\mathbb R} \, t\in L_j\}\cap\omega.
\]
We carry out this procedure for each interval $L_j$. In this way we change the leading segments covering $\omega[\Gamma]$ only on a small set $\omega_\eta[\Gamma]$. We estimate the measure of this set as follows: 
 \begin{equation}\label{135}
\lambda_2\bigl(\omega_\eta[\Gamma]\bigr) \leq \lambda_1(L_\eta\setminus K_\eta) {\rm diam}(\omega) \leq \eta\, {\rm diam}(\omega).
   \end{equation} 
Let us denote by $N_\eta$ and $T_\eta$ the vector fields corresponding to the new set of leading segments. We have 
 \begin{equation}\label{6.135a}
 \begin{gathered}
 N_\eta = N,\, T_\eta = T \mbox{ in } \omega[\Gamma]\setminus\omega_\eta[\Gamma],
\\
\|N_\eta - N\|_{L^\infty(\omega_\eta[\Gamma])} \to 0,\,\|T_\eta - T\|_{L^\infty(\omega_\eta[\Gamma])}\to 0 \mbox{ as } \eta\to 0.
 \end{gathered}
 \end{equation}
We denote by $S_\eta^\Gamma$ the union of all rational leading segments that do not belong to the set $\bigcup_{j=1}^n \omega[\Gamma|_{L_j}]$  and estimate its measure as follows: 
\begin{equation}\label{137}
\lambda_2\bigl(S_\eta^\Gamma\bigr) \leq \lambda_1(K\setminus  K_\eta)  {\rm diam}(\omega)\leq \eta\, {\rm diam}(\omega).
\end{equation}
 
 For each arm we build a family of leading curves $\Gamma_\eta$ and isometric surfaces $\mathcal{U}_\eta$, which will be shown to converge to the surface $u$ uniformly as $\eta\to 0$. We assemble the approximating parts $\mathcal{U}_\eta$ on all arms and bodies in a continuous fashion. We then choose a sufficiently small value of $\eta$ so that the resulting surface denoted by $u_\delta$ satisfies the inequality (\ref{134}).
 
 We define $\Gamma_\eta$ as the maximal solution in $\omega[\Gamma]$ to the following ODE:
 \begin{equation}\label{6.135}
  \Gamma_\eta'(t) = T_\eta\bigl(\Gamma_\eta(t)\bigr),\quad \Gamma_\eta(0)=\Gamma(0),
 \end{equation}
 where $T_\eta$ is the field of unit vectors orthogonal to the modified set of leading segments of $\omega[\Gamma]$. Notice that $\Gamma_\eta$ uniformly tends to $\Gamma$ as $\eta\to0.$ In particular, for small enough values of $\eta,$ the new curve will cover the whole $\omega[\Gamma]$, {\it i.e.} one has $\omega[\Gamma] = \omega[\Gamma_\eta]$.  We denote the domain of $\Gamma_\eta$ by $[0,l_\eta]$, noting that in general $l_\eta\neq l$. Both $T$ an $T_\eta$ are Lipschitz continuous on any compact subset of $\omega$, the measure of the set $\omega_\eta[\Gamma]$ on which $T \neq T_\eta$ is proportional to $\eta$. This implies that on $\omega[\Gamma_\eta]$ one has a point-wise estimate $|T(t) - T_\eta(t)|\leq C \eta$, where $C$ is proportional to the Lipschitz constant. It follows that the solution of (\ref{6.135}) converges to $\Gamma$ uniformly as $\eta\to 0$. To each leading segment of $\omega[\Gamma]$ correspond unique $t\in [0,l]$ and $\hat t\in [0,l_\eta]$ such that $\Gamma(t)$ and $\Gamma_\eta(\hat t)$ belong to this segment. Let us denote by $\zeta_\eta$ the corresponding map $\zeta_\eta: t\mapsto \hat t$. It is not difficult to see that $\zeta_\eta$ is differentiable with $\zeta_\eta'$ converging to $1$ as $\eta \to 0$. A direct calculation yields the following expression for the directional derivative of $T(x')$ in the direction of $T(x')$:
 \begin{equation*}
  (\nabla' T)\, T= \frac{\kappa_\Gamma(t)}{1-s \kappa_\Gamma(t)} N(t),
 \end{equation*}
where $(t,s)$ is such that $x' = \Gamma(t) + sN(t)$. In particular,  on the set $\o[\Gamma]\setminus \o_\eta[\Gamma]$ we have
\begin{equation}\label{6.137}
\begin{gathered}
  \kappa_{\Gamma_\eta}\circ\zeta_\eta(t) =  \frac{\kappa_\Gamma(t)}{1-\big(\Gamma_\eta\circ\zeta_\eta(t) - \Gamma(t)\big)\cdot N(t)\,\kappa_\Gamma(t)},\quad \kappa_{\Gamma_\eta}\circ\zeta_\eta(t) \to \kappa_\Gamma(t) \mbox{ as }\eta\to 0.
\end{gathered}
\end{equation}

Now we can construct a new surface on $\omega[\Gamma_\eta]$. To this end we will use the equations analogous to (\ref{2.108}) and (\ref{6.136}). First, we need to adjust the non-zero principal curvature of the surface $\kappa$ to compensate the shift of the leading curve $\Gamma_\eta$ along the leading segments, so let
\begin{equation}\label{6.139a}
\kappa_\eta(t):= \frac{\kappa(t)}{1-\big(\Gamma_\eta\circ\zeta_\eta(t) - \Gamma(t)\big)\cdot N(t) \,\kappa_\Gamma(t)} .
\end{equation}
We define the leading curve on the surface by
\begin{equation*}
 \gamma_\eta(\hat t) := \int\limits_0^{\hat t} \tau_\eta, \quad \gamma_\eta(0) = \gamma(0),
\end{equation*}
with a unique Darboux frame satisfying the following system of equations:
\begin{equation}\label{6.139}
\left(\begin{array}{c}
\tau'_\eta(\hat t)
\\
\nu'_\eta(\hat t)
\\
n'_\eta(\hat t)
\end{array}\right)= \left(\begin{array}{ccc}
0 & \kappa_{\Gamma_\eta}(\hat t)\ \ \  & \kappa_\eta\circ\zeta_\eta^{-1}(\hat t)
\\
-\kappa_{\Gamma_\eta}(\hat t) & 0 & 0
\\
-\kappa_\eta\circ\zeta_\eta^{-1}(\hat t)\ \ \ & 0 & 0
\end{array}\right)\left(\begin{array}{c}
\tau_\eta(\hat t)
\\
\nu_\eta(\hat t)
\\
n_\eta(\hat t)
\end{array}\right),
\end{equation}
with the initial conditions $\bigl(\tau_\eta(0),\nu_\eta(0),n_\eta(0)\bigr)=\bigl(\tau(0),\nu(0),n(0)\bigr)$, where $\zeta_\eta^{-1}$ is the inverse of $\zeta_\eta$. Then 
the new surface $\mathcal U_\eta$ is defined by
\begin{equation*}
\mathcal U_\eta(\hat t, s)=\gamma_\eta(\hat t)+s\nu_\eta(\hat t),
\end{equation*}
for suitable values of $\hat t$ and $s$. Substituting $\hat t = \zeta_\eta(t)$ we see that $\bigl(\tau_\eta\circ\zeta_\eta(t),\nu_\eta\circ\zeta_\eta(t),n_\eta\circ\zeta_\eta(t)\bigr)$ is the solution to the system
\begin{equation}\label{6.141}
\frac{d}{dt}\left(\begin{array}{c}
\tau_\eta\circ\zeta_\eta(t)
\\
\nu_\eta\circ\zeta_\eta(t)
\\
n_\eta\circ\zeta_\eta(t)
\end{array}\right)= \zeta_\eta'(t)\left(\begin{array}{ccc}
0 & \kappa_{\Gamma_\eta}\circ\zeta_\eta(t)\ \ \  & \kappa_\eta(t)
\\
-\kappa_{\Gamma_\eta}\circ\zeta_\eta(t)\ \ \  & 0 & 0
\\
-\kappa_\eta(t) & 0 & 0
\end{array}\right)\left(\begin{array}{c}
\tau_\eta\circ\zeta_\eta(t)
\\
\nu_\eta\circ\zeta_\eta(t)
\\
n_\eta\circ\zeta_\eta(t)
\end{array}\right),
\end{equation}
with the initial conditions $\bigl(\tau_\eta\circ\zeta_\eta(0),\nu_\eta\circ\zeta_\eta(0),n_\eta\circ\zeta_\eta(0)\bigr)=\bigl(\tau(0),\nu(0),n(0)\bigr)$. Due to (\ref{6.137}) and the properties of $\zeta_\eta$ the coefficients on the right hand side of (\ref{6.141}) converge to the coefficients of system (\ref{2.108}) in $L^\infty$ norm. This implies  
\begin{equation}\label{144}
 \begin{gathered}
\big\|(\tau_\eta\circ\zeta_\eta,\nu_\eta\circ\zeta_\eta,n_\eta\circ\zeta_\eta)-(\tau,\nu,n)\big\|_{L^\infty[0,l]}\to 0,
\\
\|\gamma_\eta\circ\zeta_\eta -\gamma\|_{L^\infty[0,l]}\to 0,
\\
\big\|\mathcal U_\eta-u\big\|_{L^\infty(\omega[\Gamma])}\to 0.
 \end{gathered}
\end{equation}

Analogously we have 
\begin{equation*}
 \begin{gathered}
  \nabla' \mathcal U_\eta = \big(T_{\eta,1} \tau_\eta - T_{\eta,2} \nu_\eta,\,T_{\eta,2} \tau_\eta + T_{\eta,1} \nu_\eta\big),
  \\
  \nabla'^2  \mathcal U_\eta = \frac{\kappa_\eta\circ\zeta^{-1}_\eta}{1-\hat s \kappa_{\Gamma_\eta}}\left(
 \begin{array}{cc}
  T_{\eta,1}^2\, n_\eta & T_{\eta,1} T_{\eta,2}\, n_\eta
  \\
  T_{\eta,1} T_{\eta,2}\, n_\eta &  T_{\eta,2}^2\, n_\eta
 \end{array}
\right),
 \end{gathered}
\end{equation*}
where the terms on the right hand sides are functions of $\hat t\in [0,l_\e]$, {\it cf.} (\ref{6.139}). Let us substitute $\hat t = \zeta_\eta(t)$ and $\hat s = s - (\Gamma_\eta\circ\zeta_\eta(t)- \Gamma(t))\cdot N(t)$ in the above, so that $\Gamma_\eta(\hat t) + \hat s N_\eta(\hat t) = \Gamma(t) +  s N(t)$ everywhere in $\o[\Gamma]\setminus \o_\eta[\Gamma]$. We obtain, in particular, via (\ref{6.137}) and (\ref{6.139a}) that
\begin{equation*}
 \frac{\kappa_\eta\circ\zeta^{-1}_\eta(\hat t)}{1-\hat s \kappa_{\Gamma_\eta}(\hat t)}= 
 \left\{ \begin{array}{cc}
                 \dfrac{\kappa(t)}{1- s \kappa_{\Gamma}(t)} & \mbox{ on }\o[\Gamma]\setminus \o_\eta[\Gamma],
                 \\
                 \kappa(t) & \mbox{ on } \o_\eta[\Gamma].
                                                                                                     \end{array}\right.
\end{equation*}
Then from (\ref{135}), (\ref{6.135a}) and (\ref{144}) we derive the following convergence properties:
\begin{equation}\label{153}
 \begin{gathered}
 \big\|\nabla'\mathcal U_\eta-\nabla' u\big\|_{L^\infty(\omega[\Gamma])}\to 0,
 \\
 \big\|\nabla'^2\mathcal U_\eta-\nabla'^2 u\big\|_{L^2(\omega[\Gamma])}\to 0,
 \\
 \big\|\nabla'^2\mathcal U_\eta-\nabla'^2 u\big\|_{L^\infty(\o[\Gamma]\setminus \o_\eta[\Gamma])}\to 0,
 \\
{\rm II}_\eta={\rm II} \mbox{ on }\o[\Gamma]\setminus \o_\eta[\Gamma], \mbox{ as } \eta \to 0,
 \end{gathered}
\end{equation}
where ${\rm II}_\eta$ is the matrix of the second fundamental form of $\mathcal U_\eta$. 

The estimates (\ref{137}), (\ref{153}) imply the validity of (\ref{134}). Indeed, since the number of bodies and arms of $u$ is finite, we can carry out the above construction for every arm (on bodies the surface remains unchanged), and then assemble an $H^2(\omega)$ isometric surface starting from an arbitrary arm or body and attaching sequentially to a current piece the next one by applying a rigid motion, so that the resulting surface satisfies the properties analogous to (\ref{153}). Then choosing a sufficiently small $\eta$ and denoting the corresponding surface by $u_\delta$ and the union of all $S_\eta^\Gamma$ by $S_\delta$ yields the estimate (\ref{134}). Finally, we split each arm of $u_\delta$ into smaller arms $\omega[\tilde\Gamma_r]$ corresponding to the cylindrical parts with rational directions according to the construction and their complement.

We conclude the proof with an argument for additional regularity of $u_\delta$, which is claimed in the statement. On each arm we can approximate the corresponding curvature $\kappa$ in the $L^2$-norm by an infinitely smooth function. We then reconstruct the pieces of the new surface using the equations (\ref{2.108}), (\ref{6.136}) and $\gamma' = \tau$ with appropriate initial conditions, and attach them together in a fashion similar to the above. Taking sufficiently good approximations of $\kappa$ we ensure the validity of the estimate (\ref{134}) for the new surface. 

The following scaling argument allows us to obtain the boundedness of ${\rm II}_\delta$ in $\omega$. Due to the smoothness of $\kappa$ and (\ref{147}) the matrix of the second fundamental form ${\rm II}_\delta$ is bounded on any compact subset of $\omega$. Without loss of generality we can assume that $\omega$ contains the origin. Since $\omega$ is convex by the assumption, it is a star-shaped domain, {\it i.e.} $\omega_{\eta}:=(1-\eta)\omega\subset \omega$, where we use $\eta$ again to denote a small positive parameter. Let us define the scaled surface: $u_{\delta,\eta}(x'):=(1-\eta)^{-1}u_\delta\bigl((1-\eta)x'\bigr)$, $x'\in\omega$. It is easy to see that $u_{\delta,\eta}$ is an isometric surface (this is why we need to use the coefficient $(1-\eta)^{-1}$ in front of $u_\delta$). In addition, one has ${\rm II}_{\delta,\eta}(x')=(1-\eta){\rm II}_\delta\bigl((1-\eta)x'\bigr)$ and ${\rm II}_{\delta,\eta}\in L^\infty(\omega)$. Therefore, for any quadratic form $Q$, in particular for $Q_{\rm hom}^{\rm sc}$, we obtain
\begin{equation*}
 \int\limits_\omega Q\bigl({\rm II}_{\delta,\eta}(x')\bigr)dx' =(1-\eta)^2 \int\limits_\omega Q\big({\rm II}_\delta((1-\eta)x')\big)dx' =  (1-\eta) \int\limits_{\omega_\eta} Q\bigl({\rm II}_{\delta}(x')\bigr)dx'.
\end{equation*}
Hence, choosing small enough $\eta$ and re-denoting $u_{\delta,\eta}$  by $u_\delta$ we arrive at (\ref{134}).
\end{proof}

\section{Limit functional and $\Gamma$-convergence in the ``supercritical'' regime $h\ll \e^2$}\label{extremeenergy}

As follows from the compactness result of Section \ref{compactness} and the isometry constraint of Section \ref{s5}, in the case when $h\ll \e^2$ we should define the homogenised energy functional as
\begin{equation}\label{63}
 E_{\rm hom}^{\rm sc}(u):= \frac{1}{12}\int\limits_\omega Q_{\rm hom}^{\rm sc}\bigl({\rm II}\bigr) dx',
\end{equation}
with the elastic stored-energy function
\begin{equation}\label{2.103}
 Q_{\rm hom}^{\rm sc}({\rm II}) := \min \int\limits_Y Q_2\bigl(y, {\rm II} + \nabla_y^2 \psi\bigr) dy,\\\  \psi\in H^2_{\rm per}(Y),\\\ \det\bigl({\rm II} +\nabla_y^2 \psi\bigr) = 0.
\end{equation}

Zero determinant condition is rather restrictive in the case of periodic functions. Next we state a theorem describing the structure of periodic functions $\psi$ subject to this condition, first recalling that ${\rm II}$ can be represented in the form (\ref{147}). The coefficient $-\kappa(1-s \kappa_\Gamma)^{-1}$ is one of the two principal curvatures of $u$, the other principal curvature is always zero.

\begin{theorem}\label{th2.8}
Let $\rm II$ be a non-zero matrix of the form (\ref{147}) and let $\psi \in H^2_{\rm per}(Y)$ be such that 
\begin{equation}\label{2.99}
 \det\bigl({\rm II} +\nabla_y^2 \psi\bigr) = 0.
\end{equation}
Then the derivative of $\psi$ in the direction $(-T_2, \,T_1)^\top$ is zero, {\it i.e.} one has
\begin{equation*}
 \psi(y_1,y_2) = \psi_T(T_1 y_1+T_2 y_2)
\end{equation*}
for some $\psi_T$. In particular we have $\psi_T = \psi_T(y_1)$ or $\psi_T=\psi_T(y_2)$ and $\psi_T$ is $1$-periodic for $T=(1,0)^\top$ and $T=(0,1)^\top$ respectively. For other rational $T$ such that $T_1/T_2=p/q\in\mathbb Q$, where $p/q$ is an irreducible fraction, the function $\psi_T$ is periodic with period $P=\left|T_1/p\right|=
\left|T_2/q\right|$. For irrational vectors $T$ the function $\psi_T$ (and, hence the function $\psi$) is constant.
\end{theorem}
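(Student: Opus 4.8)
The plan is to recognise the zero-determinant condition as the statement that a certain scalar function is \emph{developable}, and then to use periodicity to force the ruling direction. Since $\rm II\neq0$ has the form (\ref{147}), we may write $\mathrm{II}=c\,T\otimes T$ with $c\neq0$, and, replacing $(\psi,\mathrm{II})$ by $(-\psi,-\mathrm{II})$ if necessary, we may assume $c>0$. Because $\mathrm{II}=\nabla_y^2\bigl(\tfrac{c}{2}(T\cdot y)^2\bigr)$, the function $\Phi(y):=\tfrac{c}{2}(T\cdot y)^2+\psi(y)$, extended $\mathbb Z^2$-periodically from $Y$ to a function in $W^{2,2}_{\rm loc}({\mathbb R}^2)$, satisfies $\nabla^2\Phi=\mathrm{II}+\nabla_y^2\psi$, so the constraint (\ref{2.99}) reads $\det\nabla^2\Phi=0$ a.e.\ on ${\mathbb R}^2$. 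I would then invoke, citing \cite{Pakzad} and the regularity theory recalled in Section~\ref{isometricimmersions}, that a $W^{2,2}_{\rm loc}$ solution of $\det\nabla^2\Phi=0$ is of class $C^1_{\rm loc}$ and is developable: through every point $y_0$ either $\Phi$ is affine in a neighbourhood, or there passes a maximal segment along which $\nabla\Phi$ is constant, and a maximal segment can terminate only at the boundary of the domain; since the domain is ${\mathbb R}^2$, such a maximal ruling is a whole line $\ell_{y_0}=\{y_0+t\nu:t\in{\mathbb R}\}$ with $\nu$ spanning $\ker\nabla^2\Phi(y_0)$.

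The key point is that every ruling direction $\nu$ must be orthogonal to $T$, i.e.\ parallel to $\xi:=(-T_2,T_1)^\top$. Indeed, along $\ell_{y_0}$ the quantity $\partial_T\Phi=c\,(T\cdot y)+\partial_T\psi$ is constant; since $\Phi\in C^1$, the function $\partial_T\psi=\partial_T\Phi-c\,(T\cdot y)\,\big|$ no, rather $\partial_T\psi$ is continuous and $\mathbb Z^2$-periodic, hence bounded, while $c\neq0$, so $T\cdot y$ must remain bounded on $\ell_{y_0}$, which forces $T\cdot\nu=0$. (On an open set where $\Phi$ is affine, $\nabla\Phi$ is locally constant, so the conclusion of the next sentence holds there automatically, and no separate treatment of that case is needed.) Consequently $\nabla\Phi$ is constant along every line in the direction $\xi$; since $T\cdot y$ is itself constant along such lines, $\nabla\psi=\nabla\Phi-c\,(T\cdot y)\,T$ is also constant along $\xi$-lines, so $\nabla(\partial_\xi\psi)=\partial_\xi(\nabla\psi)=0$ a.e.\ on the connected torus $Y$. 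Hence $\partial_\xi\psi$ is a.e.\ constant, and since $\int_Y\partial_\xi\psi\,dy=0$ (the mean over $Y$ of any partial derivative of a periodic function vanishes), we obtain $\partial_\xi\psi\equiv0$. This is exactly the assertion that $\psi(y)=\psi_T(T_1y_1+T_2y_2)$ for some function $\psi_T$, which inherits from $\psi$ the regularity needed below (in particular continuity, since $H^2(Y)\subset C(\overline Y)$).

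It remains to read off the periodicity of $\psi_T$ from the $\mathbb Z^2$-periodicity of $\psi$. From $\psi(y+k)=\psi(y)$ for all $k\in\mathbb Z^2$ it follows that $\psi_T$ is invariant under translation by every element of the additive group $G:=\{T_1m+T_2n:\,m,n\in\mathbb Z\}$. For $T=(1,0)^\top$ and $T=(0,1)^\top$ one has $G=\mathbb Z$, which yields $\psi_T=\psi_T(y_1)$, resp.\ $\psi_T=\psi_T(y_2)$, each $1$-periodic. If $T$ is rational with $T_1/T_2=p/q$ in lowest terms, write $T_1=\mu p$, $T_2=\mu q$ with $\mu=T_1/p=T_2/q$; since $\gcd(p,q)=1$ we have $\{pm+qn:\,m,n\in\mathbb Z\}=\mathbb Z$, hence $G=\mu\mathbb Z$ and $\psi_T$ is periodic with period $P=|T_1/p|=|T_2/q|$. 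If $T$ is irrational, Kronecker's theorem shows that $G$ is dense in ${\mathbb R}$; a continuous function invariant under a dense group of translations is constant, so $\psi_T$, and therefore $\psi$, is constant.

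The step I expect to be the main obstacle is the appeal to the developable structure in the first paragraph: one must verify that the precise form of the structure theorem for $W^{2,2}$ solutions of $\det\nabla^2\Phi=0$ — both the $C^1_{\rm loc}$-regularity (which is what makes $\partial_T\psi$ bounded) and the statement that maximal rulings reach the boundary, so that on ${\mathbb R}^2$ they are entire lines — is available in exactly the form quoted from \cite{Pakzad}; if it is only stated for bounded convex domains, a version adapted to the periodic (whole-plane) setting has to be arranged first, e.g.\ by exhausting ${\mathbb R}^2$ by large convex sets and passing to the limit in the rulings. The remaining ingredients (the algebra relating $\mathrm{II}$ to $\nabla_y^2\bigl(\tfrac{c}{2}(T\cdot y)^2\bigr)$, the boundedness/mean-zero argument for $\partial_\xi\psi$, and the elementary number theory identifying the period) are routine.
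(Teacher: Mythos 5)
Your proposal is correct and follows essentially the same route as the paper: you introduce the same auxiliary function (the paper's $v(y)=\frac12 y^\top{\rm II}\,y+\psi$, your $\Phi$), invoke Pakzad's developability result for $W^{2,2}$ solutions of $\det\nabla_y^2 v=0$, use boundedness of the periodic $\psi$ (and of $\nabla\psi$) to force the rulings to be parallel to $(-T_2,T_1)^\top$, and then read off the period of $\psi_T$ from $\mathbb Z^2$-periodicity, with Kronecker density in the irrational case. The only differences are cosmetic: the paper applies the developability statement on an arbitrary convex domain rather than extending to ${\mathbb R}^2$, and concludes $\partial_\xi\psi=0$ from boundedness of $\psi$ along a ruling rather than from your constancy-plus-zero-mean argument.
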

\begin{proof}
The proof is based on a result by Pakzad in \cite{Pakzad}. Let us denote 
\begin{equation*}
v(y):= \frac{1}{2}y_1^2 {\rm II}_{11} + y_1y_2 {\rm II}_{12} + \frac{1}{2} y_2^2 {\rm II}_{22} + \psi,
\end{equation*}
so that $\det (\nabla_y^2 v) = 0$ is equivalent to equation (\ref{2.99}). Then $\nabla v$ is continuous and for any regular convex domain $D$ in $\mathbb R^2$ and any $y\in D$ there exist a segment of line passing through $y$ and connecting two points on the boundary of $D$ such that $\nabla_y v = {\rm II} y + \nabla_y \psi$ is constant along this segment. Since the periodicity and continuity (by the Sobolev Embedding Theorem) of $\psi$ imply its boundedness, it follows that the segment along which $\nabla_y v$ is constant has to be parallel to the vector $(-T_2, \,T_1)^\top$: along this direction ${\rm II} y$ is constant. Hence $\nabla_y \psi$ is constant in the direction $(-T_2, \,T_1)^\top$, and the derivative of $\psi$ in this direction has to be zero due to the boundedness of $\psi$. The rest of the statement of the theorem easily follows from the periodicity of $\psi$.
\end{proof}

The above theorem implies that at the points $x'$ where the principal directions of the isometric surface $u$ are irrational we have $Q_{\rm hom}^{\rm sc} ({\rm II})= \int_Y Q_2(y, {\rm II}) dy$, while on the rational directions the corrector $\psi$ is non-trivial and in general one has $Q_{\rm hom}^{\rm sc} ({\rm II})< \int_Y Q_2(y, {\rm II}) dy$. This means that in general $Q_{\rm hom}^{\rm sc} ({\rm II})$ is an everywhere discontinuous function of the direction $T$. We can write $\psi$ in the form
\begin{equation}\label{156}
 \psi(y_1,y_2) = - \frac{\kappa}{1-s \kappa_\Gamma} \psi_T (T_1 y_1+T_2 y_2),
\end{equation}
where $\psi_T(t)$ is the solution of the minimisation problem 
\begin{equation}\label{157}
\min \int\limits_Y Q_2\big(y, \widetilde{\rm II}(1 + \psi_T''(T_1 y_1+T_2 y_2))\big) dy,\\\ \psi_T(t)\in H^2_{\rm per}(0,P),
\end{equation}
with $\widetilde{\rm II}$ defined by (\ref{53}). Notice that the Sobolev embedding theorem implies that $\psi_T(t)$ is continuously differentiable.

\begin{theorem}[$\Gamma$-convergence]
\label{th5.6} 
Suppose that $h\ll \e^2$, {\it i.e.} $h\e^{-2}=o(1)$ as $h\to 0$. Then the rescaled sequence of functionals $h^{-2} E_h$ $\Gamma$-converges to the limit functional $E_{\rm hom}^{\rm sc}$ in the sense that
\begin{enumerate}
 \item {\bf Lower bound.} For every bounded bending energy sequence $u_h\in H^1(\Omega)$ such that $\nabla' u_h$ converges to $\nabla' u$ strongly in $L^2(\Omega)$, $u\in H^2_{\rm iso}(\omega)$, the following inequality holds:
 \begin{equation*}
  \liminf_{h\to 0} h^{-2} E_h(u_h) \geq E_{\rm hom}^{\rm sc}(u).
 \end{equation*}
\item {\bf Recovery of the lower bound.} For every $u\in H^2_{\rm iso}(\omega)$ there exists a sequence $u_h^{\rm rec}\in H^1(\Omega)$ such that $\nabla' u_h^{\rm rec}$ converges to $\nabla' u$ strongly in $L^2(\Omega)$ and
 \begin{equation*}
  \lim_{h\to 0} h^{-2} E_h(u_h^{\rm rec}) = E_{\rm hom}^{\rm sc}(u).
 \end{equation*}
\end{enumerate}
\end{theorem}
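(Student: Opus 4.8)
The plan is to rerun the lower-bound argument of Theorem~\ref{th4.1}. By the Remark following that proof, every step up to and including the first two inequalities of (\ref{commonest}) is valid under the sole assumption $h\ll\varepsilon$; hence, passing to a subsequence along which $\nabla'^2\widetilde u_h\twconv\nabla'^2 u+\nabla_y^2\varphi$ in $L^2(\omega'\times Y)$ and writing $\psi:=-\varphi\cdot n$, one obtains
\begin{equation*}
\liminf_{h\to0}h^{-2}E_h(u_h)\ \geq\ \frac1{12}\int_{\omega'}\int_Y Q_2\bigl(y,{\rm II}+\nabla_y^2\psi\bigr)\,dy\,dx'.
\end{equation*}
The only additional ingredient for the supercritical range is Theorem~\ref{isometryconstraint}, which, since $h\ll\varepsilon^2$, forces $\det\bigl({\rm II}(x')+\nabla_y^2\psi(x',\cdot)\bigr)=0$ for a.e.\ $x'\in\omega'$; thus $\psi(x',\cdot)$ is admissible in the constrained minimisation (\ref{2.103}), whence $\int_Y Q_2(y,{\rm II}+\nabla_y^2\psi)\,dy\geq Q^{\rm sc}_{\rm hom}({\rm II})$ pointwise in $x'$. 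Therefore $\liminf_{h\to0}h^{-2}E_h(u_h)\geq\frac1{12}\int_{\omega'}Q^{\rm sc}_{\rm hom}({\rm II})\,dx'$, and letting $\omega'\uparrow\omega$ gives the claim; the reduction from subsequences to the full sequence is the contradiction argument already used for (\ref{4.83}).

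\textbf{Recovery sequence: reduction to a regular surface.} For the recovery part, the first step is to replace $u$ by a surface on which the optimal corrector can be written down explicitly. Given $\eta>0$, Theorems~\ref{th6.5} and~\ref{th6.6} provide $u_\eta\in H^2_{\rm iso}(\omega)$ with finitely many bodies and arms, with smooth curvature on each arm and ${\rm II}_\eta\in L^\infty(\omega)$, whose set of rational leading segments is a finite union of cylindrical arms $\omega[\tilde\Gamma_r]$ (each consisting of parallel rational segments in a fixed direction $T_r$) together with a set $S_\eta$ of measure $<\eta$, and such that $\|u-u_\eta\|_{H^2(\omega)}+|E^{\rm sc}_{\rm hom}(u)-E^{\rm sc}_{\rm hom}(u_\eta)|<\eta$. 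It is then enough to build, for each such $u_\eta$, a sequence $u_h\in H^1(\Omega)$ with $\nabla'u_h\to\nabla'u_\eta$ strongly in $L^2(\Omega)$ and $h^{-2}E_h(u_h)\to E^{\rm sc}_{\rm hom}(u_\eta)$; a diagonal choice $h\mapsto\eta(h)\to0$, exploiting $E^{\rm sc}_{\rm hom}(u_\eta)\to E^{\rm sc}_{\rm hom}(u)$, then produces the required $u^{\rm rec}_h$.

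\textbf{Recovery sequence: local constructions and assembly.} On bodies, on irrational arms, and on $S_\eta$, Theorem~\ref{th2.8} shows the optimal corrector in (\ref{2.103}) is constant, so $Q^{\rm sc}_{\rm hom}({\rm II})=\int_Y Q_2(y,{\rm II})\,dy$ there, and one uses the classical Friesecke--James--M\"uller ansatz $u_h=u_\eta+hx_3 n+\tfrac12 h^2 x_3^2 R d_\varepsilon$, with $R=(\nabla'u_\eta\vert n)$ and $d(x',y)$ chosen smooth so that $Q_2(y,{\rm II})=Q_3(y,({\rm II}\vert d))$; then $G_h:=h^{-1}(R_h^\top\nabla_h u_h-{\mathcal I}_3)$ is uniformly bounded and $G_h\stto x_3({\rm II}\vert d)$, giving local energy $\frac1{12}\int_Y Q_2(y,{\rm II})\,dy$. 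The excess incurred on $S_\eta$ by keeping the trivial corrector is at most $C\|{\rm II}_\eta\|^2_{L^\infty}\lambda_2(S_\eta)\to0$, which is precisely where ${\rm II}_\eta\in L^\infty$ is used. On a cylindrical rational arm $\omega[\tilde\Gamma_r]$ the optimal corrector is cylindrical, of the form (\ref{156})--(\ref{157}) (and, after mollifying $\psi_{T_r}$, smooth); since a cylinder ruled in a fixed direction is \emph{exactly} isometric to a flat strip regardless of its curvature, I would take the recovery mid-surface on this arm to be the cylinder with the same ruling direction and with oscillating principal curvature $\kappa(t)\bigl(1+\psi_{T_r}''(T_r\!\cdot\! x'/\varepsilon)\bigr)$, whose second gradient two-scale converges to ${\rm II}+\nabla_y^2\psi$; adding the transverse terms $hx_3\widetilde n+\tfrac12 h^2 x_3^2 R d_\varepsilon$ (with $\widetilde n$ the normal of this cylinder) as above gives a uniformly bounded $G_h$ with $G_h\stto x_3({\rm II}+\nabla_y^2\psi\,\vert\,d)$, hence, by (\ref{WTaylor}), local energy $\frac1{12}\int_Y Q_3(y,({\rm II}+\nabla_y^2\psi\vert d))\,dy=\frac1{12}Q^{\rm sc}_{\rm hom}({\rm II})$. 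Because only finitely many interfaces (all straight leading segments) are involved, the local pieces are assembled into a single $H^1(\Omega)$ map by sequential attachment via rigid motions and interpolation in vanishingly thin transition layers; using that $\psi_{T_r}$ and $\psi'_{T_r}$ have zero mean over a period, the oscillating cylinders can be matched to $u_\eta$ to first order at the seams, so that $\nabla_h u_h$ stays within $O(h)$ of $\SO{3}$ and the energy of the transition layers vanishes. Summing over pieces yields $h^{-2}E_h(u_h)\to E^{\rm sc}_{\rm hom}(u_\eta)$ and $\nabla'u_h\to\nabla'u_\eta$ in $L^2$ by inspection.

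\textbf{Main obstacle.} The hard part is the construction on the rational arms. The naive corrector of (\ref{4.84}) introduces a mid-surface metric defect of order exactly $\varepsilon^2$ (see (\ref{4.87})), which after division by $h$ does not vanish once $h\ll\varepsilon^2$; so one must realise the microscale corrector by a deformation that is isometric on the $\varepsilon$-cell up to a genuinely negligible error. This is exactly where the zero-determinant condition (\ref{2.99})---equivalently, the one-dimensional (cylindrical) structure of $\psi$ furnished by Theorem~\ref{th2.8}---becomes indispensable: it guarantees that the corrector profile corresponds to a bona fide developable bending, which can be prescribed exactly as an oscillating-curvature cylinder. The remaining issues---smoothing $\kappa$ and $\psi_{T_r}$ so that the Taylor expansion (\ref{WTaylor}) applies, controlling the small set $S_\eta$, and the $C^0$ assembly across the finitely many interfaces---are technical and are handled using the estimates and the structural description of isometric immersions gathered in Section~\ref{isometricimmersions}.
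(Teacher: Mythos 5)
Your lower bound is exactly the paper's argument (re-use of the moderate-regime derivation up to (\ref{commonest}) plus Theorem \ref{isometryconstraint} to make $\psi$ admissible in (\ref{2.103})), and the core of your recovery construction is also the paper's: regularise $u$ via Theorem \ref{th6.6}, keep the trivial corrector on bodies, irrational arms and the residual set $S_\delta$ (controlling the excess there by $\|{\rm II}_\delta\|_{L^\infty}$ and $\lambda_2(S_\delta)$), and on each rational arm realise the corrector \emph{exactly isometrically} as a cylinder with oscillating principal curvature $\kappa\bigl(1+\psi_T''\bigr)$ -- this is precisely the corrected tangent (\ref{163}) and the ansatz (\ref{5.120}). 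Up to the omitted phase alignment $t_\e^*$ (needed only to identify the two-scale limit with $\psi_T(T\cdot y)$), that part is sound.

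The genuine gap is your assembly step: ``sequential attachment via rigid motions \emph{and interpolation in vanishingly thin transition layers}'', with the pieces matched only ``to first order'' at the seams. In the supercritical regime this cannot work. If after the rigid motion there remains a value mismatch of order $\e^2$ (which is what zero-mean arguments for $\psi_T,\psi_T'$ give you) or a gradient mismatch of order $\e$ along a seam, then interpolating it across a layer of width $w$ costs rescaled energy at least of order $h^{-2}\min_w\bigl(w\,\e^2+\e^4/w\bigr)\sim h^{-2}\,\e^2\min(w,\e^2/w)^{1/1}$, and more precisely one needs simultaneously $w\ll h^2/\e^2$ (for the $O(\e)$ gradient defect) and $w\gg \e^4/h^2$ (for the $O(\e^2)$ value defect); since $h\ll\e^2$ these two windows are incompatible ($\e^4/h^2\to\infty$), so \emph{no} choice of layer width makes the seam energy $o(h^2)$ -- this is the same obstruction that kills the naive ansatz (\ref{4.84})/(\ref{4.87}) in this regime, now reappearing at the interfaces. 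Worse, if the normal $n_\e$ is allowed to jump across a seam, the term $hx_3 n_\e$ makes the candidate deformation fail to be $H^1(\Omega)$ at all, so the layer is not a cosmetic fix. The paper avoids the problem entirely by exploiting that every seam is a straight leading segment along which the deformation gradient is \emph{constant} on both sides (both pieces are ruled there): a rigid motion consisting of a rotation about the ruling direction $\nu$ (mapping $\tau$ to $\tau_\e$) plus a translation matches values \emph{and} gradients, hence also the normals, identically along the seam. The assembled mid-surface is then an exact $H^2$ isometric immersion, $C^1$ across the finitely many seams, and no transition layers or interpolation are needed. Your proof needs this exact-matching observation (or an equivalent device) to close; with it, the rest of your outline coincides with the paper's proof.
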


\begin{proof} {\bf Lower bound.} The proof follows the derivation of the lower bound in the case when $\varepsilon\gg h\gg\varepsilon^2,$ up to the last inequality in 
(\ref{commonest}), whose right-hand side can have $Q_{\rm hom}^{\rm m}$ replaced by $Q_{\rm hom}^{\rm sc}$ given by (\ref{2.103}), thanks to Theorem \ref{isometryconstraint}.

{\bf Recovery sequence.} A natural method of approximating the deformation of an elastic plate is to first write an expression $u=u(x')$ for the mid-surface of the deformed plate, which takes into account the microscopic oscillations in the definition of the homogenised functional 
(represented by the function $\psi$), and then ``add thickness'' to this surface. One possible approach to deriving the mid-surface 
expression is via an asymptotic expansion, with $u(x')$ as the leading order term followed by a series of correctors in sequential powers 
of $\e$. In the case $h\ll \e^2$, the first corrector involving the function $\psi$ is shown to satisfy equation (\ref{2.99}), see 
Section \ref{asympsection}, which is the solvability condition for the equation at the next order in $\varepsilon.$ 
However, higher-order equations all involve two unknown functions and their solution is not straightforward. 
As was already mentioned in the previous section, in our construction we adopt a different approach based on the properties of isometric immersion, which we describe next. The details of the formal asymptotic argument are given in Section \ref{asympsection}.

As was shown in Theorem \ref{th2.8}, the corrector $\psi$ is constant whenever $T$ is irrational. This implies that if the measure of the set on which $T$ is rational is zero, then the limit elastic energy is simply
\begin{equation*}
\frac{1}{12}\int\limits_{\omega} Q_{\rm hom}^{\rm sc}({\rm II}) dx' = \frac{1}{12}\int\limits_{\omega\times Y} Q_2(y,{\rm II}) dy dx'.
\end{equation*}
Thus, the only case that remains to be considered is when the above mentioned set has non-zero measure. 

Let $u\in H^2_{\rm iso}$. By Theorem \ref{th6.6} we can choose a sequence of regularised surfaces $u_{\delta_j}$, $\delta_j\to 0$, such that 
 \begin{equation}\label{162a}
 \begin{gathered}
 \bigl|E_{\rm hom}^{\rm sc}(u) -  E_{\rm hom}^{\rm sc}(u_{\delta_j})\bigr|\leq \frac{1}{j},
 \\
 \int\limits_{S_{\delta_j}}\int\limits_Y Q_2(y, {\rm II_{\delta_j}}) dx'dy \leq \frac{1}{j}.
 \end{gathered}
\end{equation}
We first construct an approximating sequence of plate deformations for $u_{\delta_j}$ for each $j$ and then apply a diagonalisation procedure to obtain a recovery sequence for $u$. Notice that all objects below depend on the parameter $\delta_j$, however we drop it from the notation in most of the cases. Let us consider an arm $\o[\Gamma_r]$ of $u_{\delta_j}$ corresponding to a rational direction $T$. On this arm the leading curve $\Gamma_r$ is a segment of a straight line, $T$ is constant, $\kappa_{\Gamma_r}=0$, the principal curvature is $-\kappa(t)$ (see (\ref{147})), and the vector $\nu$ of the Darboux frame $(\tau,\nu,n)$ of $u_{\delta_j}\circ \Gamma_r$ is constant. Let $\psi = -\kappa(t)\psi_T(T_1 y_1 + T_2 y_2)$ be the minimiser of (\ref{2.103}), {\it cf.} also (\ref{156}). We have
\begin{equation*}
\begin{gathered}
  {\rm II} = -\kappa\left(
 \begin{array}{cc}
  T_1^2 & T_1 T_2 
  \\
  T_1 T_2  &  T_2^2 	
 \end{array}
\right),
\\
Q_{\rm hom}^{\rm sc}({\rm II}) = \min \int\limits_Y Q_2\bigl(y, {\rm II} - \kappa\nabla_y^2 \psi_T(T_1 y_1 + T_2 y_2)\bigr) dy\qquad\qquad\qquad
\\
\qquad\qquad\qquad=\min \int\limits_Y Q_2\big(y, {\rm II}(1 + \psi_T''(T_1 y_1+T_2 y_2))\big) dy.
\end{gathered}
\end{equation*}

 Let $\psi_{T,j}$ be infinitely smooth functions approximating $\psi_T$ in $L^2(0,l)$. We will make a more precise choice of these functions later. In order to incorporate the corrector $\psi_{T,j}$ in the equation of the approximating surface we need to understand the relation between the coordinates $(t,s)$, $x'$ and the underlying  $\e Y$-periodic lattice. From $x'=\Gamma(t)+sN=\Gamma(0)+tT+sN$ we derive $t=T\cdot (x'-\Gamma(0))$. Applying the unfolding operator to $\psi_{T,j}(\e^{-1}t) =\psi_{T,j}(\e^{-1} T\cdot (x'-\Gamma(0)))$ we get 
  \begin{equation*}
 \mathcal{T}_\e (\psi_{T,j}(\e^{-1}t))=\psi_{T,j}(T\cdot [\e^{-1}x'-\e^{-1}\Gamma(0)] + T\cdot y)
 \end{equation*}
 with $y=\{\e^{-1}x'-\e^{-1}\Gamma(0)\}$, where $[\cdot]$ and $\{\cdot\}$ denote the integer and the fractional parts. Hence in order to align the argument $\e^{-1}t$ with the lattice we need to shift it by adding $t_\e^* = T\cdot \{\e^{-1}\Gamma(0)\}$: 
 \begin{equation*}
  \mathcal{T}_\e (\psi_{T,j}(\e^{-1}t+t_\e^*))=\psi_{T,j}(T\cdot ([\e^{-1}x']-[\e^{-1}\Gamma(0)]) + T\cdot y)=\psi_{T,j}(T\cdot y)
 \end{equation*}
 with $y=\{\e^{-1}x'\}$. On the domain $[0,l]$ of $\Gamma_r$ we define a corrected tangent vector
\begin{equation}\label{163}
 \tau_\e(t):= \tau(t)\sqrt{1-(\e\kappa(t)\psi_{T,j}'(\e^{-1}t+t_\e^*))^2} + \e \kappa(t) \psi_{T,j}'(\e^{-1}t+t_\e^*)n(t),
\end{equation}
so that $\bigl\vert \tau_\varepsilon(t)\vert=1$. For small values of $\e,$ the square root in the above is well defined. The approximating leading curve $\gamma_\e$ is given by integration of $\tau_\e$, namely
\begin{equation*}
\gamma_\e(t):= \int\limits_{0}^t \tau_\e,\ \ \ \gamma_\e(0) = \gamma(0).
\end{equation*}
Now we set
\begin{equation*}
 u_\e\bigl(\Gamma(t)+sN\bigr):= \gamma_\e(t)+s\nu,\ \ {\rm whenever}\ \ \Gamma(t)+sN\in{\omega[\Gamma_r]}.
\end{equation*}

Let us estimate the difference between $u$ and $u_\e$. First, it is clear that 
\begin{equation}\label{165}
\begin{gathered}
  \tau_\e=\tau + O(\e), \,\,\,  n_\e=n + O(\e),\,\,\,\gamma_\e=\gamma + O(\e), \mbox{ as } \e\to 0,
\end{gathered}
\end{equation}
where $n_\e:=\tau_\e\wedge \nu$. Here and until the end of this section $o(\e),\,O(\e)$ are understood in terms of the $L^\infty$-norm. Hence
\begin{equation*}
 \|u_\e-u_{\delta_j}\|_{L^\infty(\o[\Gamma_r])}\to 0, \mbox{ as } \e\to 0.
\end{equation*}
Second, we have ({\it cf.} (\ref{145}))
\begin{equation*}
 \begin{gathered}
  \nabla' u_\e = \big(T_1 \tau_\e - T_2 \nu\vert\,T_2 \tau_\e + T_1 \nu\big).
 \end{gathered}
\end{equation*}
It follows immediately that 
\begin{equation}\label{169}
\begin{gathered}
\|\nabla' u_\e-\nabla' u_{\delta_j}\|_{L^\infty(\o[\Gamma_r])}\to 0.
\end{gathered}
\end{equation}

We carry out this procedure for all rational arms $\o[\Gamma_r]$ leaving the surface unchanged on the rest of $\o$. We then assemble a new surface denoting it by $u_{\e}$ in the same way it was done in the proof of Theorem \ref{th6.6} using rigid motions. It is clear from the construction that 
\begin{equation}\label{174a}
 u_{\e} \to u_{\delta_j}\mbox{ in } W^{1,\infty}(\o)\mbox{ and in }H^1(\omega).
\end{equation}

Now we ``add thickness'' to the plate and define the approximating deformation as
\begin{equation}\label{5.120}
 u_{\delta_j,h}:= u_\e(x') + h x_3 n_\e(x') + h^2 \frac{x_3^2}{2} d_j(x',\e^{-1}x'),
\end{equation}
where $d_j(x',y)$ is some function from $C^\infty(\overline{\o}\times Y)$ periodic with respect to $y$, which will be chosen later and the normal $n_\e$ is defined by $n_\e=  \partial_1 u_\e\wedge \partial_2 u_\e$ on non-rational arms and bodies.
The gradient of  $u_{\delta_j,h}$ is given by
\begin{equation*}
 \nabla_h u_{\delta_j,h}= (\nabla' u_\e \vert \,n_\e)  + h x_3(\nabla' n_\e\vert\, d_j)+  h^2 \frac{x_3^2}{2} (\nabla' d_j+ \e^{-1} \nabla'_y d_j\vert\, 0).
 \end{equation*}
Clearly, $(\nabla' u_\e, \,n_\e)\in \SO 3,$ thus we have
\begin{equation*}
 h^{-1}\Bigl((\nabla' u_\e\vert \,n_\e)^\top \nabla_h u_{\delta_j,h}-{\mathcal I}_3\Bigr)=  x_3(\nabla' u_\e\vert \,n_\e)^\top (\nabla' n_\e\vert\, d_j)+  o(\e).
 \end{equation*}
On each arm $\o(\Gamma)$ which does not belong to the set of rational arms $\o(\Gamma_r)$ the surface $u_\e$ is a rigid motion of $u$. Such a transformation preserves the matrix of the second fundamental form, {\it i.e.} for the matrix of the second fundamental form of $u_\e$ we have ${\rm II}_\e=(\nabla' u_\e)^\top \nabla' n_\e = {\rm II}$ on $\o(\Gamma)$.

Let us consider a rational arm $\o(\Gamma_r)$. Notice that for $\kappa_\e$, which is given via $\tau_\e' = \kappa_\e n_\e$, after a simple calculation using (\ref{163}) and (\ref{165}) one has
\begin{equation}\label{168}
 \kappa_\e = \kappa+ \kappa\psi_{T,j}''(\e^{-1}t) + O(\e).
\end{equation} Since $n_\e'=-\kappa_\e \tau_\e$ we have
\begin{equation*}
\partial_i n_\e= -\kappa_\e \tau_\e T_i,\,\, i=1,2.
\end{equation*}
Then from (\ref{165}), (\ref{169}) and (\ref{168}) we obtain
\begin{equation*}
(\nabla' u_\e)^\top \nabla' n_\e= -(\kappa+\kappa\psi_{T,j}'')\widetilde {\rm II}+ O(\e).
\end{equation*}
We conclude that
\begin{equation}\label{175}
 h^{-1}\bigl((\nabla' u_\e, \,n_\e)^\top \nabla_h u_{\delta_j,h}-{\mathcal I}_3\bigr)= \left\{ \begin{array}{cc}
  x_3\big({\rm II} \vert\, \tilde d_j \big)+  o(\e) & \mbox{ if } x'\in \omega_{ir}\cap S_\delta,
  \\
    x_3\big(-(\kappa+\kappa\psi_{T,j}'')\widetilde {\rm II} \vert\, \tilde d_j \big)+  O(\e) & \mbox{ if } x'\in \omega_r,
 \end{array}
\right.
\end{equation}
where $\tilde d_j:=(\nabla' u , \,n)^\top d_j$, and $\o = \o_r\cup\o_{ir}\cup S_\delta$ is the natural decomposition of $\o$ into the union of rational arms $\o_r=\bigcup_{\Gamma_r} \o[\Gamma_r]$, the residual set of rational segments $S_\delta$ and the rest of the set $\omega_{ir}$. We understand the notation $\psi_{T,j}$ as a family of functions whose members are related to rational arms $\o[\Gamma_r]$ via the corresponding directions $T$.

Now we are ready to define a recovery sequence and prove the convergence of the elastic energy. We use the frame indifference of $W$, its Taylor expansion near 
${\mathcal I}_3$, the unfolding operator and (\ref{175}) to get
\begin{equation*}
\begin{gathered}
 h^{-2}E_h(u_{\delta_j,h})=h^{-2}\int\limits_{\Omega} W(\e^{-1}x', \nabla u_{\delta_j,h}) dx = h^{-2}\int\limits_{\Omega} W\bigl(\e^{-1}x',(\nabla' u_\e\vert \,n_\e)^\top\nabla_h u_{\delta_j,h}\bigr) dx =
 \\
 =\frac{1}{12} \int\limits_{ \omega_{ir}\cap S_\delta}\int\limits_Y Q_3\bigl(y, \mathcal T_\e({\rm II}\vert\, \tilde d_j)\bigr) dx'dy + \frac{1}{12} \int\limits_{\omega_r}\int\limits_Y Q_3\bigl(y, \mathcal T_\e(-(\kappa+\kappa\psi_{T,j}'')\widetilde {\rm II}\vert\,\tilde d_j)\bigr) dx'dy+ \int\limits_{\Omega} O(\e)dx
 \\
 +\frac{1}{12} \int\limits_{\Lambda_\e}Q_3\big(\e^{-1}x', h^{-1}((\nabla' u_\e, \,n_\e)^\top \nabla_h u_{\delta_j,h}-{\mathcal I}_3)\big) dx'.
 \end{gathered}
\end{equation*}
Notice that we can use the Taylor expansion of $W$ due to the fact that ${\rm II} \in L^\infty(\omega)$ by Theorem \ref{th6.6}. Passing to the limit in the above as $h\to 0$ and taking into account the properties of two-scale convergence we arrive at
\begin{equation*}
\begin{gathered}
 \lim_{h\to 0}h^{-2}E_h(u_{\delta_j,h})
 =\frac{1}{12} \int\limits_{ \omega_{ir}\cap S_\delta}\int\limits_Y Q_3\bigl(y, ({\rm II}\vert\, \tilde d_j)\bigr)dx'dy \qquad\qquad\qquad\qquad\qquad\qquad
 \\
 \qquad\qquad\qquad\qquad\qquad\qquad+ \frac{1}{12} \int\limits_{\omega_r}\int\limits_Y Q_3\bigl(y, (-(\kappa+\kappa\psi_{T,j}''(T\cdot y))\widetilde {\rm II}\vert\,\tilde d_j)\bigr) dx'dy.
 \end{gathered}
\end{equation*}

Choosing $\psi_{T,j}$ and $d_j$, {\it i.e.} close enough to the minimisers of the problems (\ref{157}) and (\ref{58}) with appropriate arguments we can make the term 
\begin{equation*}
\begin{gathered}
\frac{1}{12} \int\limits_{ \omega_{ir}}\int\limits_Y Q_3\bigl(y, ({\rm II}\vert\,\tilde d_j)\bigr) dx'dy+ \frac{1}{12} \int\limits_{\omega_r}\int\limits_Y Q_3\bigl(y, (-(\kappa+\kappa\psi_{T,j}''(T\cdot y))\widetilde {\rm II}\vert\,\tilde d_j)\bigr) dx'dy
 \end{gathered}
\end{equation*}
arbitrary close to 
\begin{equation*}
\begin{gathered}
\frac{1}{12} \int\limits_{ \omega_{ir}\cup \omega_r} Q_{\rm hom}^{\rm sc} ({\rm II}) dx'.
 \end{gathered}
\end{equation*}
For the term corresponding to the residual set $S_{\d_j}$ we have, in general, 
\begin{equation*}
 \int\limits_{S_{\delta_j}\times Y} Q_3\bigl(y, ({\rm II}\vert\, \tilde d_j)\bigr) dx'dy>\int\limits_{S_{\delta_j}} Q_{\rm hom}^{\rm sc} ({\rm II}) dx',
\end{equation*}
however thanks to the estimate (\ref{162a}) it can be controlled. Thus, for each $j$ we can choose $\psi_{T,j}$ and $d_j$ so that
\begin{equation*}
\begin{gathered}
 \bigl|\lim_{h\to 0}h^{-2}E_h(u_{\delta_j,h})-E_{\rm hom}^{\rm sc}(u_{\delta_j})\bigr | \leq \frac{2}{j}.
 \end{gathered}
\end{equation*}
The latter, together with (\ref{162a}), implies that we can choose a subsequence $h_j\to 0$ so that 
\begin{equation*}
\begin{gathered}
\bigl |h^{-2}E_h(u_{\delta_j,h_j})-E_{\rm hom}^{\rm sc}(u)\bigr | \leq \frac{4}{j}.
 \end{gathered}
\end{equation*}
Passing to the limit as $j\to \infty$ we get the convergence of energies. Thus
\begin{equation*}
 u_h^{\rm rec} := u_{\delta_j,h_j}
\end{equation*}
is a recovery sequence. The convergence of the gradients follows from (\ref{134}) (\ref{174a}) and the construction procedure.
\end{proof}

\appendix
\section{Appendix: Two-scale convergence and its properties}\label{twoscaleappendix}
 Let $\omega\subset \mathbb R^d$ be a bounded domain and $Y:=[0,1)^d$ (in the present paper $d=2$). We denote by $[x]$ the integer part of $x$, and by $\{x\}$ its fractional part, {\it i.e.} $[x]\in \mathbb Z^d$, $\{x\}:=x-[x] \in Y$. The definition uses the unfolding operator which maps functions of one variable $x$ to functions of two variables $x$ and $y$, where the ``fast'' variable $y$ is responsible for the function behaviour on $\e$-scale: 
\begin{equation*}
 \mathcal T_\e (v)(x,y):= \left\{\begin{array}{ll}
                            v\left(\e \left[\frac{x}{\e}\right] + \e y\right) & \mbox{ if } x\in \e(\xi+ Y) \subset \omega \mbox{ for } \xi \in \mathbb Z^d,
                            \\
                            0 & \mbox{\ otherwise,}
                           \end{array}
 \right. 
\end{equation*}
for a measurable on $\omega$ function $v(x)$. Notice that $\mathcal T_\e (v)(x,y)$ is set to be zero on the cells intersecting with the boundary of $\omega$. This implies, in particular, that if $v\in H^1(\omega)$ then $\mathcal T_\e (v)\in L^2(\omega; H^1(Y))$. One has 
\begin{equation*}
 \int\limits_\omega v dx = \int\limits_{\omega\times Y}\mathcal T_\e (v) dy dx+  \int\limits_{\Lambda_\e} v dx,
\end{equation*}
where $\Lambda_\e:= \{x\in \e(\xi+ Y)\cap\omega: \, \e(\xi+ Y)\cap\partial\omega\neq \varnothing,\,\xi \in \mathbb Z^d\}$.

\begin{definition}
 We say that a bounded in $L^p(\omega)$ sequence $v_\e$ converges weakly (strongly) two-scale to $v\in L^p(\omega\times Y)$ as $\e\to 0$ and denote this by $v_\e \wtto v$ ($v_\e \stto v$) if $\mathcal T_\e (v_\e)$ converges to $v$ weakly (strongly) in $L^p(\omega\times Y)$.
\end{definition}
We refer the reader to \cite{CDG} for a comprehensive analysis of the properties of the unfolding operator $\mathcal T_\e$, which, in particular, links the convergence of sequences in $L^2(\omega)$ to convergence of their unfoldings in $L^2(\omega\times Y)$.

We also need a formal definition of two-scale convergence when not all variables have their fast scale counterparts, which is just a variation of the standard definition.
\begin{definition}
 Using the notation accepted in this paper ({\it i.e.} $\Omega= \omega\times I$, $\omega\subset \mathbb{R}^2$, $Y=[0,1)^2$, $x=(x',x_3)$), we say that a bounded in $L^p(\Omega)$ sequence $v_\e$ converges weakly (strongly) two-scale to $v\in L^p(\Omega\times Y)$ as $\e\to 0$ and denote this by $v_\e \wtto v$ ($v_\e \stto v$) if $\mathcal T_\e (v_\e)$ (where $\mathcal T_\e$ acts only with respect to $x'$) converges to $v$ weakly (strongly) in $L^p(\Omega\times Y)$.
\end{definition}


\section{Appendix: Formal asymptotic expansion for a recovery sequence}
\label{asympsection}

In this appendix we present a formal asymptotic expansion for a recovery sequence which leads to the zero-determinant condition (\ref{2.71}).

Let $u(x')$ be an arbitrary isometric surface, $u\in H^2_{\rm iso}(\omega)$, and $\psi(x',y)$ be a sufficiently smooth scalar function.
As a starting point we consider the surface 
\begin{equation}\label{apr2}
u_\e^{{\rm ap},2}:= u(x')-\e^2 \psi(x',\e^{-1}x') n(x')
\end{equation}
(similar to the case $\e\gg h\gg \e^2$). A simple calculation ({\it cf.} (\ref{4.87})) shows that $(\nabla' \tilde u_\e^{{\rm ap},2})^\top \nabla' \tilde u_\e^{{\rm ap},2} = {\mathcal I}_2+O(\e^2)$ at least formally. Ideally we need to add a sufficient number of corrector terms to the expression in (\ref{apr2}) to make the error to be of order $o(h)$. Let us consider a third order approximation, 
\begin{equation*}
u_\e^{{\rm ap},3}:= u(x')-\e^2 \psi(x',\e^{-1}x') n(x') + \e^3 \varphi(x',\e^{-1}x'),
\end{equation*}
where $\varphi$ is a sufficiently smooth vector-valued function. Taking the gradient we get
\begin{equation*}
\begin{gathered}
 \nabla' u_\e^{{\rm ap},3}= \nabla' u-\e \nabla_y \psi\, n -\e^2\big(\nabla' \psi\, n +  \psi \nabla'n + \nabla_y \varphi\big)  + \e^3 \nabla' \varphi.
\end{gathered}
\end{equation*}
Noticing that 
\begin{equation*}
\begin{gathered}
 \partial_1 n =  {\rm II}_{11} R_1 + {\rm II}_{12} R_2, \qquad  \partial_2 n =  {\rm II}_{12} R_1 + {\rm II}_{22} R_2
\end{gathered}
\end{equation*}
we have 
\begin{equation*}
\begin{gathered}
 |\partial_1 u_\e^{{\rm ap},3}|^2 =  1 + \e^2 \big( (\partial_{y_1} \psi)^2 - 2 \psi\, {\rm II}_{11} + 2 R_1\cdot \partial_{y_1}\varphi \big)  + O(\e^3),
\\
 |\partial_2 u_\e^{{\rm ap},3}|^2 =  1 + \e^2 \big( (\partial_{y_2} \psi)^2 - 2 \psi\, {\rm II}_{22} + 2 R_2\cdot \partial_{y_2}\varphi\big)  + O(\e^3),
\\
 \partial_1 u_\e^{{\rm ap},3}\cdot \partial_2 u_\e^{{\rm ap},3} =  \e^2 \big( \partial_{y_1} \psi \,\partial_{y_2} \psi - 2 \psi\, {\rm II}_{12} +  R_1\cdot \partial_{y_2}\varphi + R_2\cdot \partial_{y_1}\varphi \big)  + O(\e^3).
\end{gathered}
\end{equation*}
Let us equate the second-order terms to zero.
\begin{equation*}
\begin{gathered}
(\partial_{y_1} \psi)^2 - 2 \psi\, {\rm II}_{11} + 2 R_1\cdot \partial_{y_1}\varphi = 0,
\\
(\partial_{y_2} \psi)^2 - 2 \psi\, {\rm II}_{22} + 2 R_2\cdot \partial_{y_2}\varphi=0,
\\
\partial_{y_1} \psi \,\partial_{y_2} \psi - 2 \psi\, {\rm II}_{12} +  R_1\cdot \partial_{y_2}\varphi + R_2\cdot \partial_{y_1}\varphi=0.
\end{gathered}
\end{equation*}
Clearly, the function $\psi$ has to satisfy certain solvability condition. Indeed, differentiating the above equations twice with respect to $y$, we get
\begin{equation*}
\begin{gathered}
 R_1\cdot \partial^3_{y_1 y_2 y_2}\varphi =  - (\partial^2_{y_1 y_2} \psi)^2 - \partial_{y_1} \psi\,\partial^3_{y_1 y_2 y_2} \psi +  \partial^2_{y_2 y_2} \psi\, {\rm II}_{11},
\\
 R_2\cdot \partial^3_{y_1 y_1 y_2}\varphi =  - (\partial^2_{y_1 y_2} \psi)^2 - \partial_{y_2} \psi\,\partial^3_{y_1 y_1 y_2} \psi +  \partial^2_{y_1 y_1} \psi\, {\rm II}_{22},
 \\
\partial^2_{y_1 y_1} \psi \,\partial_{y_2 y_2} \psi + (\partial^2_{y_1 y_2} \psi)^2 + \partial_{y_1} \psi \,\partial^3_{y_1 y_2 y_2} \psi + \partial_{y_2} \psi \,\partial^3_{y_1 y_1 y_2} \psi- 2 \partial^2_{y_1 y_2} \psi\, {\rm II}_{12} \qquad\qquad\qquad
\\
\qquad\qquad\qquad\qquad\qquad\qquad\qquad\qquad +  R_1\cdot \partial^3_{y_1 y_2 y_2}\varphi + R_2\cdot \partial^3_{y_1 y_1 y_2}\varphi=0.
\end{gathered}
\end{equation*}
Eliminating the terms that contain derivatives of $\varphi$ and recalling that $\det {\rm II}=0$ yields 
\begin{equation*}
\begin{gathered}
\det ({\rm II} + \nabla_y^2 \psi) = 0.
\end{gathered}
\end{equation*}

\section*{Acknowledgements}

This work was carried out under the financial support of the Engineering and Physical Sciences Research Council (Grants EP/H028587/1 ``Rigorous derivation of moderate and high-contrast nonlinear composite plate theories'' and EP/L018802/1 ``Mathematical foundations of metamaterials: homogenisation, dissipation and operator theory''). We would like to thank Dr Shane Cooper for helpful remarks.

\end{document}